\numberwithin{equation}{section}
\definecolor{violet}{rgb}{0.580,0.,0.827}
\renewcommand\d{\partial}
\newcommand\dD{\mathrm{d}}
\newcommand\dd{\dD}
\def\eps{\varepsilon }
\newcommand{\Id}{{\rm Id}}
\newcommand{\A}{{\rm \bA}}
\newcommand{\J}{{\rm \bJ}}
\newcommand\br{\begin{remark}}
\newcommand\er{\end{remark}}
\newcommand\bp{\begin{pmatrix}}
\newcommand\ep{\end{pmatrix}}
\newcommand{\be}{\begin{equation}}
\newcommand{\ee}{\end{equation}}
\newcommand\ba{\begin{equation}\begin{aligned}}
\newcommand\ea{\end{aligned}\end{equation}}
\newcommand\ds{\displaystyle}
\newcommand{\beg}{\begin{example}}
\newcommand{\eeg}{\end{exaplem}}
\newcommand{\bpr}{\begin{proposition}}
\newcommand{\epr}{\end{proposition}}
\newcommand{\bt}{\begin{theorem}}
\newcommand{\et}{\end{theorem}}
\newcommand{\bc}{\begin{corollary}}
\newcommand{\ec}{\end{corollary}}
\newcommand{\bl}{\begin{lemma}}
\newcommand{\el}{\end{lemma}}
\newcommand{\bd}{\begin{definition}}
\newcommand{\ed}{\end{definition}}
\newcommand{\brs}{\begin{remarks}}
\newcommand{\ers}{\end{remarks}}
\newtheorem{theorem}{Theorem}[section]
\newtheorem{proposition}[theorem]{Proposition}
\newtheorem{corollary}[theorem]{Corollary}
\newtheorem{lemma}[theorem]{Lemma}
\newtheorem{remark}[theorem]{Remark}
\newtheorem{definition}[theorem]{Definition}
\newtheorem{example}[theorem]{Example}
\newtheorem{notation}[theorem]{Notational convention}
\newcommand{\eg}{\textit{e.g.}}
\newcommand{\ie}{\textit{i.e.}}
\newcommand{\cf}{\textit{cf.} }
\def\mm#1{\begin{align}#1\end{align}}
\def\mmn#1{\begin{align*}#1\end{align*}}
\newcommand{\N}{\mathbf N}
\DeclareMathOperator{\Div}{div}
\newcommand{\RR}{{\mathbb R}}
\newcommand\bx{{\bm x}}
\newcommand{\bz}{\bm z}
\newcommand{\by}{\bm y}
\newcommand{\ber}{\bm e}
\newcommand\bv{{\bm v}}
\newcommand\bA{{\mathbf A}}
\newcommand\bB{{\mathbf B}}
\newcommand\bE{{\mathbf E}}
\newcommand\bF{{\mathbf F}}
\newcommand\bG{{\mathbf G}}
\newcommand\bJ{{\mathbf J}}
\newcommand\bV{{\mathbf V}}
\newcommand\bX{{\mathbf X}}
\newcommand\bY{{\mathbf Y}}
\newcommand\bfa{{\bm a}}
\newcommand\bfb{{\bm b}}
\newcommand\bfv{{\bm v}}
\newcommand\bfw{{\bm w}}
\newcommand\bfx{{\bm x}}
\newcommand\bfy{{\bm y}}
\newcommand\bfz{{\bm z}}
\newcommand\bftau{{\bm \tau}}
\newcommand\cA{{\mathcal A}}
\newcommand\cO{{\mathcal O}}
\newcommand\cX{{\mathcal X}}
\newcommand\tbv{\widetilde{\bfv}}
\newcommand\tbw{\widetilde{\bfw}}
\newcommand\tbz{\widetilde{\bfz}}
\newcommand{\ttime}{\widetilde{t}}
\newcommand{\ttauv}{\widetilde{\bftau}_{\bv}}
\newcommand{\htime}{\widehat{t}}
\newcommand{\hx}{\widehat{\bfx}}
\newcommand{\hy}{\widehat{\bfy}}
\newcommand{\hv}{\widehat{\bfv}}
\newcommand{\hw}{\widehat{\bfw}}
\newcommand{\htauy}{\widehat{\bftau}_{\bfy}}
\newcommand{\htauv}{\widehat{\bftau}_{\bfv}}
\newcommand{\Dt}{\Delta t}
\title{
Convergence analysis of asymptotic preserving schemes for strongly magnetized plasmas
}
\author{Francis Filbet}
\address{Universit\'e de Toulouse III \& IUF, 
UMR CNRS 5219, Institut de Math\'ematiques de Toulouse,
118 route de Narbonne,
F-31062 Toulouse cedex, France}
\email{francis.filbet@math.univ-toulouse.fr }
\thanks{FF is supported by the EUROfusion Consortium and has received funding
from the Euratom research and training programme 2014--2018 under the grant
agreement No 633053. The views and opinions expressed herein do not
necessarily reflect those of the European Commission.\\[-0.5em]}
\author{L.~Miguel Rodrigues}
\address{
Universit\'e de Rennes 1 \& IUF, CNRS, IRMAR - UMR 6625, F-35000 Rennes, France}
\email{{\tt luis-miguel.rodrigues@univ-rennes1.fr}}
\thanks{Research of LMR has received funding from the city of Rennes.\\[-0.5em]
}
\author{Hamed Zakerzadeh}
\address{Universit\'e de Toulouse III, UMR CNRS 5219, Institut de Math\'{e}matiques de Toulouse,
118 route de Narbonne,
F-31062 Toulouse cedex, France}
\email{seyed\_hamed.zakerzadeh@math.univ-toulouse.fr}
\thanks{HZ was supported by LabEx CIMI Toulouse and, partially, by Institut Universitaire de France.}
\begin{document}

\begin{abstract}
The present paper is devoted to the convergence analysis of a class of asymptotic preserving particle schemes \textit{[Filbet \& Rodrigues, \textit{SIAM J.\ Numer.\ Anal.}, 54(2) (2016):1120--1146]} for the Vlasov equation with a strong external magnetic field. In this regime, classical Particle-In-Cell (PIC) methods are subject to quite restrictive stability constraints on the time and space steps, {due to} the small Larmor radius and plasma frequency. The asymptotic preserving discretization that we are going to study removes such a constraint while capturing the large-scale dynamics, even when the discretization (in time and space) is too coarse to capture fastest scales. Our error bounds are explicit regarding the discretization, stiffness parameter, initial data and time.
\end{abstract}

\date{\today}
\maketitle

\vspace{0.1cm}
\noindent 
{\it Keywords}: Asymptotic preserving schemes; Vlasov--Poisson system; High-order time discretization; Homogeneous magnetic field; Particle methods 
\\

\vspace{0.1cm}
\noindent 
{\it 2010 MSC}: 35Q83, 65M75, 82D10, 65L04, 65M15.

\setcounter{tocdepth}{1}
\tableofcontents


\section{Introduction}
\label{sec:introduction}
\setcounter{equation}{0}

\subsection{Strongly magnetized plasmas}

Magnetized plasmas are encountered in a wide variety of astrophysical situations, but also in magnetic fusion devices such as tokamaks, where a large external magnetic field needs to be applied in order to keep the plasma particles on desired tracks. In numerical simulations of such devices, this large external magnetic field should be taken into account for pushing the particles, in particle methods \cite{birdsall}. However, due to the magnitude of the concerned field, this often adds a new time scale to the simulation, thus possibly a stringent restriction on the time step. In order to handle this additional timescale, one may wish to use numerical schemes whose stability is independent of the restrictive time step, and that compute approximate solutions retaining the large-scale behavior implied by the external field, even when time steps are too coarse to capture fast oscillations.

To get some first intuition, one can consider the simplest possible situation and follow the trajectory of a single particle in a constant magnetic field $\bB$ subject to no electric field. This trajectory turns out to be a helicoid along the magnetic field lines with a radius proportional to the inverse of the magnitude of $\bB$. Hence, when this field becomes very large, the particle gets trapped along the magnetic field lines. A slightly more precise description of the dynamics is that particles spin around a point on the magnetic field line, the \textit{``guiding center''}, whose velocity is smaller than the particle velocities. When electric field effects are taken into account and the magnetic field is not constant, the situation is more complicated, but still, the apparent particle velocity is smaller than the actual one and in many situations the link between the real and the apparent
velocity is well-known in terms of electromagnetic fields $\bB$ and $\bE$; see \cite{bri_hahm_07,PDFF,FR18,Krommes,Matteo-PhD,Scott_gyrokinetic} for instance.

The behavior of a plasma, constituted of a large number of charged particles, is even more complicated and may be described by the Vlasov equation coupled with the Maxwell or Poisson equations to compute the self-consistent fields. The Vlasov equation models, in essence, the evolution of a system of charged particles under the effects of external and
self-consistent fields by describing the time-evolution of the unknown $f(t,\bx,\bv)$, depending on the time $t$, the position $\bx$, and the velocity $\bv$, which represents the
distribution of particles in the phase space for each species with $(\bx,\bv) \in \RR^{d_\bx}\times \RR^{d_\bv}$ ($d_\bx,d_\bv=1,2,3$), as 
\mm{\label{eq:vlasov} 
\begin{cases}
\ds\frac{\partial f}{\partial t}\,+\,\Div_\bx(\bv f)
\,+\,\Div_\bv(\bF f)\,=\, 0\,,\\ \, \\
f(0,\cdot,\cdot)=f_0\,,
\end{cases}
}
where the force field $\bF(t,\bx,\bv)$, which  is coupled with the distribution function $f$,  makes the equation nonlinear. For instance, for the single-species Vlasov--Poisson model, the force field stems from the electric
field $\bE(t,\bx)$, \ie, it reads 
\mm{\label{poisson}
\bF(t,\bx,\bv) = \frac{q}{m}\,\bE(t,\bx)\,,\qquad \bE(t,\bx) = -\nabla_{\bx} \phi(t,\bx)\,, \qquad - \Delta_\bx\phi = \frac{\rho}{\epsilon_0}\,,
}
where $(m,q)$ are the elementary mass and charge (of one particle), $\phi(t,\bx)$ is the electric potential, $\epsilon_0$ is the electric constant, and the charge density $\rho(t,\bx)$ is given in terms of the distribution function as
\mmn{
\rho(t,\bx) := q\int_{\RR^{d_\bv}} f(t,\bx,\bv)\,\dD \bv\,.
}
When, in addition, we take into account a magnetic field $\bB(t,\bx)$, the Lorentz force applies, \ie,
$$
\bF(t,\bx,\bv) = \frac{q}{m}\,\big(\bE(t,\bx) + \bv \wedge \bB(t,\bx)\big)\,.
$$
Although the framework of our investigation can be adapted easily to the multi-species case, we shall only consider the single-species case, for the sake of simplicity, and scale parameters to $(m,q)\equiv (1,1)$.

As we are principally interested in the analysis of numerical schemes, we shall limit our focus to the quite simple case, with an external uniform magnetic field (in direction and magnitude), so to illustrate the bottom-line of the analysis with more ease. Note that, making such an assumption, we deprive ourselves of investigating phenomena such as curvature effects while we will be able to decouple dynamics in parallel and normal directions (with respect to the magnetic field), hence, we may concentrate on the particle motion in the perpendicular plane. More explicitly, in the present paper, we set 
\mmn{
\bB(t,\bx) \,\,=\,\, \frac{1}{\eps} \, \left( \begin{array}{l}0\\0\\1\end{array}\right)\,, 
}
and follow only the evolution of the first two components of the Cartesian space, 
$\bx=(x_1,x_2)\in \RR^2$. The parameter $\eps$ is related
to the ratio between the reciprocal Larmor frequency and the advection
timescale; see \cite{PDFF,haz_mei_03} and references therein for more
details {on such a scaling.} We are particularly interested in the regime where $0<\eps\ll 1$ as it implies that the magnetic field is very strong, which is required to confine the plasma, practically speaking. 

Under these assumptions, the \textit{``long-time coherent behavior''} arising in plasmas submitted to a strong external and uniform magnetic field will be obtained by the following Vlasov equation:
\mm{
\label{eq:vlasov_maxwell}
\begin{cases}
\ds\eps\frac{\d f^\eps}{\d t}\,+\,\Div_\bx(\bv f^\eps)\,+\,\Div_\bv\left((\bE\,-\, \frac{1}{\eps}\,\bv^\perp) f^\eps\right)
\,=\, 0\,,\\ \, \\
f^\eps(0,\cdot,\cdot)=f_0\,,
\end{cases}
}
where the \textit{orthogonal velocity} $\bv^\perp:=(-v_2,v_1)$ can be seen as the rotation of the original velocity with the rotation matrix $\J$:
\mm{
\label{rotation:mat}
\bv^\perp\,=\,\J\,\bv\,,\qquad\qquad \J:=\begin{bmatrix}0&-1\\1&0\end{bmatrix}\,.
}

At the continuous level, considerable efforts have been made on the rigorous derivation of reduced models from kinetic transport equations like \eqref{eq:vlasov_maxwell}, \ie, {in the so-called \textit{oscillatory limit} $\eps\to 0$  in which, there are very fast temporal oscillations in the plasma, of time scale $\cO(\eps^2)$, in the orthogonal direction
to the magnetic field}; see \cite{HanKwan_PhD,Lutz_PhD,Herda_PhD,HerdaR,FR18} for relatively recent panoramas on the question. In fact, one can obtain this limit system either using the
 formal Hilbert or Poincar\'{e} expansion (as in \cite{FR16,FR17}) or  by a rigorous approach (only when the magnetic field is homogeneous in space), \cf \cite{fre_son_98,GSR:99,SR:02} or more recently using the characteristic curves \cite{miot2016gyrokinetic}.  Nonetheless, the most remarkable mathematical
result is restricted to the two-dimensional setting with a constant magnetic field and
with interactions described through the Poisson equation, and yet validates
only half of the slow dynamics; see \cite{SR:02}, which is built on
\cite{GSR:99} and recently revisited in
\cite{miot2016gyrokinetic}. In fact, the reduced  limit system for the weak limit $f^\eps\rightharpoonup f$ writes \cite{fre_son_98}
\mm{\label{e:fgyro}
\ds\frac{\d f}{\d t}\,-\bE^\perp\cdot\nabla_\bx f \,-\frac{1}{2}\Delta\phi\,\bv^\perp\cdot\nabla_\bv f
\,=\, 0\,, \qquad\qquad (\bx,\bv)\in\RR^4\,,
}
while the limit of charge density converges to a solution of the following \cite{SR:02}:
\be\label{e:Vgyro}
\frac{\d \rho}{\d t}\,-\,\nabla_\bx\cdot\big(\rho\, \bE^\perp \big)
\,=\, 0\,, \qquad \bx\in\RR^2\,.
\ee
For the three dimensional linear Vlasov equation with an applied and smooth electromagnetic field, we  refer to \cite{FR18} for a recent study

From the discrete point of view, we are seeking methods which are able to capture this singularly oscillatory limit, {so that the numerical method provides a consistent discretization of the limit system as $\eps\to 0$, a concept known as	 \textit{asymptotic consistency}, with the numerical parameters to be independent of the singular scaling parameter $\eps$.} This concept has been widely studied for dissipative systems, since the pioneering works of \cite{jin:99,Klar:98}, in the framework of \textit{asymptotic preserving (AP) schemes}; see also the review paper \cite{jin2010asymptotic}. 
In the design of well-adapted numerical schemes to capture the slow part of the dynamics with a rather coarse discretization (compared to the fast scales), one could mention the two-scale convergence method \cite{fre_son_97,fre_son_98}, the micro-macro decomposition in \cite{CFHM:13}, lifting with multiple time variables \cite{CLM:13,CLMZ:MMS17,CLMZ:JCP17,CCLMZ:19}, exponential integrators \cite{FHLS:15,FHS:15}, frequency filtering \cite{HLW:19}, and implicit-explicit time discretizations \cite{FR16,FR17,filbet-yang}. The reader is also referred to \cite{CCZ:18,CHZ:18} for some numerical comparisons, including comparisons with more standard methods.

In the present work, we investigate the strategy of a series of work \cite{FR16,FR17,filbet2018numerical}, which live in the context of particle methods \cite{Lee}, and  hinge upon investigating the characteristics of the system, instead of using directly the PDE. Our goal, indeed, is to provide a complete convergence analysis of the Particle-In-Cell (PIC) methods introduced in \cite{FR16}, solving for the following system of characteristics:
\mm{
\label{e:Newton}
\begin{cases}
\eps\,\bx_\eps'(t)=\ds{\bv_\eps(t)}\,,
\\[1em]
\ds
\eps\,\bv_\eps'(t)={\bE(t,\bx_\eps(t))}\,-\,\frac{\bv_\eps^\perp(t)}{\eps}\,, 
\\[1em]
\bx_\eps(s)=\bx_\eps^s, \qquad \bv_\eps(s)=\bv_\eps^s\,,
\end{cases}
}
for $t\geq 0$ and for any regime of the scaling parameter $\eps$. So far, and in \cite{FR16}, some well-adapted schemes have been designed and analyzed in the regime where $\eps^2 $ is much smaller than the time step of the numerical scheme. Here, we shall perform a complete convergence and asymptotic error analysis for any values of the asymptotic parameter $\eps$ and of the time step, denoted $\Dt$ in the sequel.

In order to performing such an analysis, we start with estimates for the continuous model in \S\ref{sec:2}, followed by the discrete estimates for two versions of first-order numerical schemes in \S\ref{sec:3} and \S\ref{sec:4}. Then, we establish the convergence analysis for a second-order L-stable method in \S\ref{sec:5}, and we provide some numerical illustrations in \S\ref{sec:num_exp}.

To carry out a complete and rigorous analysis, details of the system and the schemes obviously come into play but some enlightening insights on the final outcomes may be obtained by considering an abstract system. We conclude by providing the reader with such abstract considerations in Appendix~\ref{s:app}.

\begin{notation}
{Hereinafter, and for the sake of brevity, we use}
\begin{itemize}
\item $\lesssim A$ to denote $\leq c_0 A$ for some universal constant $c_0$, and
\item $\underset{{\scriptscriptstyle {\alpha}}}{\lesssim} A$ to denote $\leq c_0(\alpha) A$ for some constant $c_0$ depending on $\alpha$.
introduced in \cite{FR16}, for the two-dimensional system with a homogeneous external magnetic field.
\end{itemize}\
\end{notation}

\begin{notation}
Our estimates shall be expressed in terms of
$$
K_0\ :=\ \|\bE\|_{L^\infty}\,,\quad 
K_t\ :=\ \left\|\partial_t\bE\right\|_{L^\infty}\,,\quad 
K_{x}\ :=\ \|\dD_\bx\bE\|_{L^\infty}\,,\quad 
K_{xx}\ :=\ \|\dD_\bx^2\bE\|_{L^\infty}\,,\quad\cdots 
$$
{In particular,} we assume global bounds on the electric field and its derivatives. We expect that a counterpart could be obtained  when fields are only locally bounded but the initial density is compactly supported. We omit, however, to state and prove this variant of our results as required adaptations are expected to be quite technical but rather classical. 
\end{notation}

\begin{remark}
  Part of our motivation to make explicit the dependence on $\bE$ in our estimates comes from the will to reduce the gap in extrapolating our results to a more nonlinear context where field equations couple fields to densities. In this respect, it is crucial to note that each time derivative of $\bE$ leads to an extra $\eps$-factor, since in a nonlinear context one expects to prove uniform $L^\infty$-bounds only on $(\eps\d_t)^\alpha\dd_\bx^{\,\beta} \bE$ and not on $\d_t^\alpha\dd_\bx^{\,\beta} \bE$ itself.
\end{remark}

\section{Oscillatory limit of the continuous model}
\label{sec:2}

In this section, we consider the characteristic system \eqref{e:Newton} of the Vlasov equation \eqref{eq:vlasov_maxwell} in the oscillatory limit, which corresponds to the limit system \eqref{e:Vgyro}. {It is worth mentioning that our presentation of the asymptotic analysis at the continuous level is close to \cite[\S 3]{FR18}, though with a different scaling for the time. Despite this similarity,  we prefer to expound this continuous analysis mainly because, later on, and at the discrete level,  we have to go along similar lines  for the numerical method.} 

We aim to compare the characteristic system
\eqref{e:Newton} with the limit system as $\eps\to 0$, that is, the \textit{guiding-center approximation}, obtained as the solution of the following equation:
\be
\label{e:Ngyro}
\begin{cases}
\bx'(t)\,=\,-\bE^\perp(t,\bx(t)), \qquad \forall t\geq 0,\\[1em]
\bx(s)=\bx^s\,.
\end{cases}
\ee
In particular, in this section, we prove $\bx=\displaystyle\lim_{\eps\to 0}\bx_\eps$ when $(\bx_\eps,\bv_\eps)$ solves \eqref{e:Newton} and $\bx$ solves \eqref{e:Ngyro}, and we quantify the corresponding error estimate. 

{To work with} a quantity that is slower than $\bx_\eps$, we introduce
$\by_\eps$ defined as 
\begin{equation}
\label{eq:yeps}
\by_\eps(t):=\bx_\eps(t)-\eps\,\bv_\eps^\perp(t)\,,
\end{equation}
where the couple $(\bx_\eps,\bv_\eps)$ is the solution to the
characteristic curves \eqref{e:Newton} of the kinetic model \eqref{eq:vlasov_maxwell}. Note that, hereinafter and for later use, we denote by 
\[(\bX_\eps(t,s,\bx_\eps^s,\bv_\eps^s),\bV_\eps(t,s,\bx_\eps^s,\bv_\eps^s)):=(\bx_\eps(t),\bv_\eps(t))\]
the value of the solution to \eqref{e:Newton}, at any time $t$, and accordingly $\bY_\eps(t,s,\bx_\eps^s,\bv_\eps^s):=\by_\eps(t)$. Likewise, when $\bx$ solves \eqref{e:Ngyro}, we denote $\bX(t,s,\bx^s):=\bx(t)$. 

As a preliminary remark, we note that solutions to \eqref{e:Newton}
are global in time as soon as $\bE\in L^\infty$. Also, we fix classical
notation $W^{k,p}$ for the Sobolev space with derivatives up to order $k$,
measured in the $L^p$ norm. We shall use, hereinafter, canonical Euclidean $\ell_2$-norms on vectors and corresponding operator norms on linear operators and matrices. Then, we have the following result:

\begin{theorem}
\label{thm:second_estim}
\begin{enumerate}[(i)] 
\item Assume that $\bE\in W^{1,\infty}$. Then, for any $\eps>0$, for the difference between flows of \eqref{e:Newton} and \eqref{e:Ngyro}, we have 
\begin{align*}
\|\bX_\eps(t,0,\bx_\eps^0,\bv_\eps^0)-\bX(t,0,\bx_\eps^0)\|
&\,\underset{{\scriptscriptstyle {\bE}}}{\lesssim}\,
\eps\,e^{K_{x}\,t}(1+t^2)\,(\|\bv^0_\eps\|+\eps)\,.
\end{align*}
\item Assume that $\bE\in W^{2,\infty}$. Then, for any $\eps>0$, we have
\begin{align*}
\|\bY_\eps(t,0,\bx_\eps^0,\bv_\eps^0)
-\bX(t,0,\bx^0_\eps-\eps(\bv^0_\eps)^\perp)\| 
&\,\underset{{\scriptscriptstyle {\bE}}}{\lesssim}\, \eps^2\,(1+t^4)\,e^{2\,K_{x}\,t}\left(1+\eps^2+\|\bv^0_\eps\|^2\right)\,.
\end{align*}
\end{enumerate}
\end{theorem}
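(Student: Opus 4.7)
The plan hinges on a fundamental identity that I would establish first by direct computation: differentiating $\by_\eps = \bx_\eps - \eps\bv_\eps^\perp$, using the characteristic system \eqref{e:Newton} and the fact that $\J^2 = -\Id$, yields
\begin{equation*}
\by_\eps'(t) \,=\, -\bE^\perp(t,\bx_\eps(t))\,.
\end{equation*}
Since this equation and the limit dynamics \eqref{e:Ngyro} share the same driving field $-\bE^\perp$, the whole comparison reduces to controlling the fast fluctuation $\bx_\eps - \by_\eps = \eps\bv_\eps^\perp$, which in turn demands a bound on $\|\bv_\eps\|$ that does not degrade as $\eps\to 0$. I would obtain such a bound by introducing the drift-corrected velocity $\bu_\eps := \bv_\eps^\perp - \eps\bE(t,\bx_\eps)$ and noting that in its evolution equation the stiff rotational term $-\bu_\eps^\perp/\eps^2$ is energy-neutral (since $\bu\cdot\bu^\perp = 0$); applying Gronwall to $\tfrac12\tfrac{d}{dt}\|\bu_\eps\|^2$ then yields $\|\bv_\eps(t)\|\,\underset{{\scriptscriptstyle{\bE}}}{\lesssim}\,e^{K_x t}(\|\bv_\eps^0\|+\eps)$.

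For part (i), I would use the decomposition $\bx_\eps - \bx = \eps\bv_\eps^\perp + (\by_\eps - \bx)$ and estimate $\by_\eps-\bx$ by Gronwall applied to $(\by_\eps-\bx)' = \bE^\perp(t,\bx) - \bE^\perp(t,\bx_\eps)$, whose right-hand side is bounded by $K_x(\|\by_\eps-\bx\| + \eps\|\bv_\eps\|)$. Combined with the initial gap $\|\by_\eps(0)-\bx(0)\| = \eps\|\bv_\eps^0\|$ and the velocity bound above, this closes the estimate at rate $O(\eps)$ and produces the claimed form, the polynomial factor in $t$ arising solely from integrating the exponential velocity bound against the Gronwall kernel.

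For part (ii) the initial conditions match by assumption, yet a naive Lipschitz bound still delivers only $O(\eps)$, because the oscillating displacement $\eps\bv_\eps^\perp$ already has amplitude of size $\eps$. The crucial gain of one additional power of $\eps$ is due to oscillatory cancellation. I would rewrite
\begin{equation*}
\by_\eps(t) - \bx(t) \,=\, \int_0^t\bigl[\bE^\perp(s,\bx) - \bE^\perp(s,\by_\eps)\bigr]ds \,+\, \int_0^t\bigl[\bE^\perp(s,\by_\eps) - \bE^\perp(s,\bx_\eps)\bigr]ds\,,
\end{equation*}
Taylor-expand the second integrand to second order (this is where $\bE\in W^{2,\infty}$ enters, giving a remainder of size $\eps^2 K_{xx}\|\bv_\eps\|^2$), substitute into the leading linear piece the algebraic identity $\bv_\eps^\perp = \eps\bE(t,\bx_\eps) - \eps^2\bv_\eps'$ extracted directly from the velocity equation of \eqref{e:Newton}, and integrate by parts in time on the $\bv_\eps'$ factor. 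This converts the apparent $O(\eps)$ contribution of the fast oscillation into boundary and interior terms of size $O(\eps^2)$. A final Gronwall argument applied to the first integral then closes the estimate at rate $O(\eps^2)$.

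The main obstacle is the bookkeeping in this last step: when integration by parts transfers a time derivative onto $\nabla_\bx\bE^\perp(\cdot,\by_\eps(\cdot))$, the chain rule produces $\nabla_\bx^2\bE^\perp\cdot\by_\eps' = -\nabla_\bx^2\bE^\perp\cdot\bE^\perp(\cdot,\bx_\eps)$, which is bounded but carries no extra smallness in $\eps$, so each power of $\eps$, each derivative of $\bE$, and each polynomial factor in $t$ and $\|\bv_\eps^0\|$ must be tracked through the successive Gronwalls in order to match the claimed bound $\eps^2(1+t^4)e^{2K_x t}(1+\eps^2+\|\bv_\eps^0\|^2)$.
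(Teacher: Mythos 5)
Your proposal is correct and follows essentially the same route as the paper: your auxiliary variable $\bu_\eps = \bv_\eps^\perp - \eps\bE(t,\bx_\eps)$ is just the rotation $\bz_\eps^\perp$ of the paper's $\bz_\eps := \bv_\eps + \eps\bE^\perp(t,\bx_\eps)$, your decomposition of $\by_\eps - \bx$ into a Gronwall piece plus an oscillatory piece matches the paper's rearrangement, and your integration by parts on the $\bv_\eps'$ term is exactly the paper's trick of writing $\dD_\bx\bE^\perp(t,\by_\eps)\bv_\eps'$ as a complete time derivative minus lower-order terms. The only minor imprecision is that the $\eps$-uniform velocity bound should carry an extra factor $(1+t)$ multiplying $\eps$ (so $\|\bv_\eps(t)\|\lesssim_{\bE} e^{K_x t}(\|\bv_\eps^0\| + \eps(1+t))$), but this is absorbed harmlessly by the polynomial-in-$t$ prefactors in the stated estimates.
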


\begin{remark}
The reader may rightfully remark that the foregoing estimates do not
scale sharply when $t\to0$ or $t\to\infty$. For instance, as the left-hand
side vanishes at time $t=0$, one could expect that the right-hand vanishes as well. Indeed, one may simply resolve such an issue by changing the $\cO(\eps)$ bound (for the first case) into $\cO(\min(\eps,t/\eps))$, by taking into account direct bounds on time derivatives. However, we have chosen to disregard this refinement as this provides an improvement only when $t=\cO(\eps^2)$. Also, in the reverse direction $t\to\infty$, we have chosen not to optimize constants or power of times. Typically, for the sake of simplicity, we have chosen to use bounds such as  
\[
e^{K_xt}\,\int_0^t e^{-K_xs}\dd s\,=\,\frac{e^{K_xt}-1}{K_x}
\,\leq\,t\,e^{K_x\,t}\,.
\]
\end{remark}

Before detailing the proof of Theorem~\ref{thm:second_estim}, we would like to highlight that, in essence, there are two steps in the proof :
\begin{enumerate}[(i)]
\item The first step is to prove the boundedness of the solutions of the characteristic system \eqref{e:Newton} with respect to $\eps$, sometimes referred to as $\eps$-boundedness below. We will discuss this kind of $\eps$-uniform estimates in \S\ref{sec:2.1}. Note that rough direct bounds would predict blow up in terms of $\eps$, not because of the skew-symmetric $\eps^{-2}$ term but due to existing $\eps^{-1}$ terms.
\item In the second step, one derives, from system~\eqref{e:Newton}, that the function to be compared with, either $\bx_\eps$ or $\by_\eps$, satisfies an equation, which is asymptotically close to the expected limiting equation \eqref{e:Ngyro}, the guiding center equation. This step is algebraic in nature and is carried out in \S\ref{sec:2.2}. It builds upon the fact that the velocity equation in the characteristic system~\eqref{e:Newton} yields that, formally speaking, $\bv_\eps$ is the sum of a quantity of size $\eps$ and the time derivative of size $\eps^2$. Note that the first step precisely ensures that this formal reasoning is valid.
\end{enumerate}
The conclusion is then obtained, also in \S\ref{sec:2.2}, by combining the two steps with a stability estimate on the expected limiting equation. 

\subsection{Uniform estimates on characteristics}
\label{sec:2.1}
In order to establish uniform estimates with respect to $\eps$, we, firstly, define  a new variable called $\bz_\eps$, as \footnote{Note that the definition of
  $\bz_\eps$ differs from the one in \cite{FR16} only by a scaling of $\eps$.}
\begin{equation}
\label{def:z}
\bz_\eps(t) \,:=\,\bv_\eps(t)\,+\,\eps \,\bE^\perp(t,\bx_\eps(t)), \quad t\geq 0\,,
\end{equation}
whose temporal dynamics is more purely oscillatory than the one of $\bv_\eps$, in the sense that the influence of non-oscillatory terms is less significant, namely here $\bz_\eps'+\eps^{-2}\bz_\eps^\perp=\cO(1)$ whereas $\bv_\eps'+\eps^{-2}\bv_\eps^\perp=\cO(\eps^{-1})$. This can be seen explicitly, since the time evolution of $\bz_\eps$ (\cf  \cite[eq. (5)]{FR16}) obeys
\begin{align}
\label{eq:z_time}
\begin{split}
\bz_\eps'(t)\,&=\,-\frac{1}{\eps^2}\bz_\eps^\perp(t)
\,+\,\eps\,\frac{\dD}{\dD t} \bE^\perp(t,\bx_\eps(t))\,,\\
&=\,-\frac{1}{\eps^2}\,\bz_\eps^\perp(t)
\,+\,\eps\,\d_t\bE^\perp(t,\bx_\eps(t))\,+\,\dD_\bx\bE^\perp(t,\bx_\eps(t))\left(\bz_\eps(t)\,-\,\eps\,\bE^\perp(t,\bx_\eps(t))\right)\,,
\end{split}
\end{align}
by using $\eps\,\bx'_\eps(t)={\bv_\eps(t)}$ from \eqref{e:Newton}. We should emphasize that the motivation for introducing $\bz_\eps$ and looking for a dynamics as purely oscillatory as possible is that the oscillatory part of the evolution preserves the Euclidean norm, hence one obtains readily a good estimate by using $\bz_\eps$, as in the following lemma. 

\begin{lemma}
\label{lem:apriori}
Let us assume that $\bE\in W^{1,\infty}$ and consider
the system corresponding to the characteristic curves of the Vlasov
equation \eqref{e:Newton}. Then, the auxiliary variable $\bz_\eps$ introduced in \eqref{eq:z_time} and the velocity $\bv_\eps$ are bounded {as}
$$
\begin{cases}
 \ds \|\bz_\eps(t)\|
\,\leq\,
e^{K_{x}\,t}
\big(\|\bv^0_\eps\|+\eps\,K_0\big)
+\eps\,t\,e^{K_{x}\,t}\left( K_t \,+\,K_{x}\,K_0 \right)\,, 
\\[0.9em]
\ds\| \bv_\eps(t)\| \,\leq\, \|\bz_\eps(t)\|\,+\,K_0\,\eps\,.
\end{cases}
$$
\end{lemma}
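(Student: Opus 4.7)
The plan is to exploit the fact that the equation \eqref{eq:z_time} governing $\bz_\eps$ has been designed so that the stiff rotation term $-\eps^{-2}\bz_\eps^\perp$ is the only $\eps^{-2}$-singular contribution and, crucially, it is skew-symmetric with respect to $\bz_\eps$ under the Euclidean inner product, while the remaining forcing is only $\cO(1)$ in $\eps$. First, I would perform a standard energy-type estimate: differentiate $\tfrac{1}{2}\|\bz_\eps\|^2$ in time, invoke \eqref{eq:z_time}, and use the identity $\bz_\eps \cdot \bz_\eps^\perp = 0$ to eliminate the stiff term. What remains is
\[
\tfrac{1}{2}\tfrac{\dD}{\dD t}\|\bz_\eps\|^2
\,=\,\eps\,\bz_\eps\cdot\d_t\bE^\perp(t,\bx_\eps)
\,+\,\bz_\eps\cdot\dD_\bx\bE^\perp(t,\bx_\eps)\bigl(\bz_\eps-\eps\bE^\perp(t,\bx_\eps)\bigr)\,.
\]

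Next, by Cauchy--Schwarz together with the $L^\infty$ bounds that define $K_0$, $K_t$, $K_x$, and using the submultiplicativity of the operator norm for $\dD_\bx\bE^\perp$, I would extract the scalar inequality
\[
\tfrac{\dD}{\dD t}\|\bz_\eps\|\,\leq\,K_x\,\|\bz_\eps\|\,+\,\eps\,(K_t+K_x\,K_0)\,,
\]
valid on the (open, dense) set where $\|\bz_\eps\|>0$, and extended to all times by standard absolutely continuous arguments. Grönwall's inequality then yields
\[
\|\bz_\eps(t)\|\,\leq\,e^{K_x t}\|\bz_\eps(0)\|
\,+\,\eps\,(K_t+K_x\,K_0)\int_0^t e^{K_x(t-s)}\dD s\,,
\]
and the announced bound follows after using the initial estimate $\|\bz_\eps(0)\|\leq \|\bv_\eps^0\|+\eps\,K_0$ (from the very definition \eqref{def:z}) and the elementary estimate $(e^{K_x t}-1)/K_x\leq t\,e^{K_x t}$ pointed out in the remark just above the statement.

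Finally, the velocity bound is immediate from the triangle inequality applied to \eqref{def:z}, namely $\|\bv_\eps(t)\|\leq \|\bz_\eps(t)\|+\eps\,\|\bE^\perp(t,\bx_\eps(t))\|\leq \|\bz_\eps(t)\|+\eps\,K_0$. There is no genuine obstacle here: the whole point of introducing $\bz_\eps$ in place of $\bv_\eps$ is precisely to kill the $\eps^{-1}$-singular forcing, so that the energy method applied to $\bv_\eps$ directly (which would blow up with $\eps$) works cleanly for $\bz_\eps$. The only mild subtlety is to justify the differentiation of $\|\bz_\eps\|$ at points where it may vanish, which is handled by differentiating $\|\bz_\eps\|^2$ and passing to the square-root form via the standard argument.
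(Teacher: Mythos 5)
Your proof is correct and follows essentially the same route as the paper: an energy estimate on $\|\bz_\eps\|^2$ to exploit skew-symmetry of the $\eps^{-2}$-term, reduction to a scalar differential inequality for $\|\bz_\eps\|$, Grönwall, and then the triangle inequality for $\bv_\eps$. The only cosmetic difference is that the paper handles the possible vanishing of $\|\bz_\eps\|$ by introducing $t_0$ as the supremum of zeros of $\bz_\eps$ in $[0,t]$ and integrating on $(t_0,t)$, whereas you invoke a generic absolute-continuity argument (and, incidentally, the set $\{\|\bz_\eps\|>0\}$ is open but need not be dense); both are adequate resolutions of the same minor technicality.
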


\begin{proof}
By taking the scalar product of \eqref{eq:z_time} with $\bz_\eps(t)$, which cancels out the singular term $-\bz_\eps^\perp(t)/\eps^2$, we obtain
\begin{align*}
 \frac{1}{2}\frac{\dD}{\dD
  t}\,\|\bz_\eps(t)\|^2&=\,\eps\,\bz_\eps(t)\cdot\d_t
                       \bE^\perp(t,\bx_\eps(t))\\ 
&\quad+\,\bz_\eps(t)\cdot\dD_\bx
  \bE^\perp(t,\bx_\eps(t))\,\bz_\eps(t)\,-\,\eps\,\bz_\eps(t)\cdot(\dD_\bx
  \bE^\perp \bE^\perp)(t,\bx_\eps(t))
\\
&\leq \eps \,K_t\,\|\bz_\eps(t)\| \,+\,K_{x} \,\|\bz_\eps(t)\|^2\,+\,\eps\,K_{x}\,K_0\,\|\bz_\eps(t)\|\,.
 \end{align*}
Denoting by $t_0$ the supremum of times in $[0,t]$ where $\bz_\eps$ vanishes\footnote{By definition $t_0=0$ if $\bz_\eps$ does not vanish.}, one may simplify by $\|\bz_\eps\|$ on $(t_0,t)$ to derive for any $s\in(t_0,t)$
\begin{align*}
 \frac{\dD}{\dD t}\|\bz_\eps\|(s)\leq \,K_{x}\,\|\bz_\eps(s)\|+\eps\left( K_t\,+\,K_{x}\,K_0\right)\,,
 \end{align*}
which, by integration, it yields
\begin{align*}
 \|\bz_\eps(t)\|&\leq\, e^{K_{x} (t-t_0)}
\|\bz_\eps(t_0)\|+\eps\,(t-t_0)\,e^{K_{x}\,(t-t_0)}
\,\left(K_t\,+\, K_{x}  \,K_0\right)\,,\\
&\leq\, e^{K_{x} t}
\|\bz_\eps(0)\|+\eps\,t\,e^{K_{x}\,t}\,\left(K_t\,+\, K_{x}  \,K_0\right)\,.
\end{align*} 
The second estimate, on $\bv_\eps(t)$, follows readily from $\bv_\eps(t)=\bz_\eps(t)\,-\,\eps \bE^\perp(t,\bx_\eps(t))$.
\end{proof}

So, one concludes that provided that the electric field is regular enough, \eg, $\bE\in W^{1,\infty}$, the norms $\|\bz_\eps\|$ and $\|\bv_\eps\|$ are bounded locally in time, uniformly with respect to $\eps$. 

\subsection{Proof of Theorem~\ref{thm:second_estim}}
\label{sec:2.2}

Now, we derive from the second equation of \eqref{e:Newton}
\begin{align*}
\eps\,(\bv^\perp_\eps)'(t)\,-\,\bE^\perp(t,\bx_\eps(t)) \,=\,\frac{\bv_\eps(t)}{\eps}\,,
\end{align*}
which, combined with the first equation of \eqref{e:Newton}, yields
\be\label{e:y-aux}
(\bx_\eps-\eps\bv_\eps^\perp)'(t)\,=\,-\bE^\perp(t,\bx_\eps(t))\,.
\ee
This shows that $\bx_\eps$ satisfies an equation seemingly close to the guiding center equation \eqref{e:Ngyro}.

Then, in order to prove the first estimate of Theorem~\ref{thm:second_estim}, we subtract \eqref{e:y-aux} from the limit system in \eqref{e:Ngyro}, integrate over $[0,t]$, and use the Lipschitz bound on $\bE$ as well as  Lemma~\ref{lem:apriori}, to obtain
\begin{align}\label{eq:asymp_dif}
\begin{split}
\|\bx_\eps(t)-\bX(t,0,\bx_\eps^0)\|
&\leq \eps(\|\bv_\eps^0\|+\|\bv_\eps(t)\|)
+K_x\int_0^t\|\bx_\eps(s)-\bX(s,0,\bx_\eps^0)\|\,\dD s\\
&\leq K_x\int_0^t\|\bx_\eps(s)-\bX(s,0,\bx_\eps^0)\|\,\dD s\\
&\quad+\eps\|\bv_\eps^0\|
+\eps\,\left(
K_0\eps
+e^{K_{x}\,t}(\|\bv^0_\eps\|+\eps\,K_0)
+\eps\,t\,e^{K_xt}\,\left( K_t \,+\,K_{x}\,K_0 \right)\right)\,.
\end{split}
\end{align}

At this stage, we are ready to apply \textit{Gr\"onwall's lemma} (in the integral form): Let $A$, $a$, and $K$ be non-negative constants such that
\begin{subequations}
\mm{\label{eq:gronwall1}
\qquad A(t)\leq a(t)+K\int_0^t A(s)\,\dD s, \qquad \forall t\geq 0.
}
Then, it holds
\mm{\label{eq:gronwall2}
\qquad A(t)\leq a(t)+K\int_0^t e^{K(t-s)}a(s)\,\dD s\,, \qquad \forall t\geq 0.
}
\end{subequations}
For applying this lemma to the bound of $\|\bx_\eps(t)-\bX(t,0,\bx_\eps^0)\|$ in \eqref{eq:asymp_dif}, we make use of crude estimates	 for simplicity:
\begin{align*}
\int_0^t e^{K(t-s)}s^a\,e^{Ks}\,\dD s&\leq e^{Kt}\,t^{a+1}\,,
&\int_0^t e^{K(t-s)}s^a\,\dD s&\leq e^{Kt}\,t^{a+1}\,,
\end{align*}
Thus, one gets
\begin{align*}
\|\bx_\eps(t)-\bX(t,0,\bx_\eps^0)\|
&\lesssim
\eps\,e^{K_{x}\,t}(1+t)\Big(\|\bv^0_\eps\|+\eps\,K_0
+\eps\,t\,\left( K_t \,+\,K_{x}\,K_0 \right)\Big)\,.
\end{align*}
This concludes the proof of the first part of Theorem~\ref{thm:second_estim} (using that  $t \lesssim(1+t^2)$). 

Now, in terms of the new variable $\by_\eps:=\bx_\eps-\eps\,\bv_\eps^\perp$, defined in \eqref{eq:yeps}, equation \eqref{e:y-aux} writes
\begin{equation}
\label{eq:y}
\begin{cases}
\by_\eps'(t)\,=\,-\bE^\perp(t,\by_\eps(t)+\eps \bv^\perp_\eps(t))\,, \qquad \forall t\geq 0,
\\[0.9em]
\ds\by_\eps(0) = \bx^0_\eps - \eps (\bv^0_\eps)^\perp\,.
\end{cases}
\end{equation}
By Taylor expansion (with integral remainder) we obtain
\be\label{e:y-expand}
\by_\eps' \,=\,
-\bE^\perp(t,\by_\eps)\,-\,\eps\,\dD_\bx\bE^\perp(t,\by_\eps)\,\bv_\eps^\perp\,+\,\eps^2\,\Theta_\eps(t,\by_\eps,\bv_\eps)\,,
\ee
where the remainder function $\Theta_\eps$ is bounded as
$$
\|\Theta_\eps(t,\by_\eps,\bv_\eps) \| \,\leq\, \frac12\,K_{xx}
\,\|\bv_\eps\|^2\,.
$$
Recalling that we would like to obtain an evolution equation for $\by_\eps$ that would be an $\cO(\eps^2)$-perturbation of the guiding center equation in \eqref{e:Ngyro}, the  issue, now, is to replace the variable $\bv_\eps$ on the right hand side of \eqref{e:y-expand}. For this purpose, we use, once again, the second equation of \eqref{e:Newton} written
as $\bv_\eps^\perp\,=\,-\eps^2\,\bv'_\eps\,+\,\eps\bE(t,\bx_\eps)$, and conclude that
$$
\by_\eps'\,=\,-\bE^\perp(t,\by_\eps)\,-\,\eps^2\,\left[\dD_\bx\bE^\perp(t,\by_\eps)\,
\bE(t,\bx_\eps) \,+\, \Theta_\eps(t,\by_\eps,\bv_\eps)\right]  \,+\,\eps^3 \,\dD_\bx\bE^\perp(t,\by_\eps)\,\bv_\eps' \,.
$$
The last term of the foregoing equation will be written with the help of a complete time derivative, \ie,
$$
\dD_\bx\bE^\perp(t,\by_\eps)\,\bv_\eps' \,=\,
\left(\dD_\bx\bE^\perp(t,\by_\eps)\,\bv_\eps\right)'\,-\,\left( \partial_t\dD_\bx\bE^\perp
  (t,\by_\eps) \,+\, \dD_{\bx}^2\bE^\perp (t,\by_\eps) \,\by_\eps'\right)\,\bv_\eps\,.
$$
Then, using \eqref{e:y-aux}, we obtain
\begin{eqnarray*}
\left(\by_\eps-\eps^3\dD_\bx\bE^\perp(t,\by_\eps)\bv_\eps\right)' &=&-\bE^\perp(t,\by_\eps) -\eps^2\left[\dD_\bx\bE^\perp(t,\by_\eps)
\bE(t,\bx_\eps) + \Theta_\eps(t,\by_\eps,\bv_\eps)\right]  
\\
&&-\eps^3\left(\d_t\dD_\bx\bE^\perp(t,\by_\eps)-\dD^2_\bx\bE^\perp(t,\by_\eps)\bE^\perp(t,\bx_\eps)\right)\bv_\eps\,.
\end{eqnarray*}
which is an equation $\cO(\eps^2)$-close to the equation in \eqref{e:Ngyro}.

Finally, and similarly as for the first estimate, we subtract the foregoing equation from the equation in \eqref{e:Ngyro} for a solution emanated from $\bx^0_\eps-\eps(\bv^0_\eps)^\perp$ at $s=0$, integrate over $[0,t]$, and use the Lipschitz bound on $\bE$ as well as  Lemma~\ref{lem:apriori}, to get
\begin{align*}
\|\by_\eps(t)-\bX(t,0,\bx^0_\eps-\eps(\bv^0_\eps)^\perp)\|
\leq\,&\, K_x\int_0^t\|\by_\eps(s)-\bX(s,0,\bx_\eps^0-\eps(\bv^0_\eps)^\perp)\|\,\dD s\,\\
&+\eps^2\frac{K_{xx}}{2}\int_0^t\|\bv_\eps(s)\|^2\,\dD s
+\eps^3(K_{tx}+K_{xx}K_0)\int_0^t\|\bv_\eps(s)\|\,\dD s\\
&+\eps^3\,K_x\,(\|\bv_\eps^0\|+\|\bv_\eps(t)\|)+\eps^2K_xK_0\,t.
\end{align*}
In the same line of argument as for the first estimate, that is, using Lemma~\ref{lem:apriori} and the Gr\"onwall lemma, one obtains after some manipulations
\begin{align*}
 \| \by_\eps(t)&-\bX(t,0,\bx^0_\eps-\eps(\bv^0_\eps)^\perp)\| \\
  \lesssim\, & \,\eps^2\,t\,e^{K_{x}\,t}\,(1+K_xt)\,\left[e^{K_{x}\,t}\,
     \,K_{xx}\|\bv^0_\eps\|^2 + K_x\,K_0\right]\\
&+\eps^3\,e^{K_{x}\,t}\,(1+K_xt)\,(\|\bv^0_\eps\|+\eps\,K_0)\,
\left[ \,t\,e^{K_{x}\,t}\,K_0\,K_{xx}+t\,(K_{tx}+K_{xx}K_0)+K_x\right]\,\\
&+\eps^4\,t\,e^{K_{x}\,t}\,(1+K_xt)\,(K_t\,+\,K_{x}\,K_0)
\,\left[ K_{xx}\,t^2\,e^{K_{x}\,t}\,(K_t\,+\,K_{x}\,K_0)
+t\,(K_{tx}+K_{xx}\,K_0)
+K_x\right]\,,
\end{align*}
which concludes the desired estimate by employing Young inequalities, typically, in the form
\[
\eps\,t\,\|\bv^0_\eps\|
\lesssim
\eps^2\,t^2+\|\bv^0_\eps\|^2\,.
\]

\subsection{From characteristics to PDE's: estimates on the {density}}
\label{sec:2.3}

As we will explain in this section, thanks to $L^\infty$-bounds on the characteristics system (see Lemma \ref{lem:apriori}) and its asymptotic evolution (see Theorem \ref{thm:second_estim}), it is straightforward to derive bounds on particle distributions, that is, on the densities, in the $W^{-1,1}$ topology. We recall that in the dual space $W^{-1,1}:=(W^{1,\infty})^*$, the canonical seminorm is defined by
\mmn{
\|\mu\|_{\dot{W}^{-1,1}}\,:=\,\sup_{\|\nabla\varphi\|_{L^{\infty}}\leq1}\left|\int \varphi\,\dD\mu\right|\,,
}
for $\mu\in W^{-1,1}$. Incidentally, note that the seminorm on $W^{-1,1}$ defines a distance, equivalent to the 1-Wasserstein distance, on probability measures with a finite first moment. 

{Let us also recall} the classical link between characteristics and solutions to continuity equations. In fact, the solution of an abstract continuity equation
\[
\d_t G \,+\,\Div_\bfa (\cX\,G)\,=\,0\,,
\]
with initial datum $G_0$, {writes} $G(t,\cdot)=\bA(t,0,\cdot)_*(G_0)$, where $\bA$ is the \textit{flow} associated with the differential equation $\bfa'=\cX(t,\bfa)$, and $\cA_*(\mu)$ denotes the push-forward of $\mu$ by $\cA$, which is defined by 
$$
\langle \cA_*(\mu),\varphi\rangle
:=\langle \mu,\varphi\circ\cA\rangle\,,
$$
for {all} test-functions $\varphi$. Note that the backbone of particle methods is the fact that the push-forward of a Dirac mass is given by $\cA_*(\delta_{\bfa_0})=\delta_{\cA(\bfa_0)}$. Note also that when $\cX$ is divergence-free, the formula {matches the one solving} the associated transport equation, namely $G(t,\cdot)=G_0\circ \bA(0,t,\cdot)$, {where $\circ$ stands for function composition,} but differs otherwise.

In particular, solutions to \eqref{eq:vlasov_maxwell} are obtained as 
\[
f^\eps(t,\cdot)=(\bX_\eps,\bV_\eps)(t,0,\cdot)_*(f_0),
\]
and, consequently, the corresponding charge density reads
\[
\rho^\eps(t,\cdot)=\bX_\eps(t,0,\cdot)_*(f_0)\,.
\]
{Having these, one can} readily derive the following estimate on {the density}.

\bc
\label{cor:density_estim}
Assume that the electric field is Lipschitz $\bE\in W^{1,\infty}$ and that $f_0$ is a probability with finite first moment. Then, for any $\eps>0$, the following estimate holds
 \begin{equation*}
\|\rho^\eps(t,\cdot)-\rho(t,\cdot)\|_{\dot{W}^{-1,1}} 
\,\underset{{\scriptscriptstyle {\bE}}}{\lesssim} 
\eps\,e^{K_{x}\,t}(1+t^2)\,
\int_{\RR^2\times\RR^2}
(\eps+\|\bv\|)\ \dD f_0(\bx,\bv)\,,
\end{equation*}
where $\rho^\eps$ is the charge density computed from $f^\eps$ solution to \eqref{eq:vlasov_maxwell} whereas $\rho$ solves to \eqref{e:Vgyro} with the same initial datum as $\rho^\eps$. 
\ec

\begin{proof}
For any test function $\varphi\in W^{1,\infty}(\RR^2)$, one gets from the formula recalled above
\begin{eqnarray*}
\left\langle \rho^\eps(t,\cdot)-\rho(t,\cdot),\varphi \right\rangle
&=& \int_{\RR^2\times\RR^2}
    \left(\varphi\left(\bX_\eps(t,0,\bx,\bv)\right) \,-\,
    \varphi\left(\bX(t,0,\bx^0)\right)\right)\,\dD f_0(\bx,\bv)\,.
\end{eqnarray*}
so that
\mmn{
\left|\left\langle \rho^\eps(t,\cdot)-\rho(t,\cdot),\varphi
  \right\rangle\right|
\leq\,\|\nabla\varphi\|_{L^\infty(\RR^2)}
\int_{\RR^2\times\RR^2}\|\bX_\eps(t,0,\bx,\bv)-\bX(t,0,\bx)\|\,\dD f_0(\bx,\bv),
}
and the proof is concluded by applying the first part of Theorem~\ref{thm:second_estim}.
\end{proof}

\section{First-order scheme on the original spatial variable}
\label{sec:3}

In the present section, we first complete the analysis of the first-order IMEX method introduced in \cite{FR16}. For a chosen constant time discretization step $\Dt$, we define a discrete time evolution, for $n\geq 0$, by
\be
\label{e:dNewton-1st}
\begin{cases}
\ds\frac{\bx_\eps^{n+1}-\bx_\eps^{n}}{\Dt }&=\,\ds\frac{\bv_\eps^{n+1}}{\eps}\,,
\\[1em]\ds 
\eps\,\frac{\bv_\eps^{n+1}-\bv_\eps^{n}}{\Dt }&=\,\ds
\bE(t_n,\bx_\eps^n)\,-\,\frac{(\bv_\eps^{n+1})^\perp}{\eps}\,,
\end{cases}
\ee
where $t_n=n\,\Dt$. Note that the scheme \eqref{e:dNewton-1st} is semi-implicit but the implicit part (the velocity update) is linear. It only requires solving the $2\times 2$ linear system
\[\bv_\eps^{n+1}+\frac{\Dt}{\eps^2}(\bv_\eps^{n+1})^\perp=\bv_\eps^n+\frac{\Dt}{\eps}
\bE(t_n,\bx_\eps^n)\,.\]
This highlights the role played by the matrix $\Id+\lambda\,\J$, with $\lambda:=\Dt/\eps^2>0$ and $\J$ as in \eqref{rotation:mat}; see Lemma \ref{l:1st-J} for further discussion on this matrix.

{As we will see later on, in Theorem \ref{thm:error_estim}, the scheme \eqref{e:dNewton-1st} has a unique solution, which means that it allows defining the sequence $(\bx_\eps^n,\bv_\eps^n)_{n\geq 0}$, which is expected to approximate the solution $(\bx_\eps,\bv_\eps)$ of \eqref{e:Newton} at times $(t_n)_{n\geq0}$.} The foregoing scheme is designed to capture the  evolution of the space variable $\bx_\eps$ even when $\Dt\gg \eps^2$, \ie, {the asymptotic regime in which the traditional schemes are doomed to fail due to instability}. We aim to show that this {asymptotic convergence is sufficient for obtaining an $\eps$-uniform error estimate, \ie, to ensure that the convergence of the scheme for the numerical error $\|\bx_\eps(t_n)-\bx_\eps^n\|$ will be locally uniform} with respect to $t_n\in\RR^+$ and $\eps\in\RR^+$. Moreover, we will {reveal that the convergence rate can be improved on the guiding center variable $\by_\eps$ compared to $\bx_\eps$ itself, when $\eps\ll 1$. Indeed, this variable, introduced in \eqref{eq:yeps}, may be thought as a slower version of $\bx_\eps$, hence it is expectedly more  advantageous to be used in the strongly oscillatory regime. To state comparisons for the discrete dynamics, we  introduce the discrete counterpart of $\by_\eps(t)$ in \eqref{eq:yeps}, that is,}
$$
\by_\eps^n=\bx_\eps^n-\eps\,(\bv_\eps^n)^\perp\,,
$$ 
for which,  using the scheme \eqref{e:dNewton-1st}, one gets the update	
\mm{
\label{eq:dy_time}
\dfrac{\by_\eps^{n+1}-\by_\eps^{n}}{\Dt }=\,-
\bE^\perp(t_n,\by_\eps^n+\eps\,(\bv_\eps^n)^\perp)\,.
}
By taking formally the limit $\eps\to 0$, we would anticipate that, at the discrete level, the reduced asymptotic model for the limit of $\bx_\eps^n$ is the explicit Euler scheme, that is
\be
\label{e:dNgyro-1st}
\dfrac{\bx^{n+1}-\bx^{n}}{\Dt }=\,-\bE^\perp(t_n,\bx^n)\,, \quad {\rm
  for }\,\, n\geq 0\,.
\ee

Now, we can state our main theorem on the scheme \eqref{e:dNewton-1st}. 

\begin{theorem}
\label{thm:error_estim}
 {The first-order scheme \eqref{e:dNewton-1st} possesses a unique solution. Moreover 
 \begin{enumerate}[(i)]
 \item when $\bE\in W^{1,\infty}$, the space variable $\bx_\eps$ satisfies for all $n\geq0$, $\Dt>0$ and $\eps>0$,
\mmn{
\|\bx_\eps^n-\bx_\eps(t_n)\|
&\underset{{\scriptscriptstyle {\bE}}}{\lesssim}\,
(1+t_{n}^2)\,e^{2K_xt_n}(1+\|\bv_\eps^0\|+\eps)
\times\min\Bigg(\frac{\Dt}{\eps^3}(1+\eps^2),
\Dt+\eps\Bigg),
}
\item when $\bE\in W^{2,\infty}$, the guiding center variable $\by_\eps$ satisfies for all $n\geq0$, $\Dt>0$ and $\eps>0$, 
\mmn{
\|\by_\eps^n-\by_\eps(t_n)\|
&\underset{{\scriptscriptstyle {\bE}}}{\lesssim}\,
(1+t_{n}^4)\,e^{2K_xt_n}(1+\|\bv_\eps^0\|^2+\eps^2)
\times\min\Bigg(\frac{\Dt}{\eps^3}\,(1+\eps)\,,\Dt+\eps^2\,\Bigg)\,.
}
\end{enumerate}}
\end{theorem}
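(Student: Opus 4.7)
The velocity update in \eqref{e:dNewton-1st} reduces to solving $(\Id+\lambda\J)\bv_\eps^{n+1} = \bv_\eps^n + (\Dt/\eps)\bE(t_n,\bx_\eps^n)$ with $\lambda:=\Dt/\eps^2$. Since $\J^2=-\Id$, the explicit inverse $(\Id+\lambda\J)^{-1} = (1+\lambda^2)^{-1}(\Id-\lambda\J)$ is well-defined with operator norm $(1+\lambda^2)^{-1/2}\leq 1$, which settles existence and uniqueness and, importantly, provides a contractive step uniformly in $\lambda\in(0,\infty)$. Following the blueprint of \S\ref{sec:2.1}, I would introduce the discrete analogue $\bz_\eps^n := \bv_\eps^n + \eps\,\bE^\perp(t_n,\bx_\eps^n)$, recast \eqref{e:dNewton-1st} as $\bz_\eps^{n+1} = (\Id+\lambda\J)^{-1}\bz_\eps^n + \eps\,\Dt\cdot R^n$ with a source $R^n$ controlled by $K_t$, $K_x$ and $K_0$, and conclude via discrete Gr\"onwall a bound on $\|\bz_\eps^n\|$ and $\|\bv_\eps^n\|$ structurally identical to Lemma~\ref{lem:apriori}.

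\textbf{Classical (consistency) bound.} Plugging $(\bx_\eps(t_n),\bv_\eps(t_n))$ into \eqref{e:dNewton-1st} and Taylor-expanding yields one-step residuals whose sizes are driven by $\bx_\eps' = \bv_\eps/\eps = \cO(1/\eps)$ and $\bv_\eps' = \cO(1/\eps^2)$, both controlled uniformly through the a priori bounds derived above. Propagating the velocity error by the contractive operator $(\Id+\lambda\J)^{-1}$ and applying discrete Gr\"onwall to the position error (where the Lipschitz constant $K_x$ of $\bE$ enters) then yields the first candidate in the minimum of part (i), of order $\Dt/\eps^3$ modulated by $e^{K_xt_n}$ and a polynomial in $t_n$.

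\textbf{Asymptotic bound and part (ii).} The second candidate rests on the observation, already displayed in \eqref{eq:dy_time}, that the guiding-center variable $\by_\eps^n$ satisfies an explicit Euler-type (uniformly non-stiff in $\eps$) discretization of $\by_\eps'(t) = -\bE^\perp(t,\by_\eps+\eps\bv_\eps^\perp)$. Mimicking \S\ref{sec:2.2} at the discrete level, I would use the velocity equation of \eqref{e:dNewton-1st} to rewrite $(\bv_\eps^n)^\perp$ as $\eps\,\bE(t_n,\bx_\eps^n) - \eps^2\,(\bv_\eps^{n+1}-\bv_\eps^n)/\Dt$ and sum by parts; the telescoping of $\bv_\eps^{n+1}-\bv_\eps^n$ leaves a residual $\cO(\Dt+\eps^2)$-consistent with the guiding-center flow \eqref{e:Ngyro} emanating from $\bx_\eps^0-\eps(\bv_\eps^0)^\perp$. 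A discrete Gr\"onwall argument then gives the bound in part (ii). For part (i), writing $\bx_\eps^n = \by_\eps^n + \eps(\bv_\eps^n)^\perp$ (and similarly for the exact flow) and invoking the $\eps$-uniform bound on $\|\bv_\eps^n\|$ transfers the estimate, at the price of one power of $\eps$, into the $\Dt+\eps$ bound. In both parts the two candidates are valid unconditionally, so their minimum is retained.

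\textbf{Main obstacle.} The technical heart of the argument is the discrete counterpart of the algebraic identity $\bv_\eps^\perp = \eps\,\bE - \eps^2\bv_\eps'$ used in \S\ref{sec:2.2} to extract an additional power of $\eps$ from the coupling in the $\by_\eps$ equation. At the continuous level this identity is free and the subsequent integration by parts is essentially lossless; discretely, one has to handle the staggering of $\bv_\eps^n$ versus $\bv_\eps^{n+1}$ created by the implicit rotation and perform a summation by parts that preserves the gain of one power of $\eps$ \emph{uniformly in} $\lambda = \Dt/\eps^2 \in (0,\infty)$, so that the same estimate covers both the fully resolved regime $\Dt\ll\eps^2$ and the asymptotic regime $\Dt\gg\eps^2$. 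Once this book-keeping is carried out, the remainder of the proof is a faithful discrete transcription of \S\ref{sec:2}.
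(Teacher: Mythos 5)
Your overall architecture is the right one — unique solvability via the explicit $2\times 2$ inverse, $\eps$-uniform bounds on a corrected velocity, a direct $\Dt/\eps^3$-type estimate, an asymptotic $\eps$-estimate, and a $\min$ at the end — but there are two concrete misfires in the discrete algebra and one structural difference from the paper's route.

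First, the substitution at the heart of part (ii) is wrong as written. From the velocity line of \eqref{e:dNewton-1st} at step $n$, the identity one gets is
\[
(\bv_\eps^{n+1})^\perp \,=\, \eps\,\bE(t_n,\bx_\eps^n)\,-\,\eps^2\,\frac{\bv_\eps^{n+1}-\bv_\eps^{n}}{\Dt}\,,
\]
not an expression for $(\bv_\eps^{n})^\perp$. The $\by_\eps$-update \eqref{eq:dy_time} features $(\bv_\eps^{n})^\perp$, so the relevant identity is the \emph{lagged} one, $(\bv_\eps^n)^\perp=\eps\,\bE(t_{n-1},\bx_\eps^{n-1})-\eps^2(\bv_\eps^{n}-\bv_\eps^{n-1})/\Dt$, valid only for $n\geq1$. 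Using your formula would replace $(\bv_\eps^n)^\perp$ by $(\bv_\eps^{n+1})^\perp$, and the discrepancy $\|\bv_\eps^{n+1}-\bv_\eps^{n}\|$ is of size $\cO(\|\bv_\eps\|+\Dt/\eps)$, not $\cO(\eps)$; the claimed gain of one power of $\eps$ is lost. With the correct lagged identity the summation by parts begins at $n=1$, and the untreated initial term contributes the $\eps\Dt\|\bv_\eps^0\|$ defect that appears explicitly in Proposition~\ref{prop:disct_err} and is discussed in Remark~3.12; the paper then dismisses it by noting $\eps\Dt\lesssim\eps^2+\Dt^2$. Your sketch asserts a clean $\cO(\Dt+\eps^2)$ consistency and so silently skips both the lag and the initial-step defect. (The later scheme \eqref{e:dNewton-1st_y} is the paper's fix for precisely this awkwardness, by putting $\bv_\eps^{n+1}$ rather than $\bv_\eps^n$ in the $\by$-update.)

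Second, your discrete auxiliary variable $\bz_\eps^n:=\bv_\eps^n+\eps\,\bE^\perp(t_n,\bx_\eps^n)$ is not the paper's; Lemma~\ref{lem:disct_zbound} uses the lagged $\bz_\eps^n:=\bv_\eps^n+\eps\,\bE^\perp(t_{n-1},\bx_\eps^{n-1})$. This is not cosmetic: the lag is exactly what makes the nonoscillatory part of the velocity equation cancel, leaving $(\Id+\lambda\J)\bz_\eps^{n+1}=\bz_\eps^n+\eps\big(\bE^\perp(t_n,\bx_\eps^n)-\bE^\perp(t_{n-1},\bx_\eps^{n-1})\big)$, whence the clean unconditional recursion $\|\bz_\eps^{n+1}\|\leq(1+K_x\Dt)\|\bz_\eps^n\|+\eps\Dt(K_t+K_xK_0)$. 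With your version the source picks up a $\cO(\lambda\Dt\,\|\bv_\eps\|)$ piece; one can still close the estimate by absorbing $\|\bz_\eps^{n+1}\|$ from the right-hand side, but only under a smallness restriction on $\Dt$, whereas the theorem claims the bound for all $\Dt>0$.

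Finally, structurally, you propose a two-term triangle inequality (compare $\by_\eps^n$ directly to the continuous guiding-center flow, then compare that flow to $\by_\eps(t_n)$), whereas the paper uses a three-term decomposition through the intermediate discrete Euler scheme for the limit equation: $\by_\eps^n\to\by^n$ (discrete asymptotic estimate, Proposition~\ref{prop:disct_err}), $\by^n\to\bx(t_n)$ (classical Euler convergence for the $\eps$-free system, Proposition~\ref{prop:E1_gc}), and $\bx(t_n)\to\by_\eps(t_n)$ (continuous asymptotic estimate, Theorem~\ref{thm:second_estim}). Your two-way version is viable in principle but mixes the time-discretization error and the asymptotic error in a single consistency estimate; the paper's factorization is cleaner precisely because the discrete-to-discrete comparison isolates the place where the lagged identity and the initial step enter. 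You should in any case spell out the missing non-stiff Euler convergence estimate, which is needed either way.
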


\begin{corollary}\label{cor:error_estim}
{Theorem \ref{thm:error_estim} implies that, for different regimes of $\Dt$ and $\eps$,} 
\mm{
\|\bx_\eps^n-\bx_\eps(t_n)\| \,\underset{{\scriptscriptstyle (t_n,\bE,\|\bv_\eps^0\|)}}{\lesssim}\,
(1+\eps)
\begin{cases}
\quad\frac{\Dt }{\eps^3}(1+\eps^2),&\qquad \textrm{if }\quad\Dt\lesssim\eps^4\,,\\
\quad\eps,&\qquad\textrm{if }\quad\eps^4\lesssim \Dt\lesssim \eps\,,\\
\quad\Dt,&\qquad\textrm{if }\quad\eps\lesssim \Dt\,,
\end{cases}
}
and
\mm{
\|\by_\eps^n-\by_\eps(t_n)\|\,\underset{{\scriptscriptstyle (t_n,\bE,\|\bv_\eps^0\|)}}{\lesssim}\,
(1+\eps^2)\,
\begin{cases}
\quad\frac{\Dt }{\eps^3}(1+\eps),&\qquad \textrm{if }\quad\Dt\lesssim\eps^5\,,\\
\quad\eps^2,&\qquad\textrm{if }\quad\eps^5\lesssim \Dt\lesssim \eps^2\,,\\
\quad\Dt,&\qquad\textrm{if }\quad\eps^2\lesssim \Dt\,.
\end{cases}}
\end{corollary}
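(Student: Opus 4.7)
The plan is to treat this corollary as a direct case-analysis unpacking of the two bounds proved in Theorem~\ref{thm:error_estim}. More precisely, I would absorb the prefactor $(1+t_n^2)e^{2K_x t_n}(1+\|\bv^0_\eps\|)$ (respectively $(1+t_n^4)e^{2K_x t_n}(1+\|\bv^0_\eps\|^2)$) into the symbol $\underset{(t_n,\bE,\|\bv^0_\eps\|)}{\lesssim}$, retain the $(1+\eps)$ (respectively $(1+\eps^2)$) factor explicitly on the right-hand side, and then identify which argument of the $\min$ dominates in each of the announced regimes for $\Dt$ versus $\eps$.

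For the estimate on $\bx_\eps$, I would first rewrite
\[
\tfrac{\Dt}{\eps^3}(1+\eps^2)\,=\,\tfrac{\Dt}{\eps^3}\,+\,\tfrac{\Dt}{\eps}\,.
\]
When $\Dt\lesssim\eps^4$, both summands are $\lesssim\eps$, so this quantity is $\lesssim\eps\leq \Dt+\eps$, and the $\min$ coincides with its first argument $\tfrac{\Dt}{\eps^3}(1+\eps^2)$. When $\eps^4\lesssim\Dt\lesssim\eps$, one has $\Dt+\eps\lesssim\eps$, so the $\min$ is bounded by $\eps$. When $\eps\lesssim\Dt$, the trivial inequality $\Dt+\eps\lesssim\Dt$ gives the bound $\Dt$. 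These are exactly the three branches claimed for $\|\bx_\eps^n-\bx_\eps(t_n)\|$.

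The estimate on $\by_\eps$ follows verbatim the same scheme, with thresholds shifted because the first argument of the minimum is now $\tfrac{\Dt}{\eps^3}(1+\eps)=\tfrac{\Dt}{\eps^3}+\tfrac{\Dt}{\eps^2}$ and the second is $\Dt+\eps^2$. The first is $\lesssim\eps^2$ exactly when $\Dt\lesssim\eps^5$; between $\Dt\sim\eps^5$ and $\Dt\sim\eps^2$ the second argument is $\lesssim\eps^2$; and above $\Dt\sim\eps^2$ the second argument is $\lesssim\Dt$. This produces the three advertised regimes with transitions at $\eps^5$ and $\eps^2$.

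There is no real obstacle: the corollary is purely algebraic bookkeeping on the $\min$-bounds of Theorem~\ref{thm:error_estim}. The only mild care needed is to verify that in each regime the chosen branch is actually an upper bound for the $\min$ (which follows from the simple threshold computations above), not merely a formal simplification, and that the cross-check at the boundaries $\Dt\sim\eps^4,\,\eps$ (respectively $\Dt\sim\eps^5,\,\eps^2$) yields consistent orders of magnitude, which is immediate.
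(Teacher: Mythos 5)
Your proof is correct and matches the paper's (implicit) argument: Corollary~\ref{cor:error_estim} is indeed a direct unpacking of the $\min$ bounds in Theorem~\ref{thm:error_estim}, with the $\eps$-prefactors factored off via $1+a+b\leq(1+a)(1+b)$ and the regimes read off by bounding $\min(A,B)$ by whichever argument is convenient. One small remark: in the regime $\Dt\lesssim\eps^4$ (resp.\ $\Dt\lesssim\eps^5$) you do not need to verify that the $\min$ is attained by its first argument --- the trivial inequality $\min(A,B)\leq A$ already yields that branch --- and in fact your intermediate claim that $\Dt/\eps\lesssim\eps$ only holds for $\eps\lesssim1$; dropping it streamlines and slightly strengthens the argument.
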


\begin{remark}
The estimates of Corollary \ref{cor:error_estim} shows that for a fixed $\Dt$, when $\eps$ is either sufficiently small or sufficiently large, both estimates {boil down to} a  uniform $\cO(\Dt)$ estimate. Nonetheless, the worst-$\eps$ scenarios yield the following uniform rates :
\[
\|\bx_\eps^n-\bx_\eps(t_n)\| \,\underset{{\scriptscriptstyle (t_n,\bE,\|\bv_\eps^0\|)}}{\lesssim}\,(\Dt)^{1/4}\,,\qquad\qquad
\|\by_\eps^n-\by_\eps(t_n)\|\,\underset{{\scriptscriptstyle (t_n,\bE,\|\bv_\eps^0\|)}}{\lesssim}\,(\Dt)^{2/5}\,,
\]
obtained, respectively, when $\Dt\sim \eps^4$ and when $\Dt\sim\eps^5$.
\end{remark}

{Based on the foregoing estimates, it will be painless to obtain an error estimate  at the particle density level. Note, however, that the full PIC error estimates would involve errors not due to the time discretizations and, hence, not taken into account here. To state the corresponding corollary, we denote the discrete flow $(\bX_\eps^{\Dt},\bV_\eps^{\Dt})(t_n,t_s,\bx^s,\bv^s)$ as the solution to \eqref{e:dNewton-1st} starting from $(\bx^s,\bv^s)$ at index $s$. Note in particular that $n\geq s\geq0$ and $t_n=n\Dt$, $t_s=s\Dt$. For later use, we also set 
\[ \bY_\eps^{\Dt}(t_n,t_s,\bx^s,\bv^s):=(\bX_\eps^{\Dt}-\eps(\bV_\eps^{\Dt})^\perp)(t_n,t_s,\bx^s,\bv^s).\]
}

\bc
\label{cor:density_error-estim}
Assume that the electric field is $\bE\in W^{1,\infty}$ and that $f_0$ is a probability with finite first moment. Then, for any $\eps>0$, the following uniform error estimate holds
 \begin{equation*}
\|\rho_{\Dt}^\eps(t_n,\cdot)-\rho^\eps(t_n,\cdot)\|_{{W}^{-1,1}} 
\,\underset{{\scriptscriptstyle {\bE}}}{\lesssim} 
\min\Bigg(\frac{\Dt}{\eps^3}(1+\eps^2),
\Dt+\eps\Bigg)\,e^{2K_{x}\,t_n}(1+t_n^2)\,
\int_{\RR^2\times\RR^2}\hspace{-1.5em}
(1+\eps+\|\bv\|)\ \dD f_0(\bx,\bv)\,,
\end{equation*}
where $\rho^\eps$ is the charge density computed from $f^\eps$ solution to \eqref{eq:vlasov_maxwell} whereas $\rho_{\Dt}^\eps$ is defined at times $(t_n)_{n\in\N}=(n\Dt)_{n\in\N}$ by
\[
\rho_{\Dt}^\eps(t_n,\cdot)\,=\,\bX_\eps^{\Dt}(t_n,0,\cdot,\cdot)_*(f_0)\,.
\]
\ec

As suggested by the presence of the $\min$ function in claimed
estimates, the proof of Theorem~\ref{thm:error_estim}, provided in
subsequent subsections, combines two kinds of estimates: {the
  $\Dt/\eps^3$ part which arises from {a detailed version} of
  classical convergence estimates (Proposition \ref{prop:E1_y} below),
  and $\eps$-uniform part which stems from combining three estimates
  through the triangle inequality, namely, asymptotic estimates at
  continuous (Theorem \ref{thm:second_estim}) and discrete
  (Proposition \ref{prop:disct_err} below) levels, and a classical
  convergence estimate for the non-stiff reduced asymptotic model
  ($\eps$-independent), which will be discussed in Proposition
  \ref{prop:E1_gc} below.} Thus, eventually,  {it leads to an error estimate which is} $\cO(\eps+\Dt)$, \ie, 
\[
\|\bx_\eps(t_n)-\bx_\eps^n\|
\,\leq\,
\|\bx_\eps(t_n)-\bx(t_n)\|
+\|\bx(t_n)-\bx^n\|
+\|\bx^n-\bx_\eps^n\|
\underset{{\scriptscriptstyle (t_n,\bE,\|\bv_\eps^0\|)}}{\lesssim} \eps+\Dt+\eps\,,
\]
with $\bx(t_n)$ solving \eqref{e:Ngyro} and $(\bx^n)_{n\geq0}$ solving \eqref{e:dNgyro-1st}.

\subsection{Direct convergence estimates}
\label{sec:3.2}
{In this section, we discuss direct convergence estimates, \ie, we estimate the difference of the numerical solution and the exact one, for the $\eps$-dependent system (in Proposition \ref{prop:E1_y}) and the asymptotic model (in Proposition \ref{prop:E1_gc}). The former estimate is called, hereinafter, the \textit{direct estimate} and gives rise to the  $\Dt/\eps^3$ part in Theorem \ref{thm:error_estim}. }
\subsubsection{$\eps$-dependent direct estimate} 
{We begin with the direct estimate for which} {it would be more convenient} to carry out the analysis in variables $(\by_\eps,\bv_\eps)$ rather than $(\bx_\eps,\bv_\eps)$. {The first step is the direct consistency analysis; we introduce consistency errors, using the $\by_\eps$-update and \eqref{e:dNgyro-1st}, as}
\begin{subequations}
\mm{
\label{eq:tauy_1st}
\bftau_{\by}^n&\,:=\,
\ds\frac{\by_\eps(t_{n+1})-\by_\eps(t_{n})}{\Dt}\,+\,\bE^\perp(t_n,\by_\eps(t_n)\,+\,\eps\,\bv_\eps^\perp(t_n))\,,
\\
\label{eq:tauv_1st}
\bftau_{\bv}^n&\,:=\,
\ds\frac{\bv_\eps(t_{n+1})-\bv_\eps(t_{n})}{\Dt}\,-\,\dfrac{\bE(t_n,\by_\eps(t_n)+\eps\, \bv_\eps^\perp(t_n))}{\eps}\,+\,\dfrac{\bv_\eps^\perp(t_{n+1})}{\eps^2}\,.
}
\end{subequations}	
These local truncation errors can be bounded as in the following lemma.

\begin{lemma}\label{l:1st_consistency}
For every $\eps>0$ and $\Dt>0$, {the truncation errors $(\bftau_{\by}^n)_{n\geq 0}$ and $(\bftau_{\bv}^n)_{n\geq 0}$ defined in \eqref{eq:tauy_1st}--\eqref{eq:tauy_1st} are bounded as}
{\begin{align*}
\|\bftau_{\by}^n\|&\underset{{\scriptscriptstyle {\bE}}}{\lesssim}
\frac{\Dt}{\eps}\ e^{K_{x}\,t_{n+1}}\ 
\Big( \|\bv^0_\eps\|+\eps\,(1+t_{n+1}) \Big)\,,\\
\|\bftau_{\bv}^n\|&\underset{{\scriptscriptstyle {\bE}}}{\lesssim}
\frac{\Dt}{\eps^4}\ e^{K_{x}\,t_{n+1}}\ (1+\eps^2)\,
\Big(\|\bv^0_\eps\|+\eps\,(1+t_{n+1})\Big)\,.
\end{align*}}
\end{lemma}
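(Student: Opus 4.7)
The plan is to rewrite each truncation error as an integral of an exact time derivative (Taylor with integral remainder), substitute the characteristic equations \eqref{e:Newton} and \eqref{e:y-aux} to identify the integrand, and then control the resulting differences via Lipschitz bounds on $\bE$ together with the uniform $L^\infty$ bound on $\bv_\eps$ provided by Lemma~\ref{lem:apriori}. The rest is just careful bookkeeping of $\eps$-powers.

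For $\bftau_\by^n$, using $\bx_\eps=\by_\eps+\eps\,\bv_\eps^\perp$ together with \eqref{e:y-aux} gives $\by_\eps'(t)=-\bE^\perp(t,\bx_\eps(t))$, hence
\[
\bftau_\by^n=\frac{1}{\Dt}\int_{t_n}^{t_{n+1}}\bigl[\bE^\perp(t_n,\bx_\eps(t_n))-\bE^\perp(s,\bx_\eps(s))\bigr]\,\dD s.
\]
Splitting the increment into a time part and a space part, I would bound the former by $K_t\,\Dt$ and the latter by $K_x\|\bx_\eps(s)-\bx_\eps(t_n)\|$; since $\bx_\eps'=\bv_\eps/\eps$, the spatial increment is at most $\frac{\Dt}{\eps}\sup_{[0,t_{n+1}]}\|\bv_\eps\|$. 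Plugging the bound on $\|\bv_\eps\|$ from Lemma~\ref{lem:apriori} and noting that the $K_t\,\Dt$ contribution is absorbed into the $\frac{\Dt}{\eps}\cdot\eps\,(1+t_{n+1})$ part yields the claimed estimate.

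For $\bftau_\bv^n$, the equation \eqref{e:Newton} gives $\bv_\eps'(t)=\bE(t,\bx_\eps(t))/\eps-\bv_\eps^\perp(t)/\eps^2$. Writing the finite difference as an integral of $\bv_\eps'$ decomposes $\bftau_\bv^n$ into two pieces:
\[
\bftau_\bv^n=\frac{1}{\Dt\,\eps}\int_{t_n}^{t_{n+1}}\!\!\bigl[\bE(s,\bx_\eps(s))-\bE(t_n,\bx_\eps(t_n))\bigr]\dD s+\frac{1}{\Dt\,\eps^2}\int_{t_n}^{t_{n+1}}\!\!\bigl[\bv_\eps^\perp(t_{n+1})-\bv_\eps^\perp(s)\bigr]\dD s.
\]
The first piece is handled exactly as in the estimate above, paying only an extra factor $1/\eps$, hence contributes $K_t\Dt/\eps+K_x\Dt\,\|\bv_\eps\|_{L^\infty}/\eps^2$. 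For the second, singular, piece I would bound $\|\bv_\eps(t_{n+1})-\bv_\eps(s)\|$ by $\int_s^{t_{n+1}}\|\bv_\eps'\|\leq \frac{\Dt}{\eps}K_0+\frac{\Dt}{\eps^2}\|\bv_\eps\|_{L^\infty}$, which after multiplication by the prefactor $1/\eps^2$ produces the dominant $\Dt/\eps^4$ term.

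Collecting all contributions gives a bound proportional to
\[
\frac{\Dt}{\eps}+\frac{\Dt}{\eps^2}\|\bv_\eps\|_{L^\infty}+\frac{\Dt}{\eps^3}+\frac{\Dt}{\eps^4}\|\bv_\eps\|_{L^\infty}\,=\,\frac{\Dt}{\eps^4}\,(1+\eps^2)\,\bigl(\eps+\|\bv_\eps\|_{L^\infty}\bigr),
\]
and invoking Lemma~\ref{lem:apriori} once more to estimate $\|\bv_\eps\|_{L^\infty([0,t_{n+1}])}$ yields the announced bound. The only delicate point is this final reorganization of $\eps$-powers, namely recognizing the factor $(1+\eps^2)/\eps^4$ hidden in the four terms above; conceptually nothing is difficult, it is all consistency analysis via Taylor expansions and Lipschitz estimates.
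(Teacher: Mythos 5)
Your proposal is correct and follows essentially the same route as the paper's proof: both identify the truncation errors as Taylor remainders via the ODEs $\by_\eps'(t)=-\bE^\perp(t,\bx_\eps(t))$ and $\bv_\eps'(t)=\bE(t,\bx_\eps(t))/\eps-\bv_\eps^\perp(t)/\eps^2$, then control the remainders with the Lipschitz bounds on $\bE$ and the uniform velocity bound from Lemma~\ref{lem:apriori}. The only cosmetic difference is that you keep the remainder in integral form and bound increments of $\by_\eps'$ and $\bv_\eps'$ directly, whereas the paper estimates $\max\|\by_\eps''\|$ and splits $\bftau_\bv^n$ as $(\tfrac{\bv_\eps(t_{n+1})-\bv_\eps(t_n)}{\Dt}-\bv_\eps'(t_n))+\tfrac{\bv_\eps^\perp(t_{n+1})-\bv_\eps^\perp(t_n)}{\eps^2}$, bounding the two pieces by $\tfrac{\Dt}{2}\max\|\bv_\eps''\|$ and $\tfrac{\Dt}{\eps^2}\max\|\bv_\eps'\|$; both yield the same $\eps$-power count, in particular your final reorganization into the factor $\tfrac{\Dt}{\eps^4}(1+\eps^2)(\eps+\|\bv_\eps\|_{L^\infty})$ is exact.
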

\begin{proof}
The truncation error $\bftau_{\by}^n$ can be written as
\[
\bftau_{\by}^n\,=\,
\frac{\by_\eps(t_{n+1})-\by_\eps(t_{n})}{\Dt}-\by_\eps'(t_n)\,.
\]
{Thus, for any $n\geq0$,  $\|\bftau_{\by}^n\|\leq \tfrac12\Dt\,\max_{[t_n,t_{n+1}]}\|\by_\eps''\|$, where the second derivative reads}
\[
\by_\eps''(t)\,=\,
-\d_t\bE^\perp(t,\bx_\eps(t))-\frac1\eps\dD_\bx\bE^\perp(t,\bx_\eps(t))\bv_\eps(t)\,,
\]
which, thanks to Lemma~\ref{lem:apriori}, yields the first bound:
\mmn{
\|\bftau_{\by}^n\|
&\leq \frac{\Dt}{2}
\left(
(K_t+K_xK_0)(1+K_x\,t_{n+1}\,e^{K_{x}\,t_{n+1}})+\frac{K_x}{\eps}
e^{K_{x}\,t_{n+1}}\big(\|\bv^0_\eps\|+\eps\,K_0\big)
\right)\,,
\\
&\underset{{\scriptscriptstyle {\bE}}}{\lesssim}
\frac{\Dt}{\eps}\ e^{K_{x}\,t_{n+1}}\ 
\big( \|\bv^0_\eps\|+\eps\,(1+t_{n+1}) \big)\,.
}

Likewise, for the second estimate, one has
\mmn{
\bftau_{\bv}^n=
\ds\frac{\bv_\eps(t_{n+1})-\bv_\eps(t_{n})}{\Dt}\,-\,\bv_\eps'(t_{n})
\,+\,\dfrac{\bv_\eps^\perp(t_{n+1})-\bv_\eps^\perp(t_{n})}{\eps^2}\,,
}
{which implies the estimate}
\mmn{
\|\bftau_{\bv}^n\|\leq
\frac{\Dt}{2}\,\max_{[t_n,t_{n+1}]}\|\bv_\eps''\|
\,+\,\dfrac{\Dt}{\eps^2}\,\max_{[t_n,t_{n+1}]}\|\bv_\eps'\|\,.
}
{Moreover, the second order derivative reads}
\[
\bv_\eps''(t)\,=\,
\frac1\eps\d_t\bE(t,\bx_\eps(t))+\frac{1}{\eps^2}\dD_\bx\bE(t,\bx_\eps(t))\bv_\eps(t)-\frac{1}{\eps^3}(\bE(t,\bx_\eps(t)))^\perp
-\frac{1}{\eps^4}\bv_\eps(t)\,,
\]
which concludes the proof  {combined with} Lemma~\ref{lem:apriori}, \ie, 
\mmn{
\|\bftau_{\bv}^n\|
&\,{\lesssim}\,\,\frac{\Dt}{\eps}\,e^{K_{x}\,t_{n+1}} \left(
  (K_t+K_xK_0)\,\left[1+\left(K_x+\frac{1}{\eps^2}\right)\,t_{n+1}
  \right] \,+\,  \left(K_0 + \frac{\|\bv^0_\eps\||}{\eps}\right)\,\left[ K_x + \frac{{1}}{\eps^2}\right]\right) 
\nonumber\\
&\underset{{\scriptscriptstyle {\bE}}}{\lesssim}
\,\,\frac{\Dt}{\eps^4}\ e^{K_{x}\,t_{n+1}}\ (1+\eps^2)\,
\Big( \|\bv^0_\eps\|+\eps\,(1+t_{n+1})\Big)\,.
\nonumber
}
\end{proof}

In order to obtain the $\Dt/\eps^3$ bounds, the missing piece is the stability analysis. So, we aim to investigate the stability of the implicit part, with the corresponding matrix $\Id+\lambda\,\J$ to be inverted. The following stability result is based on the fact that $\J$ is skew-symmetric and $\J^2=-\Id$.

\begin{lemma}\label{l:1st-J}
Let $\J$ be as in \eqref{rotation:mat}. Then, for any $\lambda>0$, $\Id+\lambda\J$ is invertible and
\mmn{
\left(\Id+\lambda\,\J\right)^{-1}\,=\,
\frac{1}{1+\lambda^2}\left(\Id-\lambda\,\J\right),
}
{whose norm is bounded like}
\mm{
\label{landa}
\Lambda_\lambda^{-1}:= \left\|(\Id+\lambda\,\J)^{-1}\right\|=\frac{1}{\sqrt{1+\lambda^2}}<\min\left(1,\frac1\lambda\right)\,.
}
\end{lemma}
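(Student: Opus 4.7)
The plan is to exploit the two key algebraic properties of $\J$: it is a rotation by $\pi/2$, so $\J^2=-\Id$, and it is skew-symmetric, so $\J^\top=-\J$ and $\J^\top\J=\Id$. These two facts together make $\Id+\lambda\J$ a normal matrix with an easily diagonalizable structure (its eigenvalues on $\CC^2$ being $1\pm\iD\lambda$), which is precisely what one needs for explicit inversion and norm computation.

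First, I would verify invertibility and the explicit inverse by a direct computation: expanding $(\Id+\lambda\J)(\Id-\lambda\J)=\Id-\lambda^2\J^2=(1+\lambda^2)\Id$. This immediately gives both the claimed formula for $(\Id+\lambda\J)^{-1}$ and the invertibility for every $\lambda\in\RR$ (in particular for $\lambda>0$).

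Next, to compute the operator norm, I would use $\|M\|^2=\|M^\top M\|$ with $M:=(\Id+\lambda\J)^{-1}=(1+\lambda^2)^{-1}(\Id-\lambda\J)$. Since $(\Id-\lambda\J)^\top=\Id+\lambda\J$, the same identity as above yields $M^\top M=(1+\lambda^2)^{-1}\Id$, hence $\|M\|=(1+\lambda^2)^{-1/2}$. Finally, the inequality $(1+\lambda^2)^{-1/2}<\min(1,1/\lambda)$ is immediate from $1+\lambda^2>\max(1,\lambda^2)$ for $\lambda>0$.

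There is no genuine obstacle here: the whole argument is two short lines of $2\times2$ matrix algebra, relying only on $\J^2=-\Id$ and $\J^\top=-\J$. The lemma is set up as a standalone stability ingredient that will be invoked repeatedly with $\lambda=\Dt/\eps^2$ in the subsequent discrete estimates, so the only care needed is to state the bound in a form that makes both the $\Dt\ll\eps^2$ and $\Dt\gg\eps^2$ regimes transparent—exactly what the $\min(1,1/\lambda)$ bound provides.
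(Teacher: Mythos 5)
Your proof is correct and essentially the same as the paper's: both derive the inverse from $\J^2=-\Id$ and compute the norm from the skew-symmetry/isometry of $\J$, the paper phrasing it as orthogonality of $\bfa$ and $\J\bfa$ while you phrase it as $(\Id-\lambda\J)^\top(\Id-\lambda\J)=(1+\lambda^2)\Id$, which is the identical computation.
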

\begin{proof}
The formula for the inverse is readily deduced from $\J^2=-\Id$. To compute its norm, note that, for any vector $\bfa$, $\bfa$ and $\J\bfa$ are orthogonal and $\|\J\bfa\|=\|\bfa\|$. From this, follows $\|\Id-\lambda\,\J\|=\sqrt{1+\lambda^2}$, hence the formula for $\Lambda_\lambda^{-1}$. The final estimate stems from $1+\lambda^2>\max(1,\lambda^2)$.
\end{proof}

Gathering consistency estimates from Lemma~\ref{l:1st_consistency} and stability information from Lemma~\ref{l:1st-J}, we now provide direct error bounds in Proposition \ref{prop:E1_y}, corresponding to the $\Dt/\eps^3$ part of estimates in Theorem~\ref{thm:error_estim}.

\begin{proposition}
\label{prop:E1_y}
Assume that $\bE\in W^{2,\infty}$. {Then, the error of the unique solution of the scheme \eqref{e:dNewton-1st} denoted by $(\bx_\eps^n,\bv_\eps^n,\by_\eps^n)_{n\geq 0}$, in the convergence to  $(\bx_\eps(t_n),\bv_\eps(t_n),\by_\eps(t_n))$ as the exact solution of the system \eqref{e:Newton}, is bounded as}
\begin{align}
\|\bx_\eps^n-\bx_\eps(t_n)\|
&\,\leq\,\|\by_\eps^n-\by_\eps(t_n)\|
+\eps\|\bv_\eps^n-\bv_\eps(t_n)\|\,\\[0.5em]
&\underset{{\scriptscriptstyle {\bE}}}{\lesssim}\,
\frac{\Dt}{\eps^3}\,t_n\,e^{2K_xt_n}
(1+\eps^2)\Big( \|\bv^0_\eps\|+\eps\,(1+t_{n})\Big)\,.
\nonumber
\end{align}
\end{proposition}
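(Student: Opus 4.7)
The plan is to establish a recursive bound on the composite error $E_n := \|\bfe_\by^n\| + \eps\|\bfe_\bv^n\|$, where $\bfe_\by^n := \by_\eps^n - \by_\eps(t_n)$ and $\bfe_\bv^n := \bv_\eps^n - \bv_\eps(t_n)$, then close it via the discrete Gr\"onwall lemma using the consistency bounds of Lemma~\ref{l:1st_consistency}. The first inequality in the statement is immediate from the triangle inequality applied to $\bx_\eps^n - \bx_\eps(t_n) = \bfe_\by^n + \eps\,\J\,\bfe_\bv^n$, using the isometry $\|\J\bu\| = \|\bu\|$. Existence and uniqueness of the numerical solution are a direct consequence of Lemma~\ref{l:1st-J}: computing $\bv_\eps^{n+1}$ in \eqref{e:dNewton-1st} amounts to inverting $\Id + \lambda\,\J$ with $\lambda := \Dt/\eps^2 > 0$, which is always invertible.

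To derive the recursion on $\bfe_\by$, I would subtract the $\by$-update \eqref{eq:dy_time} from its continuous counterpart, expressed via the consistency error \eqref{eq:tauy_1st}, and apply the Lipschitz bound on $\bE^\perp$, obtaining $\|\bfe_\by^{n+1}\| \leq (1 + K_x\Dt)\|\bfe_\by^n\| + K_x\,\Dt\,\eps\,\|\bfe_\bv^n\| + \Dt\,\|\bftau_\by^n\|$. For the velocity, I would rewrite \eqref{e:dNewton-1st} and the continuous identity derived from \eqref{eq:tauv_1st} both in the matrix form $(\Id + \lambda\,\J)\bfa = \bfb$, then subtract and left-multiply by $(\Id+\lambda\,\J)^{-1}$; invoking $\Lambda_\lambda^{-1}\leq 1$ from Lemma~\ref{l:1st-J} together with the Lipschitz bound on $\bE$, and weighting by $\eps$, yields $\eps\,\|\bfe_\bv^{n+1}\| \leq \eps\,\|\bfe_\bv^n\| + K_x\,\Dt\,\|\bfe_\by^n\| + K_x\,\Dt\,\eps\,\|\bfe_\bv^n\| + \eps\,\Dt\,\|\bftau_\bv^n\|$. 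Summing the two gives the clean recursion $E_{n+1} \leq (1 + 2K_x\Dt)\,E_n + \Dt\,(\|\bftau_\by^n\| + \eps\,\|\bftau_\bv^n\|)$.

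Discrete Gr\"onwall, applied with $E_0=0$ since the scheme is initialized from the exact data, then yields $E_n \leq \sum_{k=0}^{n-1}\Dt\,e^{2K_x(t_n - t_{k+1})}(\|\bftau_\by^k\| + \eps\,\|\bftau_\bv^k\|)$. Plugging in Lemma~\ref{l:1st_consistency}, the $\eps\,\|\bftau_\bv^k\|$ contribution dominates and produces the $\Dt/\eps^3$ and $(1+\eps^2)$ prefactors; a power of $e^{K_xt_{k+1}}$ from the consistency bound combines with the Gr\"onwall kernel into $e^{2K_xt_n - K_xt_{k+1}}\leq e^{2K_xt_n}$, so that summing $\Dt$ produces a factor $t_n$ and bounding the $k$-dependent polynomial $(\|\bv_\eps^0\| + \eps(1+t_{k+1}))$ by its value at $k+1=n$ yields exactly the claimed estimate.

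The principal subtlety to watch out for is the apparently singular $\Dt/\eps$ factor that arises when one applies the Lipschitz bound to $\eps^{-1}[\bE(t_n,\bx_\eps^n) - \bE(t_n,\by_\eps(t_n)+\eps\,\bv_\eps^\perp(t_n))]$ in the velocity step. It is the $\eps$-weighting built into the definition of $E_n$, combined with the uniform bound $\Lambda_\lambda^{-1}\leq 1$ that holds in every regime of $\lambda$, that tames this contribution into the benign $K_x\Dt$ coefficient seen above; working directly with unweighted errors, or with the variable $\bx_\eps$ in place of $\by_\eps$, would obscure this favorable cancellation and introduce a spurious $\eps^{-1}$ blow-up.
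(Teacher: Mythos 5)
Your proposal is correct and follows essentially the same route as the paper's proof: you use the weighted error $E_n=\|\bfe_{\by}^n\|+\eps\|\bfe_{\bv}^n\|$ (which is exactly the paper's $\|(\ber_{\by}^n,\ber_{\bv}^n)\|_\eps$), derive the same two stage estimates from the Lipschitz bound on $\bE$ and the stability bound $\Lambda_\lambda^{-1}\leq 1$ from Lemma~\ref{l:1st-J}, combine them into $E_{n+1}\leq(1+2K_x\Dt)E_n+\Dt\,\|(\bftau_{\by}^n,\bftau_{\bv}^n)\|_\eps$, and close with discrete Gr\"onwall and Lemma~\ref{l:1st_consistency}. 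The only cosmetic difference is that you write both the discrete and continuous velocity updates in the form $(\Id+\lambda\J)\bfa=\bfb$ before subtracting, whereas the paper first inverts and writes $\bv_\eps^{n+1}=(\Id+\lambda\J)^{-1}(\cdots)$; these are equivalent manipulations.
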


\begin{remark}
As implicit in the foregoing statement, due to the fact that nonlinear terms only depend on $(\by,\bv)$ through $\bx=\by+\eps\bv^\perp$, it is expedient to use the norm $\|\cdot\|_{\eps}$ defined by
\begin{equation}\label{d:esp-norm}
\|(\by,\bv)\|_{\eps}:=\|\by\|+\eps\|\bv\|\,.
\end{equation} 
\end{remark}

\begin{proof}
For the sake of conciseness, we {denote numerical errors by}
\[
\ber_{\by}^n:=\by_\eps^n-\by_\eps(t_n)\,,\qquad
\ber_{\bv}^n:=\bv_\eps^n-\bv_\eps(t_n)\,.
\]

By reformulating the velocity update in \eqref{e:dNewton-1st} as 
\begin{align*}
\bv_\eps^{n+1}&=\,\ds\left(\Id+\frac{\Dt}{\eps^2}\J\right)^{-1}\left(\bv_\eps^n
+\frac{\Dt}{\eps}\bE(t_n,\by_\eps^n\,+\,\eps\,(\bv_\eps^n)^\perp)\right),
\end{align*}
and using {the stability of the implicit operator} in Lemma~\ref{l:1st-J}, we obtain the following bounds for $\|\ber_{\by}^{n+1}\|$ and $\|\ber_{\bv}^{n+1}\|$ for all $n\geq 0$:
{\mmn{
\ds\|\ber_{\by}^{n+1}\| \,&\leq\, 
\|\ber_{\by}^{n}\|
\,+\,K_{x}\Dt\,\|(\ber_{\by}^{n},\ber_{\bv}^{n})\|_\eps
\,+\, \Dt\,\|\bftau_{\by}^n\|,\\
\ds\|\ber_{\bv}^{n+1}\|\,&\leq  \Lambda_\lambda^{-1}\left(
\|\ber_{\bv}^n\|\,+\,K_x\dfrac{\Dt}{\eps}\,\|(\ber_{\by}^{n},\ber_{\bv}^{n})\|_\eps
\,+\,\Dt\ds\|\bftau_{\bv}^n\| \right),
}
which, thanks to Lemma~\ref{l:1st-J}, yields,}
\[
\|(\ber_{\by}^{n+1},\ber_{\bv}^{n+1})\|_\eps
\,\leq\,(1+2K_x\Dt)\|(\ber_{\by}^{n},\ber_{\bv}^{n})\|_\eps
+\Dt\,\|(\bftau_{\by}^{n},\bftau_{\bv}^{n})\|_\eps\,,\qquad n\geq0\,.
\]
{Note that $\|(\ber_{\by}^{0},\ber_{\bv}^{0})\|_\eps=0$. Thus, by iteration, we deduce that for any $n>0$,}
\begin{align*}
\|(\ber_{\by}^{n},\ber_{\bv}^{n})\|_\eps
&\ds\leq
\sum_{\ell=0}^{n-1}
(1+2K_x\Dt)^{n-\ell-1}\|(\bftau_{\by}^{\ell},\bftau_{\bv}^{\ell})\|_\eps\ \Dt\,,\\
&\ds\leq
\sum_{\ell=0}^{n-1}
e^{2\,K_x\,(t_{n}-t_{\ell+1})}\|(\bftau_{\by}^{\ell},\bftau_{\bv}^{\ell})\|_\eps\ \Dt\,.
\end{align*}
The proof is, then, concluded by applying {the consistency result} in Lemma~\ref{l:1st_consistency}.
\end{proof}

\subsubsection{$\eps$-independent classical estimate for the asymptotic model} 
Here, we discuss the classical convergence analysis of the asymptotic numerical model \eqref{e:dNgyro-1st} to the guiding center equation \eqref{e:Ngyro}.

\begin{proposition}
\label{prop:E1_gc}
Assume that $\bE\in W^{1,\infty}$. Then, it holds 
\mm{
\|\bx^n-\bx(t_n)\|
&\leq \frac{\Dt}{2}\,t_n\,e^{K_x\,t_n}\,\big(K_t+K_xK_0\big)\,,
}
when $(\bx^{n})_{n\in\N}$ and $\bx(t_n)$ solve respectively \eqref{e:dNgyro-1st} and \eqref{e:Ngyro}, with the same initial {data}.
\end{proposition}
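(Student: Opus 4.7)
The plan is to carry out a textbook consistency-plus-stability analysis for the explicit Euler scheme \eqref{e:dNgyro-1st}, completely analogous in structure to (but simpler than) the proof of Proposition~\ref{prop:E1_y}.

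First I would define the local truncation error
\[
\bftau^n\,:=\,\frac{\bx(t_{n+1})-\bx(t_n)}{\Dt}\,+\,\bE^\perp(t_n,\bx(t_n))\,,
\]
and bound it via Taylor's theorem by $\tfrac{\Dt}{2}\max_{[t_n,t_{n+1}]}\|\bx''\|$. Since $\bx$ solves \eqref{e:Ngyro}, differentiating once more yields
\[
\bx''(t)\,=\,-\d_t\bE^\perp(t,\bx(t))\,+\,\dD_\bx\bE^\perp(t,\bx(t))\,\bE^\perp(t,\bx(t))\,,
\]
so $\|\bx''\|_{L^\infty}\leq K_t+K_xK_0$, giving $\|\bftau^n\|\leq \tfrac{\Dt}{2}(K_t+K_xK_0)$.

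Next, the stability step: setting $\ber^n:=\bx^n-\bx(t_n)$ and subtracting the consistency identity from \eqref{e:dNgyro-1st}, I get
\[
\ber^{n+1}\,=\,\ber^n\,-\,\Dt\big(\bE^\perp(t_n,\bx^n)-\bE^\perp(t_n,\bx(t_n))\big)\,-\,\Dt\,\bftau^n\,,
\]
hence, using the Lipschitz bound $K_x$ on $\bE$,
\[
\|\ber^{n+1}\|\,\leq\,(1+K_x\Dt)\,\|\ber^n\|\,+\,\Dt\,\|\bftau^n\|\,.
\]
Since $\ber^0=0$ (same initial data), iterating and comparing $(1+K_x\Dt)^{n-\ell-1}\leq e^{K_x(t_n-t_{\ell+1})}\leq e^{K_xt_n}$ yields
\[
\|\ber^n\|\,\leq\,\Dt\sum_{\ell=0}^{n-1}e^{K_x(t_n-t_{\ell+1})}\,\|\bftau^\ell\|\,\leq\,n\,\Dt\cdot e^{K_xt_n}\cdot\tfrac{\Dt}{2}(K_t+K_xK_0)\,,
\]
which, together with $n\Dt=t_n$, is exactly the claimed bound.

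There is no serious obstacle here: the result is the classical first-order convergence estimate for the forward Euler method on a Lipschitz ODE, and all ingredients (Lipschitz regularity of $\bE$, $W^{1,\infty}$ assumption ensuring $\bx''\in L^\infty$ with the stated pointwise bound, trivial initial error, discrete Gr\"onwall) are completely standard. The only thing worth being careful about is keeping track of constants so that the final expression matches the stated form $\tfrac{\Dt}{2}\,t_n\,e^{K_xt_n}(K_t+K_xK_0)$, in particular using the sharp bound $\sum_{\ell=0}^{n-1}e^{-K_xt_{\ell+1}}\leq n$ rather than the cruder integral estimate used elsewhere in the paper.
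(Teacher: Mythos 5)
Your proof is correct and is exactly the standard forward-Euler consistency-plus-stability argument that the paper itself invokes (the paper omits the details, stating only that the proof is a simpler, $\eps$-independent version of the one for Proposition~\ref{prop:E1_y}). The truncation-error bound $\|\bx''\|_{L^\infty}\le K_t+K_xK_0$, the Lipschitz stability recursion, the zero initial error, and the discrete Gr\"onwall iteration all match what the paper intends.
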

\begin{proof}
{The proof is omitted as, thanks to the $\eps$-independence of the asymptotic model, it boils down to a simpler version of the proof of Proposition~\ref{prop:E1_y}, indeed, the quite classical error analysis of forward Euler integration.}
\end{proof}

\subsection{Asymptotic estimates}
\label{sec:3.3}
To obtain a bound on the asymptotic error $\|\bx^n-\bx_\eps^n\|$, thus a discrete counterpart of Theorem \ref{thm:second_estim}, we refine below the analysis of \cite[Section~4.1]{FR16}.

The first step is to obtain $\eps$-uniform, local-in-time estimates, thus a discrete counterpart of Lemma~\ref{lem:apriori}. To do so, it is convenient to introduce $(\bz^n_\eps)_{n\geq 1}$ as the {discrete analogue of} the auxiliary variable $\bz_\eps$, \ie,
\be\label{def:zn_discrete}
\bz_\eps^n\,:=\,
\bv_\eps^n\,+\,\eps\, \bE^\perp(t_{n-1},\bx_\eps^{n-1}),\qquad\,\, n\,\geq\,1\,.
\ee
Then, from \eqref{e:dNewton-1st}, it follows
\be
\label{eq:zn_discrete}
\ds\frac{\bz_\eps^{n+1}-\bz_\eps^{n}}{\Dt }\,=\,\ds
-\,\frac{1}{\eps^2}(\bz_\eps^{n+1})^\perp
\,+\,\eps\,\frac{\bE^\perp(t_n,\bx_\eps^n)-\bE^\perp(t_{n-1},\bx_\eps^{n-1})}{\Dt},  
\quad{\rm for}\,\, n\geq 1\,.
\ee
Thus, the following lemma provides the required estimates.

\begin{lemma}
\label{lem:disct_zbound}
Assume  that $\bE\in W^{1,\infty}$ and consider \eqref{e:dNewton-1st}. Then, the variable $(\bz^n_\eps)_{n\geq 1}$ defined in \eqref{def:zn_discrete} and the velocity computed by the scheme are bounded, for $n\geq 1$, as
\begin{align*}
\begin{cases}
\ds\|\bz_\eps^n\|
\,\leq\,
e^{K_{x}\,t_n}
\big(\|\bv^0_\eps\|+2\,\eps\,K_0\big)
+\eps\,t_n\,e^{K_{x}\,t_n}\left(K_t \,+\,K_{x}\,K_0\right)\,, 
\\[0.9em]
\ds\|\bv_\eps^n\|\,\leq\,\|\bz_\eps^n\|\,+\,K_0\,\eps\,.
\end{cases}
\end{align*}
\end{lemma}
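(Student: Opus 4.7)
The proof follows the same scheme as the continuous analogue in Lemma~\ref{lem:apriori}, that is, it exploits the near-oscillatory dynamics of $\bz_\eps^n$ whose normal direction is (almost) preserved by the singular rotation, while the perturbation is of order $\eps$. However, unlike the continuous setting where one could take a scalar product to cancel out the singular skew-symmetric term, here the cancellation will come from the stability of the inverse of $\Id+\lambda\J$ provided by Lemma~\ref{l:1st-J}.

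First, I would rewrite the recursion \eqref{eq:zn_discrete} in implicit form as
\[
(\Id+\lambda\,\J)\,\bz_\eps^{n+1}
\,=\,\bz_\eps^n+\eps\,\Big(\bE^\perp(t_n,\bx_\eps^n)-\bE^\perp(t_{n-1},\bx_\eps^{n-1})\Big),
\qquad \lambda=\frac{\Dt}{\eps^2}\,,
\]
and invoke Lemma~\ref{l:1st-J} together with $\Lambda_\lambda^{-1}\leq 1$ to obtain
\[
\|\bz_\eps^{n+1}\|\leq \|\bz_\eps^n\|+\eps\,\|\bE^\perp(t_n,\bx_\eps^n)-\bE^\perp(t_{n-1},\bx_\eps^{n-1})\|\,.
\]
The right-hand increment is then bounded through the Lipschitz estimates on $\bE$ and the first equation of the scheme~\eqref{e:dNewton-1st}: indeed $\bx_\eps^n-\bx_\eps^{n-1}=(\Dt/\eps)\,\bv_\eps^n$ so that
\[
\|\bE^\perp(t_n,\bx_\eps^n)-\bE^\perp(t_{n-1},\bx_\eps^{n-1})\|
\,\leq\, K_t\,\Dt+\frac{K_x\,\Dt}{\eps}\,\|\bv_\eps^n\|\,.
\]
Using the definition \eqref{def:zn_discrete}, one has $\|\bv_\eps^n\|\leq\|\bz_\eps^n\|+\eps\,K_0$, which yields the affine recursion
\[
\|\bz_\eps^{n+1}\|\leq (1+K_x\Dt)\,\|\bz_\eps^n\|+\eps\,\Dt\,(K_t+K_x\,K_0)\,.
\]

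Next, I would handle the initialization $\|\bz_\eps^1\|$, which is the slightly delicate point. From the velocity update, one has $\bv_\eps^1=(\Id+\lambda\J)^{-1}(\bv_\eps^0+(\Dt/\eps)\,\bE(t_0,\bx_\eps^0))$, so Lemma~\ref{l:1st-J} yields
\[
\|\bv_\eps^1\|\,\leq\,\Lambda_\lambda^{-1}\|\bv_\eps^0\|+\Lambda_\lambda^{-1}\,\frac{\Dt}{\eps}\,K_0
\,\leq\,\|\bv_\eps^0\|+\eps\,K_0\,,
\]
where I used both $\Lambda_\lambda^{-1}\leq 1$ and $\Lambda_\lambda^{-1}\leq 1/\lambda=\eps^2/\Dt$ to bound the forcing term uniformly in $\Dt/\eps^2$. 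Adding the $\eps\,K_0$ coming from the correction in \eqref{def:zn_discrete} gives $\|\bz_\eps^1\|\leq \|\bv_\eps^0\|+2\,\eps\,K_0$, which accounts precisely for the factor $2$ appearing in the claimed bound (and is the discrete analogue of the fact that $\bz_\eps^0$ and $\bv_\eps^0$ differ by a term of order $\eps$).

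The last step is to iterate the affine recursion: using $(1+K_x\Dt)^{n-1}\leq e^{K_x\,t_{n-1}}\leq e^{K_x\,t_n}$ and $\Dt\sum_{k=0}^{n-2}(1+K_x\Dt)^k\leq t_n\,e^{K_x\,t_n}$, a discrete Grönwall lemma provides
\[
\|\bz_\eps^n\|\,\leq\,e^{K_x\,t_n}\,(\|\bv_\eps^0\|+2\eps\,K_0)+\eps\,t_n\,e^{K_x\,t_n}\,(K_t+K_x\,K_0)\,,
\]
and the bound on $\|\bv_\eps^n\|$ follows immediately from $\bv_\eps^n=\bz_\eps^n-\eps\,\bE^\perp(t_{n-1},\bx_\eps^{n-1})$. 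The main (mild) obstacle is thus the bookkeeping of the $\Lambda_\lambda^{-1}$ factors at the initialization stage, which must be estimated uniformly in the regime $\Dt/\eps^2$ to avoid blow-up; beyond this, the argument parallels the continuous proof of Lemma~\ref{lem:apriori} step by step.
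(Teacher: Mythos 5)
Your proof is correct and follows essentially the same route as the paper's: bound the increment of $\bE^\perp$ via the scheme's $\bx_\eps$-update and Lipschitz constants, use Lemma~\ref{l:1st-J} ($\Lambda_\lambda^{-1}\leq 1$) to absorb the singular rotation in the $\bz_\eps$-recursion, treat the initialization $\|\bz_\eps^1\|\leq\|\bv_\eps^0\|+2\eps K_0$ separately using both bounds $\Lambda_\lambda^{-1}\leq\min(1,1/\lambda)$, and conclude by iterating the resulting affine recursion. The only (cosmetic) difference is that you are somewhat more explicit than the paper about the two distinct uses of the $\Lambda_\lambda^{-1}$-bound in the first step.
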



\begin{proof}
First, we observe that for any $n\geq 1$, 
\mmn{
\|\bE^\perp(t_n,\bx_\eps^n)-\bE^\perp(t_{n-1},\bx_\eps^{n-1})\|
&\leq\,\bigg( K_t\,+\,K_x\frac{\|\bv_\eps^n\|}{\eps}\bigg)\,\Dt\,,\\
&\leq\,\bigg(K_t+K_x\left(\frac{\|\bz_\eps^n\|}{\eps}+K_0\right)\bigg)\,\Dt\,,
} 
which, combined with the $\bz_\eps$-update \eqref{eq:zn_discrete} and Lemma~\ref{l:1st-J}, yields
\begin{align*}
\|\bz_\eps^{n+1}\| &\leq\,\left(1+K_x\,\Dt\right)\, \|\bz_\eps^{n}\| \,+\,
  \eps\,\Dt\, \left( K_t \,+\, K_x\, K_0 \right)\,, \qquad \text{ for } n\geq 1.
\end{align*}
Iterating on the foregoing estimate implies
\begin{align}
\label{eq:zbound_dis_1st}
\|\bz_\eps^{n}\| &\leq\,e^{K_x(t_n-t_1)}\,\|\bz_\eps^{1}\| \,+\,
\eps\,(t_{n}-t_1)e^{K_x(t_n-t_1)}\left( K_t \,+\, K_x\, K_0 \right)\,, \qquad \text{ for } n\geq1\,,
\end{align}
which gives the final estimate, since $\|\bz_\eps^{1}\|\leq \|\bv^1_\eps\|+\eps\,K_0$ and $\bv^1_\eps$ is bounded from \eqref{e:dNewton-1st} and Lemma~\ref{l:1st-J} as
\[
\|\bv^1_\eps\|\leq \|\bv^0_\eps\|\,+\,\eps\,K_0\,.
\]
\end{proof}

With $\eps$-uniform bounds in hands, we are in a position to prove asymptotic estimates.

\begin{proposition} 
\label{prop:disct_err}
\begin{enumerate}[(i)]
\item
Assume $\bE\in W^{1,\infty}$. The difference between flows of \eqref{e:dNewton-1st} and \eqref{e:dNgyro-1st} satisfies, for any $\eps>0$, 
\begin{align*}
\|\bX_\eps^{\Dt}(t_n,0,\bx_\eps^0,\bv_\eps^0)-\bX^{\Dt}(t_n,0,\bx_\eps^0)\|
&\ds\underset{{\scriptscriptstyle {\bE}}}{\lesssim}
\eps\,e^{K_xt_n}\,(1+\,t_n^2)\,\left(\|\bv_\eps^0\|+\eps\right)\,.
\end{align*}
\item 
Assume $\bE\in W^{2,\infty}$. Then, for any $\eps>0$, 
\begin{align*}
\|\bY_\eps^{\Dt}(t_n,0,\bx_\eps^0,\bv_\eps^0)
\,-\,
\bX^{\Dt}(t_n,0,\bx_\eps^0-\eps(\bv_\eps^0)^\perp)\|
\ds\underset{{\scriptscriptstyle {\bE}}}{\lesssim}
e^{2K_xt_n}\,(1+t_n^4)
\,\big(\eps\,\Dt\,\|\bv_\eps^0\|+\eps^2\,(1+\|\bv_\eps^0\|^2+\eps^2)\big)\,.
\end{align*}
\end{enumerate}
\end{proposition}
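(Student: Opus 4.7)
My overall strategy mirrors the continuous proof of Theorem~\ref{thm:second_estim} carried out in \S\ref{sec:2.2}, but with all time derivatives and integrals replaced by their discrete analogues and with discrete Gr\"onwall inequalities in place of the continuous one. The $\eps$-uniform bounds of Lemma~\ref{lem:disct_zbound} play the role of Lemma~\ref{lem:apriori} and are what make the argument work.

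For part~(i), the starting point is \eqref{eq:dy_time}, which reads
\[
\frac{\by_\eps^{n+1}-\by_\eps^n}{\Dt}\,=\,-\bE^\perp(t_n,\by_\eps^n+\eps(\bv_\eps^n)^\perp)\,,
\]
an update which differs from the guiding center scheme \eqref{e:dNgyro-1st} only through the $\eps(\bv_\eps^n)^\perp$ shift inside the argument of $\bE^\perp$. Subtracting the scheme \eqref{e:dNgyro-1st} started from $\bx_\eps^0$ and using the Lipschitz bound on $\bE$ yields
\[
\|\by_\eps^{n+1}-\bx^{n+1}\|\,\leq\,(1+K_x\Dt)\,\|\by_\eps^{n}-\bx^{n}\|\,+\,\eps K_x\Dt\,\|\bv_\eps^n\|\,.
\]
Applying discrete Gr\"onwall, controlling $\|\bv_\eps^n\|$ by Lemma~\ref{lem:disct_zbound}, and accounting for the initial mismatch $\|\by_\eps^0-\bx_\eps^0\|=\eps\|\bv_\eps^0\|$ together with the identity $\bx_\eps^n=\by_\eps^n+\eps(\bv_\eps^n)^\perp$ produces the desired bound.

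For part~(ii), I will follow the discrete version of the algebraic manipulation in \S\ref{sec:2.2}. Taylor expanding the right-hand side of \eqref{eq:dy_time} in the small shift $\eps(\bv_\eps^n)^\perp$ gives
\[
\frac{\by_\eps^{n+1}-\by_\eps^n}{\Dt}\,=\,-\bE^\perp(t_n,\by_\eps^n)\,-\,\eps\,\dD_\bx\bE^\perp(t_n,\by_\eps^n)\,(\bv_\eps^n)^\perp\,+\,\eps^2\,\Theta_\eps^n\,,
\]
with a remainder controlled by $K_{xx}\|\bv_\eps^n\|^2$. To remove the order-$\eps$ term, I use the velocity update of \eqref{e:dNewton-1st} rewritten as
\[
(\bv_\eps^{n+1})^\perp\,=\,\eps\,\bE(t_n,\bx_\eps^n)\,-\,\frac{\eps^2}{\Dt}(\bv_\eps^{n+1}-\bv_\eps^n)\,,
\]
which after an index shift expresses $(\bv_\eps^n)^\perp$ as an order-$\eps$ field term plus an order-$\eps^2$ discrete time increment. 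Substituting this yields an update of the form
\[
\frac{\by_\eps^{n+1}-\by_\eps^n}{\Dt}\,=\,-\bE^\perp(t_n,\by_\eps^n)\,-\,\eps^2\,\dD_\bx\bE^\perp(t_n,\by_\eps^n)\,\bE(t_{n-1},\bx_\eps^{n-1})\,+\,\eps^3\,\dD_\bx\bE^\perp(t_n,\by_\eps^n)\,\frac{\bv_\eps^n-\bv_\eps^{n-1}}{\Dt}\,+\,\cO(\eps^2)\,,
\]
where the last displayed term is the discrete analogue of $\eps^3\,\dD_\bx\bE^\perp(t,\by_\eps)\bv_\eps'$ that required an integration by parts in the continuous case. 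I will handle it by Abel summation: writing it as a discrete divergence $\Dt^{-1}(\bG_\eps^{n+1}-\bG_\eps^n)$ plus a correction $\bR_\eps^n$ of order $\eps^3/\Dt$ acting on the increments of $\dD_\bx\bE^\perp(t_n,\by_\eps^n)$, where the latter increments are of size $\Dt$ thanks to Lemma~\ref{lem:disct_zbound}. Moving $\bG_\eps^n$ to the left-hand side, I obtain that the modified variable $\tby_\eps^n:=\by_\eps^n-\eps^3\,\dD_\bx\bE^\perp(t_{n-1},\by_\eps^{n-1})\,\bv_\eps^{n-1}$ satisfies the guiding center scheme up to an inhomogeneity of size $\cO(\eps^2)$ per step. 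Comparing with \eqref{e:dNgyro-1st} started from $\bx_\eps^0-\eps(\bv_\eps^0)^\perp$ and applying discrete Gr\"onwall, exactly as in the proof of Proposition~\ref{prop:E1_y}, then produces the announced bound after reabsorbing the $\cO(\eps^3)$ correction $\tby_\eps^n-\by_\eps^n$ using Lemma~\ref{lem:disct_zbound}.

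The main technical obstacle will be the Abel summation step and the careful tracking of the various $\Dt$, $\eps^a t_n^b e^{cK_xt_n}$ factors. The delicate point is to ensure that the $\eps^3/\Dt$ factor produced by summation by parts is exactly compensated by a factor $\Dt$ coming from the increment of $\dD_\bx\bE^\perp(t_n,\by_\eps^n)$, which requires bounds on discrete time increments of $\bE^\perp$ composed with $\bx_\eps^n$ of the form $\|\bE^\perp(t_n,\bx_\eps^n)-\bE^\perp(t_{n-1},\bx_\eps^{n-1})\|\lesssim\Dt(K_t+K_x\|\bv_\eps^n\|/\eps)$ already used in the proof of Lemma~\ref{lem:disct_zbound}; the factor $1/\eps$ there is precisely what balances one of the $\eps$'s and produces the final $\eps^2$-scaling. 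Handling boundary terms from the Abel summation and the shift between the indices $n-1$ and $n$ in the substituted momentum equation only introduces lower-order contributions that are absorbed in the stated right-hand side.
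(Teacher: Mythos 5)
Your plan is essentially the paper's proof: part~(i) is the one-step Lipschitz recursion plus discrete Gr\"onwall, and part~(ii) uses the Taylor expansion of the $\by_\eps$-update \eqref{eq:dy_time}, substitution of the shifted velocity update from \eqref{e:dNewton-1st} to eliminate the $\cO(\eps)$ term, and a discrete summation by parts (your Abel-summation step is exactly the telescoping identity the paper writes), followed by discrete Gr\"onwall and Lemma~\ref{lem:disct_zbound}. Two small corrections are worth flagging in your closing paragraph. First, the summation-by-parts remainder involves increments of $\dD_\bx\bE^\perp(t_n,\by_\eps^n)$, not of $\bE^\perp(t_n,\bx_\eps^n)$; these are of size $\Dt K_0$ with no $1/\eps$ factor, since the $\by_\eps$-update \eqref{eq:dy_time} gives $\|\by_\eps^n-\by_\eps^{n-1}\|\leq K_0\,\Dt$ directly (this does not come from Lemma~\ref{lem:disct_zbound}), so that remainder contributes only at order $\eps^3$ --- the dominant $\eps^2$ contributions come from the field term $\eps^2\dD_\bx\bE^\perp\bE$ and the Taylor remainder $\eps^2\Theta_\eps$, not from the $1/\eps$ balance you describe. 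Second, the first step $n=0\to1$, where the index-shifted velocity substitution is unavailable, is not a harmless lower-order boundary contribution: it produces precisely the $\eps\,\Dt\,\|\bv_\eps^0\|$ term that appears explicitly in the statement, and is the very term that the implicit first step of Proposition~\ref{prop:disct_err_stepone} and the modified scheme of \S\ref{sec:4} are designed to remove.
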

\begin{proof}
To obtain the {first estimate, we subtract \eqref{e:dNgyro-1st} from \eqref{eq:dy_time} and make a summation from $0$ to $(n-1)$ to get}
\begin{align*}
\by_\eps^n-\bx^n&=\by_\eps^0-\bx^0
\,-\,\Dt\sum_{\ell=0}^{n-1}(\bE^\perp(t_\ell,\bx_\eps^\ell)-\bE^\perp(t_\ell,\bx^\ell))\,, \qquad \text{ for } n\geq0,
\end{align*}
with $(\bx^n)_{n\geq0}:=(\bX^{\Dt}(t_n,0,\bx_\eps^0))_{n\geq0}$. Thus,
\begin{align*}
\|\bx_\eps^{n}-\bx^{n}\|
\,\leq\,
\eps\,(\|\bv_\eps^n\|+\|\bv_\eps^0\|)
+K_x\Dt\sum_{\ell=0}^{n-1}\|\bx_\eps^{\ell}-\bx^{\ell}\|\,,\qquad \text{ for }n\geq0\,.
\end{align*}

Now we use the discrete Gr\"onwall lemma, in the form that
\begin{subequations}
\mm{\label{eq:dgronwall1}
A_n\leq a_n+K\Dt\sum_{\ell=0}^{n-1}A_\ell, \qquad \text{ for } n\geq0,
}
($A$, $a$ and $K$ being non-negative) implies 
\mm{\label{eq:dgronwall2}
 A_n&\leq a_n+K\Dt\sum_{\ell=0}^{n-1} (1+K\Dt)^{n-(\ell+1)} a_\ell
\leq a_n+K\Dt\sum_{\ell=0}^{n-1} e^{K(t_{n}-t_{\ell+1})} a_\ell\,, \qquad \text{ for } n\geq0.
}
\end{subequations}
{Following similar steps as of the continuous case, this leads to the  estimate
\begin{align*}
\|\bx_\eps^{n}-\bx^{n}\|
&\leq\, \eps\,e^{K_xt_n}\,(1+K_x\,t_n)\,
\left(\|\bv_\eps^{0}\|
+\max_{0\leq \ell\leq n}(e^{-K_x t_{\ell}}\|\bv_\eps^{\ell}\|)\right),
&n\geq 0\,,
\end{align*}
which proves the first inequality, thanks to the velocity estimate in Lemma~\ref{lem:disct_zbound}.}

As in continuous case, the proof of the second estimate requires significantly more algebraic manipulations. One starts with the Taylor expansion of the right hand side of \eqref{eq:dy_time}, that is, 
\begin{align}
\label{eq:dy_time-bis}
\frac{\by_\eps^{n+1}-\by_\eps^n}{\Dt}&=-\,\bE^\perp(t_n,\by_\eps^n)\,-\,\eps\,\dD_x\bE^\perp(t_n,\by_\eps^n)(\bv_\eps^{n})^\perp
+\eps^2\Theta_\eps(t_n,\by_\eps^n,\bv_\eps^n)\,,
&n\geq 0\,,
\end{align}
with {the remainder term} $\Theta_\eps$ such that $\|\Theta_\eps(t,\by,\bv)\|\leq \tfrac12 K_{xx}\|\bv\|^2$. So as to rewrite the linear term  $\bv_\eps^n$ in \eqref{eq:dy_time-bis} using again \eqref{e:dNewton-1st}, we observe that, for $n\geq1$, 
\begin{alignat*}{2}
\dD_x\bE(t_n,\by_\eps^n)(\bv_\eps^{n})^\perp
&\,=&&\,-\eps^2\,\dD_x\bE(t_n,\by_\eps^n)\left(\frac{\bv_\eps^{n}-\bv_\eps^{n-1}}{\Dt}\,-\,\frac{1}{\eps}\bE(t_{n-1},\bx_\eps^{n-1})\right)\,,
\\
&\,=&&\,-\frac{\eps^2}{\Dt}\left(\dD_x\bE(t_{n},\by_\eps^{n})\bv_\eps^{n}-\dD_x\bE(t_{n-1},\by_\eps^{n-1})\bv_\eps^{n-1}\right)
+\eps\,\dD_{\bx}\bE(t_n,\by_\eps^n)\,\bE(t_{n-1},\bx_\eps^{n-1})
\\
&\,&&+\,\frac{\eps^2}{\Dt}\left(\dD_x\bE(t_{n},\by_\eps^{n})-\dD_x\bE(t_{n-1},\by_\eps^{n-1})\right)\,\bv_\eps^{n-1}\,,
\end{alignat*}
whose last term can be bounded, using \eqref{eq:dy_time}, as 
\[
\|\dD_x\bE(t_{n},\by_\eps^{n})-\dD_x\bE(t_{n-1},\by_\eps^{n-1})\|
\leq \Dt\,\left(K_{tx}+K_{xx}\,K_0\right)\,, \qquad \text{ for } n\geq1.
\]
{Then, for all $n\geq 0$, we denote $\by^n:=\bX^{\Dt}(t_n,0,\bx_\eps^0-\eps(\bv_\eps^0)^\perp)$, and subtract \eqref{eq:dy_time-bis} from the equation satisfied by $(\by^n)_{n\geq0}$, given by \eqref{e:dNgyro-1st}. Inserting {the latter reformulation} in \eqref{eq:dy_time-bis} and summing give}
\[
\|\by_\eps^n-\by^n\|
\,\leq\,\|\by_\eps^1-\by^1\|+\eps^2r_n\,+\,K_x\Dt\,\sum_{\ell=1}^{n-1}\|\by_\eps^\ell-\by^\ell\|\,, \qquad \text{ for } n\geq 1,
\]
with $r_n$ defined as
\begin{align*}
r_n:= \,\,&
\eps\,K_x\,(\|\bv_\eps^{n-1}\|+\|\bv_\eps^{1}\|)
\,+\,K_x\,K_0\,(t^{n}-t^1)\\
&\,+\,\eps\,\Dt\,(K_{tx}+K_{xx}\,K_0)\sum_{\ell=1}^{n-1}\|\bv_\eps^{\ell-1}\|
\,+\,\Dt\,\frac{K_{xx}}{2}\sum_{\ell=1}^{n-1}\|\bv_\eps^{\ell}\|^2\,.
\end{align*}
{So, the discrete Gr\"onwall lemma yields} 
\begin{align*}
\|\by_\eps^{n}-\by^{n}\|
&\leq\, e^{K_x(t_n-t_1)}\,(1+K_x\,(t_n-t_1))\,
\left(\|\by_\eps^{1}-\by^{1}\|
+\eps^2\,{\max_{2\leq \ell\leq n}(e^{-K_x(t_{\ell}-t_1)}r_\ell)}\right),
&n\geq 1\,.
\end{align*}
{Moreover, the $\by_\eps$-update \eqref{eq:dy_time} implies that, for the first step,}
\[
\|\by_\eps^{1}-\by^{1}\|\,\leq\,K_x\Dt\,\eps\,\|\bv_\eps^0\|\,.
\]
{The proof is, then, concluded by combining this estimate with the bound for the velocity in Lemma~\ref{lem:disct_zbound}.}
\end{proof}

\subsection{Proofs of Theorem~\ref{thm:error_estim} and Corollary~\ref{cor:density_error-estim}}
\label{sec:3.4}

{Lemma \ref{l:1st-J} confirms the unique solvability of the scheme,
  \ie, the matrix $\Id+\lambda \J$ is invertible, so the implicit part
  provides a unique velocity update. As mentioned hereinbefore, the
  error estimates in Theorem~\ref{thm:error_estim} is obtained
  by taking the minimum of the estimates from
  Proposition~\ref{prop:E1_y} and the sum of estimates in
  Theorem~\ref{thm:second_estim} and Propositions~\ref{prop:E1_gc}
  and~\ref{prop:disct_err}. Indeed since  $\eps\,\Dt\lesssim
  \eps^2+\Dt^2 \leq \eps^2 + t_n\Dt $ for $n\geq 1$,  the presence in Proposition~\ref{prop:disct_err} of
  an $\cO(\eps\,\Dt)$-term beside the expected
  $\cO(\eps^2)$-term does not deteriorate the final error estimate.

Corollary~\ref{cor:density_error-estim} is then deduced from Theorem~\ref{thm:error_estim} exactly as Corollary~\ref{cor:density_estim} {was concluded} from Theorem~\ref{thm:second_estim}. For a more abstract version, see \eg{}
\cite[Proposition~2.1]{FR18}.

\begin{remark}
 {The  $\cO(\eps\,\Dt)$-term in Proposition~\ref{prop:disct_err} may be tracked down to the initial step of the scheme \eqref{e:dNewton-1st}.} As already pointed out in \cite[Remark 2.6]{FR17}, similar issues in the asymptotic error rates occur, in general, for higher-order schemes and they do impact the numerical convergence rates. Hence the need to understand how to fix this --- here harmless --- issue. To some extent, the problem may be cured by modifying the initial step of the scheme, \eg, with either a small time step or a fully-implicit treatment. An inspection of the proof of Proposition~\ref{prop:disct_err} shows that, here, setting the initial time step of size $(\Dt)_0\lesssim \eps$ would be sufficient {and resolves the issue}. The analysis of the effect of an implicit treatment requires more work and is discussed in the next section.
\end{remark}

\subsection{Variant with a fully-implicit first step}
{In this section, we discuss the gain in the $\by$-estimate of Proposition~\ref{prop:disct_err} obtained by modifying the first step of the scheme \eqref{e:dNewton-1st} into a fully-implicit version.} We consider the scheme obtained by combining
\mm{
 \label{e:dNewton-1st_stepone}
\begin{cases}
\ds\frac{\bx_\eps^{1}-\bx_\eps^{0}}{\Dt }&=\,\ds\frac{\bv_\eps^{1}}{\eps},
\\[1em]
\ds\eps\,\frac{\bv_\eps^{1}-\bv_\eps^{0}}{\Dt }&=\,\ds
 \bE(t_1,\bx_\eps^1)\,-\,\frac{(\bv_\eps^{1})^\perp}{\varepsilon}.
\end{cases}
}
with \eqref{e:dNewton-1st} for $n\geq1$. Accordingly, the expected asymptotic scheme is the combination of one implicit Euler step followed by explicit Euler steps, {that is,}
\be
\label{e:dNgyro-1st_stepone}
\begin{cases}
\dfrac{\bx^{1}-\bx^{0}}{\Dt }&=\,-\bE^\perp(t_1,\bx^1)\,, 
\\[1.1em]
\dfrac{\bx^{n+1}-\bx^{n}}{\Dt }&=\,-\bE^\perp(t_n,\bx^n)\,, \qquad {\rm
  for }\,\, n\geq 1\,.
\end{cases}
\ee

A rash inspection may lead to the deceptive conclusion that a time
step $\Dt=\cO(\eps)$ is required to solve
\eqref{e:dNewton-1st_stepone} (for instance applying a Newton's method). However, this first step \eqref{e:dNewton-1st_stepone} can be equivalently written in terms of $(\by,\bv)$ as
 {\[(\by_\eps^1,\bv_\eps^1)=\bF(\by_\eps^1,\bv_\eps^1),\]
where $\bF=(\bF_{\by},\bF_{\bv})$ defined as
\mmn{
\begin{cases}
\ds\bF_{\by}(\by,\bv)&=\,\ds\by_\eps^0-\Dt\,\bE^\perp(t_1,\by\,+\,\eps\,(\bv)^\perp)\,,
\\[1em]
\ds\bF_{\bv}(\by,\bv)&=\,\ds\left(\Id+\frac{\Dt}{\eps^2}\J\right)^{-1}\left(\bv_\eps^0
+\frac{\Dt}{\eps}\bE(t_1,\by\,+\,\eps\,(\bv)^\perp)\right).
\end{cases}
}
So, it can be seen that the map $\bF$ has a Lipschitz constant which is not bigger than $2K_x\Dt$ for the norm $\|\cdot\|_{\eps}$ introduced in \eqref{d:esp-norm}. Therefore, thanks to the contraction mapping theorem, the unique solvability of the first step \eqref{e:dNewton-1st_stepone} is assured for a small enough time step, explicitly when $K_x\Dt<1/2$. For this alternate scheme, we prove the following modification of the second asymptotic estimate of Proposition \ref{prop:disct_err}.}

\begin{proposition} 
\label{prop:disct_err_stepone}
Assume $\bE\in W^{2,\infty}$ and $K_x\Dt<1/2$. Then, 
\begin{align*}
\|\bY_\eps^{\Dt}(t_n,0,\bx_\eps^0,\bv_\eps^0)
\,-\,
\bX^{\Dt}(t_n,0,\bx_\eps^0-\eps(\bv_\eps^0)^\perp)\|\,\ds\underset{{\scriptscriptstyle {\bE}}}{\lesssim}
\,\eps^2\,\frac{e^{2K_xt_n}\,(1+t_n^4)}{1-K_x\Dt}
\,(1+\|\bv_\eps^0\|^2+\eps^2)\,,
\end{align*}
{where $\bY_\eps^{\Dt}:=\bX_\eps^{\Dt}-\eps(\bV_\eps^{\Dt})^\perp$, and $\bX^{\Dt}$ and $(\bX_\eps^{\Dt},\bV_\eps^{\Dt})$ denote, respectively, the discrete asymptotic flow \eqref{e:dNgyro-1st_stepone} and the discrete flow obtained from \eqref{e:dNewton-1st_stepone}--\eqref{e:dNewton-1st}.}
\end{proposition}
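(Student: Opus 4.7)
The plan is to mirror the proof of Proposition~\ref{prop:disct_err}(ii), the only substantial change being the treatment of the first step, which is now fully implicit for both the particle scheme \eqref{e:dNewton-1st_stepone} and its asymptotic counterpart \eqref{e:dNgyro-1st_stepone}. The key observation is that, because both schemes now evaluate their fields at the same grid point $(t_1,\,\cdot\,)$, the ``Taylor-and-substitute'' trick used in the proof of Proposition~\ref{prop:disct_err}(ii) to turn $-\eps\,\dD_\bx\bE^\perp(t_n,\by_\eps^n)(\bv_\eps^n)^\perp$ into an $\cO(\eps^2)$-perturbation of the asymptotic dynamics---previously available only for $n\geq 1$---now applies also at $n=0$. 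This is precisely what removes the $\cO(\eps\,\Dt\,\|\bv_\eps^0\|)$ contribution that spoiled Proposition~\ref{prop:disct_err}(ii).

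Concretely, rearranging the implicit first-step velocity update gives
\[
(\bv_\eps^1)^\perp\,=\,\eps\,\bE(t_1,\bx_\eps^1)\,-\,\eps^2\,\frac{\bv_\eps^1-\bv_\eps^0}{\Dt}\,,
\]
which matches, up to an index shift, the explicit-scheme identity exploited in Proposition~\ref{prop:disct_err}(ii). Similarly, manipulating \eqref{e:dNewton-1st_stepone} as in the derivation of \eqref{eq:dy_time} yields $(\by_\eps^1-\by_\eps^0)/\Dt=-\bE^\perp(t_1,\bx_\eps^1)$. Applying Lemma~\ref{l:1st-J} to the linear implicit system defining $\bv_\eps^1$ yields the $\eps$-uniform bound $\|\bv_\eps^1\|\lesssim \|\bv_\eps^0\|+\eps\,K_0$, hence also $\|\bv_\eps^1-\bv_\eps^0\|\lesssim \|\bv_\eps^0\|+\eps\,K_0$, and the remaining $\eps$-uniform estimates of Lemma~\ref{lem:disct_zbound} propagate to later indices by iterating on the explicit step \eqref{e:dNewton-1st}. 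Taylor-expanding $-\bE^\perp(t_1,\by_\eps^1+\eps(\bv_\eps^1)^\perp)$ about $\by_\eps^1$, substituting the implicit velocity identity, and subtracting the asymptotic implicit step $(\by^1-\by^0)/\Dt=-\bE^\perp(t_1,\by^1)$ then yields
\[
(1-K_x\Dt)\,\|\by_\eps^1-\by^1\|\,\underset{{\scriptscriptstyle {\bE}}}{\lesssim}\,\eps^2\,(1+\|\bv_\eps^0\|^2+\eps^2)\,,
\]
where the residual $\cO(\eps^3\|\bv_\eps^1-\bv_\eps^0\|)$ produced by the substitution is absorbed via Young's inequality and the prefactor $1-K_x\Dt$ comes from the implicit character of both Euler steps; the assumption $K_x\Dt<1/2$ then inverts this cleanly.

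For $n\geq 1$, the two schemes reduce respectively to \eqref{e:dNewton-1st} and \eqref{e:dNgyro-1st}, so the analysis of Proposition~\ref{prop:disct_err}(ii) carries through almost verbatim, except for one bookkeeping tweak: at $n=1$ the substitution for $(\bv_\eps^1)^\perp$ inside the Taylor expansion must use the implicit first-step formula above (fields at $(t_1,\bx_\eps^1)$) instead of the explicit-scheme one (fields at $(t_0,\bx_\eps^0)$), which alters only the boundary terms of the telescoping Abel summation without affecting the $\cO(\eps^2)$ rate. The main delicate point is verifying that this re-indexing does not inject spurious $\cO(\eps)$-contributions through the telescoping; it is the combination of the $\eps^3/\Dt$ scaling of the residual piece with the $\eps$-uniform bound $\|\bv_\eps^1-\bv_\eps^0\|\lesssim \|\bv_\eps^0\|+\eps$---sharper than what a naive direct estimate would suggest---that seals the argument. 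Feeding the first-step estimate and the consolidated $n\geq 1$ contribution into the discrete Gr\"onwall lemma finally produces the stated bound.
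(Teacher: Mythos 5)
Your proposal follows the paper's approach exactly: you (i) observe that the implicit first step makes the Taylor-and-substitute trick available at $n=0$, (ii) propagate the $\eps$-uniform velocity bounds through the implicit first step via Lemma~\ref{l:1st-J}, (iii) isolate a first-step bound $\|\by_\eps^1-\by^1\|\lesssim\eps^2(1+\|\bv_\eps^0\|^2+\eps^2)/(1-K_x\Dt)$, with the $(1-K_x\Dt)^{-1}$ prefactor coming from the implicit treatment, and (iv) feed it into the $n\geq1$ analysis of Proposition~\ref{prop:disct_err}. The only cosmetic difference is that the paper keeps the telescoping boundary form $\dD_\bx\bE(t_1,\by_\eps^1)\bv_\eps^1-\dD_\bx\bE(t_0,\by_\eps^0)\bv_\eps^0$ (and correspondingly redefines $\bz_\eps^1$), whereas you bound $\|\bv_\eps^1-\bv_\eps^0\|$ directly; after invoking the $\eps$-uniform bound on $\|\bv_\eps^1\|$ both routes give the same $\cO(\eps^2)$ rate.
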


\begin{remark}
{Note that in Proposition \ref{prop:disct_err_stepone}, one could, instead, impose the constraint $K_x(\Dt)_0<1/2$ only on the first step. }
\end{remark}

\begin{proof}
As in Proposition \ref{prop:disct_err}, an $\eps$-uniform bound on the computed velocity is a prerequisite for the asymptotic estimate. The proof of such a bound is identical to the one of Lemma~\ref{lem:disct_zbound} except for the first step, where we need to re-define $\bz^1_\eps$ as
\mmn{
\bz_\eps^1&:=\bv_\eps^1\,+\,\eps\, \bE^\perp(t_{1},\bx_\eps^{1}).
}
Indeed, with this modification \eqref{eq:zn_discrete} is unaltered and, despite the modification of the first step, Lemma~\ref{l:1st-J} still implies
\[
\|\bv^1_\eps\|\leq \|\bv^0_\eps\|\,+\,\eps\,K_0\,.
\] 
so that the conclusion of Lemma~\ref{lem:disct_zbound} still holds for the new scheme.

Thus, as in the proof of Proposition \ref{prop:disct_err} and with the same $r_n$ thereof, one gets {the following estimate for all  $n\geq 1$ 
\begin{align*}
\|\by_\eps^{n}-\by^{n}\|
&\leq\, e^{K_x(t_n-t_1)}\,(1+K_x\,(t_n-t_1))\,
\left(\|\by_\eps^{1}-\by^{1}\|
+\eps^2\,\max_{2\leq \ell\leq n}(e^{-K_x(t_{\ell}-t_1)}r_\ell)\right).
\end{align*}
}

{As for the first step, with the same $\Theta_\eps$ {as before},} 
\[
\frac{\by_\eps^{1}-\by_\eps^0}{\Dt}=-\,\bE^\perp(t_1,\by_\eps^1)\,-\,\eps\,\dD_x\bE^\perp(t_1,\by_\eps^1)(\bv_\eps^{1})^\perp+\eps^2\Theta_\eps(t_1,\by_\eps^1,\bv_\eps^1)\,,
\]
{The reformulation of $\dD_x\bE(t_1,\by_\eps^1)(\bv_\eps^{1})^\perp$ writes}
\mmn{
\dD_x\bE(t_1,\by_\eps^1)(\bv_\eps^{1})^\perp
\,=\,&-\eps^2\,\dD_x\bE(t_1,\by_\eps^1)\left(\frac{\bv_\eps^{1}-\bv_\eps^{0}}{\Dt}\,-\,\frac{1}{\eps}\bE(t_{1},\bx_\eps^{1})\right)\,
\\
\,=\,&-\frac{\eps^2}{\Dt}\left(\dD_x\bE(t_{1},\by_\eps^{1})\bv_\eps^{1}-\dD_x\bE(t_{0},\by_\eps^{0})\bv_\eps^{0}\right)
+\eps\,\dD_{\bx}\bE(t_1,\by_\eps^1)\,\bE(t_{1},\bx_\eps^{1})
\\
\,&+\,\frac{\eps^2}{\Dt}\left(\dD_x\bE(t_{1},\by_\eps^{1})-\dD_x\bE(t_{0},\by_\eps^{0})\right)\,\bv_\eps^{0}\,,
}
which yields the estimate
\begin{align*}
\|\by_\eps^{1}-\by^{1}\|
\,\leq\,\frac{\eps^2}{1-K_x\Dt}\Bigg(&
\eps\,K_x\,(\|\bv_\eps^1\|+\|\bv_\eps^0\|)
\,+\,\Dt\,K_x\,K_0\\
&\,+\,\eps\,\Dt\,(K_{tx}+K_{xx}\,K_0)\,\|\bv_\eps^0\|
\,+\,\Dt\,\frac{K_{xx}}{2}\,\|\bv_\eps^1\|^2\Bigg)\,.
\end{align*}
From here the proof is completed as the one in Proposition~\ref{prop:disct_err} by collecting all the foregoing estimates.
\end{proof}

{It is essential to emphasize that numerical schemes relying on a distinct treatment of the initial step are not really desirable as they are not well-adapted to the case when the magnetic field is not uniformly strong. Namely, such schemes are inefficient when effectively the scaling parameter starts of size $\eps\sim 1$ and later gets small, $\eps\ll 1$. For this reason, we will consider a different type of remedy.}

\section{First-order scheme on the guiding center variable}
\label{sec:4}
We propose in this section a modification of the scheme \eqref{e:dNewton-1st}, which is simply
based on the approximation of $(\by_\eps,\bv_\eps)$ instead of $(\bx_\eps,\bv_\eps)$. We define a discrete time evolution by a first-order IMEX method for $n\geq 0$,
\mm{\label{e:dNewton-1st_y}
\begin{cases}\ds
\frac{\bv_\eps^{n+1}-\bv_\eps^{n}}{\Dt }&=\,\ds
\frac1\eps\,\bE(t_n,\by_\eps^{n}+\eps(\bv_\eps^{n})^\perp))\,-\,\frac{1}{\varepsilon^2}(\bv_\eps^{n+1})^\perp,
\\[1em]\ds 
\frac{\by_\eps^{n+1}-\by_\eps^{n}}{\Dt }&=\,-\bE^\perp\left(t_n,\by_\eps^n+\eps(\bv_\eps^{n+1})^\perp\right).
\end{cases}
}

The aim is {the improvement} of the asymptotic error, $\by_\eps^n-\by^n$ where $(\by^n)_n$ solves \eqref{e:dNgyro-1st}, the explicit Euler scheme for the guiding center equation \eqref{e:Ngyro}, with initial datum $\by^0=\by_\eps^0$. Note that {the $\by_\eps$-update in \eqref{e:dNewton-1st_y}  differs from \eqref{eq:dy_time}}, only by the presence of $\bv_\eps^{n+1}$ instead of $\bv_\eps^{n}$. {We will see (in Proposition \ref{prop:disct_err_y} below) that this simple difference will remove the unwanted term of Proposition~\ref{prop:disct_err}.}

In terms of the spatial variable $\bx_\eps^n:=\by_\eps^n+\eps(\bv_\eps^n)^\perp$, for $n\geq 0$, the scheme \eqref{e:dNewton-1st_y} is equivalently written as 
\mm{\label{e:dNewton-1st_y_xintime}
\begin{cases}\ds
\frac{\bv_\eps^{n+1}-\bv_\eps^{n}}{\Dt }&=\,\ds
\frac1\eps\,\bE(t_n,\bx_\eps^{n})\,-\,\frac{1}{\varepsilon^2}(\bv_\eps^{n+1})^\perp,
\\[1em]\ds 
\frac{\bx_\eps^{n+1}-\bx_\eps^{n}}{\Dt }&=\,\ds
\frac{\bv_\eps^{n+1}}{\eps}+\left[\bE^\perp(t_n,\bx_\eps^n)-\bE^\perp\left(t_n,\bx_\eps^n+\eps\left(\bv_\eps^{n+1}-\bv_\eps^{n}\right)^\perp\right)\right].
\end{cases}
}
Compared to \eqref{e:dNewton-1st}, the modification to $(\bx_\eps^{n})_{n\geq1}$ is expected to be $\cO(\eps\,\Dt)$, hence immaterial for asymptotic and numerical convergences of the variable $\bx_\eps^n$. {The following theorem provides the main error estimate concerning the modified scheme \eqref{e:dNewton-1st_y}.}

\begin{theorem}
\label{thm:error_estim_1st_y}
{The first-order scheme \eqref{e:dNewton-1st_y} possesses a unique solution. Moreover}	
 \begin{enumerate}[(i)]
 \item when $\bE\in W^{1,\infty}$, the space variable $\bx_\eps$ satisfies for all $n\geq0$, $\Dt>0$ and $\eps>0$,
\mmn{
\|\bx_\eps^n-\bx_\eps(t_n)\|
&\underset{{\scriptscriptstyle {\bE}}}{\lesssim}\,
(1+t_{n}^3)\,e^{(2+K_x\Dt)K_xt_n}(1+\|\bv_\eps^0\|+\eps)
\times\min\Bigg(\frac{\Dt}{\eps^3}(1+\eps^2),
\Dt+\eps\Bigg),
}
\item when $\bE\in W^{2,\infty}$, the guiding center variable $\by_\eps$ satisfies for all $n\geq0$, $\Dt>0$ and $\eps>0$,
\mmn{
\|\by_\eps^n-\by_\eps(t_n)\|
&\underset{{\scriptscriptstyle {\bE}}}{\lesssim}\,
(1+t_{n}^4)\,e^{(2+K_x\Dt)K_xt_n}(1+\|\bv_\eps^0\|^2+\eps^2)
\times\min\Bigg(\frac{\Dt}{\eps^3}\,(1+\eps)\,,\Dt+\eps^2\,\Bigg)\,.
}
\end{enumerate}
\end{theorem}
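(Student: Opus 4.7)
The plan is to mirror the two-pronged strategy of Section \ref{sec:3}: a direct $\cO(\Dt/\eps^3)$ convergence estimate combined with an $\eps$-uniform asymptotic estimate, then triangulated against Theorem \ref{thm:second_estim} and Proposition \ref{prop:E1_gc} as in \S\ref{sec:3.4}. Unique solvability is immediate: the velocity equation in \eqref{e:dNewton-1st_y} is linear in $\bv_\eps^{n+1}$ with the implicit operator $\Id + (\Dt/\eps^2)\J$ invertible by Lemma \ref{l:1st-J}, after which $\by_\eps^{n+1}$ follows explicitly. Moreover, since the velocity update in \eqref{e:dNewton-1st_y} is formally identical to that of \eqref{e:dNewton-1st} when expressed via $\bx_\eps^n = \by_\eps^n + \eps(\bv_\eps^n)^\perp$ (cf.\ \eqref{e:dNewton-1st_y_xintime}), the auxiliary variable $\bz_\eps^n$ of \eqref{def:zn_discrete} still obeys \eqref{eq:zn_discrete}, so Lemma \ref{lem:disct_zbound} furnishes the same $\eps$-uniform velocity bound.

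For the direct estimate, I would adapt the truncation errors to \eqref{e:dNewton-1st_y}. The velocity truncation error is unchanged from Lemma \ref{l:1st_consistency}, while $\bftau_\by^n$ acquires an extra contribution $\bE^\perp(t_n, \by_\eps(t_n) + \eps\bv_\eps^\perp(t_{n+1})) - \bE^\perp(t_n, \bx_\eps(t_n))$, bounded by $\eps K_x \|\bv_\eps(t_{n+1}) - \bv_\eps(t_n)\| \leq \eps K_x \Dt \max\|\bv_\eps'\|$; via Lemma \ref{lem:apriori}, this is of the same order as the original bound. On the stability side, since $\bv_\eps^{n+1}$ now appears inside the $\by_\eps^{n+1}$-update, one has
\mmn{
\|\ber_\by^{n+1}\| \leq (1 + K_x\Dt)\,\|\ber_\by^n\| + K_x\Dt\,\eps\,\|\ber_\bv^{n+1}\| + \Dt\,\|\bftau_\by^n\|,
}
which, combined with the velocity error estimate obtained as in Proposition \ref{prop:E1_y} via Lemma \ref{l:1st-J}, yields the iteration
\mmn{
\|(\ber_\by^{n+1}, \ber_\bv^{n+1})\|_\eps \leq \bigl(1 + (2 + K_x\Dt)K_x\Dt\bigr)\|(\ber_\by^n, \ber_\bv^n)\|_\eps + \Dt(1 + K_x\Dt)\|(\bftau_\by^n, \bftau_\bv^n)\|_\eps\,.
}
Iterating accounts for the $(2 + K_x\Dt)K_x t_n$ exponent, and summing against the truncation bounds delivers the $\Dt/\eps^3$ part of the theorem.

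The essential novelty lies in the asymptotic estimate, where \eqref{e:dNewton-1st_y} is designed to suppress the spurious $\cO(\eps \Dt)$ term of Proposition \ref{prop:disct_err}. Taylor-expanding the $\by_\eps$-update gives
\mmn{
\frac{\by_\eps^{n+1} - \by_\eps^n}{\Dt} = -\bE^\perp(t_n, \by_\eps^n) - \eps\,\dD_\bx\bE^\perp(t_n, \by_\eps^n)(\bv_\eps^{n+1})^\perp + \eps^2\,\tilde{\Theta}_\eps^n\,,
}
with $\|\tilde{\Theta}_\eps^n\| \lesssim K_{xx}\,\|\bv_\eps^{n+1}\|^2$. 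Unlike the substitution performed after \eqref{eq:dy_time-bis}, the velocity update of \eqref{e:dNewton-1st_y} delivers the \emph{exact} identity
\mmn{
(\bv_\eps^{n+1})^\perp = -\eps^2\,\frac{\bv_\eps^{n+1} - \bv_\eps^n}{\Dt} + \eps\,\bE(t_n, \bx_\eps^n)
}
at the \emph{same} time index $t_n$ and state $\bx_\eps^n$ appearing in the expansion, with no backward-in-time shift. Substituting produces a $\eps^2\,\dD_\bx\bE^\perp(t_n, \by_\eps^n)\bE(t_n, \bx_\eps^n)$ term together with a $(\eps^3/\Dt)\,\dD_\bx\bE^\perp(t_n, \by_\eps^n)(\bv_\eps^{n+1} - \bv_\eps^n)$ term; summing by parts the latter against the smoothly varying coefficient $\dD_\bx\bE^\perp(t_n, \by_\eps^n)$, whose discrete variations are controlled by $\Dt(K_{tx} + K_{xx}K_0)$ thanks to \eqref{e:dNewton-1st_y}, yields a cumulative $\cO(\eps^2)$ bulk together with boundary terms of size $\cO(\eps^3)$ at \emph{both} ends $\ell = 0$ and $\ell = n - 1$. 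The crux is that the boundary term at $\ell = 0$ is $\cO(\eps^3)$ rather than $\cO(\eps\Dt)$ \emph{precisely because} the alignment at $t_n$ removes the temporal mismatch responsible for the initial-step defect remedied by Proposition \ref{prop:disct_err_stepone}. The discrete Gr\"onwall lemma together with Lemma \ref{lem:disct_zbound} then closes the asymptotic estimate. The main obstacle will be the careful bookkeeping of the Abel-summation boundary terms and of the coupling between $\by$- and $\bv$-errors through the implicit operator; I expect this to be algebraically intricate but routine. Combining both bounds as in \S\ref{sec:3.4} completes the proof.
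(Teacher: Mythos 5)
Your proposal follows essentially the same two-pronged strategy as the paper: direct convergence via consistency estimates and the $\|\cdot\|_\eps$-norm iteration as in Proposition~\ref{prop:E1_y_y}, the $\eps$-uniform velocity bound transferred from the unchanged $\bz_\eps$-update, and the asymptotic estimate via Taylor expansion and Abel summation as in Proposition~\ref{prop:disct_err_y} — including the correct identification that aligning the substitution $(\bv_\eps^{n+1})^\perp = -\eps^2(\bv_\eps^{n+1}-\bv_\eps^n)/\Dt + \eps\bE(t_n,\bx_\eps^n)$ at the same index $n\geq0$ is precisely what reduces the $\ell=0$ boundary term to $\cO(\eps^3)$ and suppresses the spurious $\cO(\eps\Dt)$ defect of Proposition~\ref{prop:disct_err}. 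The only small imprecision is attributing the uniform bound to Lemma~\ref{lem:disct_zbound} verbatim rather than to its minor variant Lemma~\ref{lem:disct_zbound_y} (the $\bz_\eps$-update is indeed unchanged, but bounding the increment of $\bE^\perp$ now uses the modified $\bx_\eps$-update of \eqref{e:dNewton-1st_y_xintime}), which is a cosmetic difference.
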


{Our strategy to prove Theorem \ref{thm:error_estim_1st_y} follows closely the one to prove Theorem \ref{thm:error_estim} in \S\ref{sec:3}.}

\subsection{Direct convergence estimates}
\label{sec:4.2}

To perform a direct numerical convergence analysis, we, first, introduce corresponding consistency errors,
\mm{
\label{eq:tauv_1st_y}
\bftau_{\by}^n&:=
\ds\frac{\by_\eps(t_{n+1})-\by_\eps(t_{n})}{\Dt}\,+\,\bE^\perp(t_n,\by_\eps(t_n)\,+\,\eps\,\bv_\eps^\perp(t_{n+1}))\,,\\
\label{eq:tauy_1st_y}
\bftau_{\bv}^n&:=
\ds\frac{\bv_\eps(t_{n+1})-\bv_\eps(t_{n})}{\Dt}\,-\,\dfrac{\bE(t_n,\by_\eps(t_n)+\eps\, \bv_\eps^\perp(t_n))}{\eps}\,+\,\dfrac{\bv_\eps^\perp(t_{n+1})}{\eps^2}\,,
}
{which can be estimated as in Lemma~\ref{l:1st_consistency}. Note that $\bftau_{\bv}^n$ in \eqref{eq:tauv_1st_y} is identical to \eqref{eq:tauv_1st} whereas $\bftau_{\by}^n$ differs from
  \eqref{eq:tauy_1st} by the presence of $\bv_\eps^{n+1}$ instead of $\bv_\eps^n$. The corresponding modification to the proof of Lemma~\ref{l:1st_consistency} leads to a statement essentially identical to Lemma~\ref{l:1st_consistency}, hence omitted here. From this we derive the following proposition.}

\begin{proposition}
\label{prop:E1_y_y}
Assume that $\bE\in W^{2,\infty}$. {The error from the unique solution of the scheme \eqref{e:dNewton-1st_y} to the exact solution of the system \eqref{e:Newton} is bounded as}
\begin{align}
\|\bx_\eps^n-\bx_\eps(t_n)\|
&\,\leq\,\|\by_\eps^n-\by_\eps(t_n)\|
+\eps\|\bv_\eps^n-\bv_\eps(t_n)\|\,\\[0.5em]
&\underset{{\scriptscriptstyle {\bE}}}{\lesssim}\,
\frac{\Dt}{\eps^3}\,t_n\,e^{2K_xt_n}
(1+\eps^2)(1+\Dt)\Big( \|\bv^0_\eps\|+\eps\,(1+t_{n})\Big)\,.
\nonumber
\end{align}
\end{proposition}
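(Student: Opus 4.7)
The plan is to mimic the proof of Proposition~\ref{prop:E1_y} closely, since the only substantive change in the scheme \eqref{e:dNewton-1st_y} relative to \eqref{e:dNewton-1st} is that the $\by$-update involves $\bv_\eps^{n+1}$ in place of $\bv_\eps^n$. As before, I will work in the variables $(\by,\bv)$ equipped with the $\eps$-weighted norm \eqref{d:esp-norm}, set $\ber_{\by}^n:=\by_\eps^n-\by_\eps(t_n)$ and $\ber_{\bv}^n:=\bv_\eps^n-\bv_\eps(t_n)$, and derive a coupled recursion for $\|(\ber_\by^n,\ber_\bv^n)\|_\eps$.

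The first step is to note that the exact solution satisfies the same two update formulas as the scheme up to the truncation errors $\bftau_{\by}^n$, $\bftau_{\bv}^n$ defined in \eqref{eq:tauv_1st_y}--\eqref{eq:tauy_1st_y}. Since $\bftau_{\bv}^n$ coincides with its counterpart in \eqref{eq:tauv_1st}, and since $\bftau_{\by}^n$ differs only by evaluating the velocity at $t_{n+1}$ instead of $t_n$, the bounds of Lemma~\ref{l:1st_consistency} go through verbatim (just invoke Lemma~\ref{lem:apriori} at $t_{n+1}$ instead of $t_n$, which changes nothing qualitatively). Subtracting the two systems, using the Lipschitz bound on $\bE$ and the stability estimate $\Lambda_\lambda^{-1}\leq 1$ of Lemma~\ref{l:1st-J}, one obtains
\begin{align*}
\|\ber_{\bv}^{n+1}\|
&\leq \|\ber_{\bv}^n\|+\frac{K_x\Dt}{\eps}\,\|(\ber_\by^n,\ber_\bv^n)\|_\eps+\Dt\,\|\bftau_{\bv}^n\|,\\
\|\ber_{\by}^{n+1}\|
&\leq \|\ber_{\by}^n\|+K_x\Dt\,\|\ber_\by^n\|+K_x\Dt\,\eps\,\|\ber_\bv^{n+1}\|+\Dt\,\|\bftau_{\by}^n\|,
\end{align*}
the only new feature being the presence of $\|\ber_\bv^{n+1}\|$ (rather than $\|\ber_\bv^n\|$) in the second line, reflecting the implicit use of $\bv_\eps^{n+1}$ in the $\by$-update of \eqref{e:dNewton-1st_y}.

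The key step is then to close the recursion on the $\eps$-weighted norm. Substituting the first inequality into $\eps\|\ber_\bv^{n+1}\|$ on the right-hand side of the second (so as to avoid an implicit-in-$n+1$ bound) and summing the two estimates yields, after regrouping,
\begin{equation*}
\|(\ber_\by^{n+1},\ber_\bv^{n+1})\|_\eps
\leq (1+K_x\Dt)^2\,\|(\ber_\by^n,\ber_\bv^n)\|_\eps
+(1+K_x\Dt)\,\Dt\,\|(\bftau_\by^n,\bftau_\bv^n)\|_\eps.
\end{equation*}
Since $\|(\ber_\by^0,\ber_\bv^0)\|_\eps=0$, iterating and bounding $(1+K_x\Dt)^{2k}\leq e^{2K_x\Dt\,k}$ gives
\begin{equation*}
\|(\ber_\by^n,\ber_\bv^n)\|_\eps
\leq (1+K_x\Dt)\sum_{\ell=0}^{n-1}e^{2K_x(t_n-t_{\ell+1})}\,\|(\bftau_\by^\ell,\bftau_\bv^\ell)\|_\eps\,\Dt,
\end{equation*}
which is exactly the analogue of the bound in the proof of Proposition~\ref{prop:E1_y} multiplied by the extra factor $(1+K_x\Dt)$, which is the source of the $(1+\Dt)$ factor appearing in the statement.

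The remaining step is to plug in the consistency bounds (identical to Lemma~\ref{l:1st_consistency}), compute the geometric sum as in Proposition~\ref{prop:E1_y}, and use $\|\bx_\eps^n-\bx_\eps(t_n)\|\leq\|\ber_\by^n\|+\eps\|\ber_\bv^n\|=\|(\ber_\by^n,\ber_\bv^n)\|_\eps$ to conclude. I expect the main (mild) obstacle to be the bookkeeping leading to the clean factor $(1+K_x\Dt)^2$ in the recursion; any less careful substitution still produces the estimate but with uglier constants. Unique solvability is immediate: the velocity update only requires inverting $\Id+\lambda\J$ (Lemma~\ref{l:1st-J}) and then $\by_\eps^{n+1}$ is given explicitly in terms of $\bv_\eps^{n+1}$, so no fixed-point argument is needed here.
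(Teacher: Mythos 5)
Your proof is correct and follows essentially the same route as the paper's: same error variables, same $\eps$-weighted norm, same two coupled inequalities with $\|\ber_\bv^{n+1}\|$ appearing on the right-hand side of the $\by$-estimate, and the same closing substitution producing the factor $(1+K_x\Dt)^2$ in the recursion and $(1+K_x\Dt)$ on the consistency term. Your remarks on the truncation errors matching Lemma~\ref{l:1st_consistency} and on unique solvability also agree with the paper's treatment.
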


\begin{proof}
As in the proof of Proposition~\ref{prop:E1_y}, we consider {numerical errors}
\[
\ber_{\by}^n:=\by_\eps^n-\by_\eps(t_n)\,,\qquad\qquad 
\ber_{\bv}^n:=\bv_\eps^n-\bv_\eps(t_n)\,,
\]
and use the norm $\|\cdot\|_\eps$ introduced in \eqref{d:esp-norm}. By reformulating the second equation of \eqref{e:dNewton-1st_y} and using Lemma~\ref{l:1st-J}, we obtain
\begin{align*}\ds
\|\ber_{\by}^{n+1}\|
&\ds
\leq\,\|\ber_{\by}^{n}\|+K_x\Dt\,(\|\ber_{\by}^{n}\|+\eps\|\ber_{\bv}^{n+1})\|)
+\Dt\,\|\bftau_{\by}^{n}\|\,,\\[0.5em]
\eps\|\ber_{\bv}^{n+1}\|
&\ds
\leq\,\eps\|\ber_{\bv}^{n}\|+K_x\Dt\,\|(\ber_{\by}^{n},\ber_{\bv}^{n})\|_\eps
+\Dt\,\eps\,\|\bftau_{\bv}^{n}\|\,,
\end{align*}
{that may be combined to yield
\[
\|(\ber_{\by}^{n+1},\ber_{\bv}^{n+1})\|_\eps
\,\leq\,(1+K_x\Dt)^2\|(\ber_{\by}^{n},\ber_{\bv}^{n})\|_\eps
+\Dt\,(1+K_x\Dt)\,\|(\bftau_{\by}^{n},\bftau_{\bv}^{n})\|_\eps\,.
\]}
Iterating and applying our new version of Lemma~\ref{l:1st_consistency} {conclude} the proof. Let us observe that when doing so we use a slightly different form of the discrete Gr\"onwall where $(1+K_x\Dt)^2$ leads to the same exponential factor as $(1+2K_x\Dt)$ was, both being bounded by $e^{2K_x\Dt}$.
\end{proof}

\subsection{Asymptotic estimates}
\label{sec:4.3}

To prove asymptotic estimates, we first obtain uniform bounds on solutions to \eqref{e:dNewton-1st_y}. To do so, as in \S\ref{sec:3.3}, we consider a discrete analogue of the auxiliary variable $\bz_\eps$, denoted by $(\bz^n_\eps)_{n\geq 0}$, defined exactly as in \eqref{def:zn_discrete} and also satisfying \eqref{eq:zn_discrete}.

\begin{lemma}
\label{lem:disct_zbound_y}
{Assume  that $\bE\in W^{1,\infty}$ and consider \eqref{e:dNewton-1st_y}. Then, the variable $(\bz^n_\eps)_{n\geq 0}$ defined in \eqref{def:zn_discrete} and the velocity computed by the scheme are bounded, for $n\geq 1$, as}
\begin{align*}
\begin{cases}
\ds\|\bz_\eps^n\|
\,\leq\,
e^{K_{x}\,t_n}
\big(\|\bv^0_\eps\|+2\eps\,K_0\big)
+\eps\,t_n\,e^{K_{x}\,t_n}\left(K_t \,+\,K_{x}\,K_0\right)\,, 
\\[0.9em]
\ds\|\bv_\eps^n\|\,\leq\,\|\bz_\eps^n\|\,+\,K_0\,\eps\,.
\end{cases}
\end{align*}
\end{lemma}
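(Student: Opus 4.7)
The plan is to follow the same three-step strategy as in Lemma~\ref{lem:disct_zbound}: first check that the auxiliary variable $\bz_\eps^n$ satisfies the recursion \eqref{eq:zn_discrete} under the new scheme, second derive from \eqref{eq:zn_discrete} a discrete Gr\"onwall inequality on $\|\bz_\eps^n\|$, third treat the initial step separately, and fourth recover the bound on $\bv_\eps^n$ from $\bv_\eps^n=\bz_\eps^n-\eps\,\bE^\perp(t_{n-1},\bx_\eps^{n-1})$.

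The main observation, which makes the entire argument go through with essentially no modification, is that the velocity update in \eqref{e:dNewton-1st_y} coincides algebraically with the one in \eqref{e:dNewton-1st} when both are expressed in terms of $\bx_\eps^n=\by_\eps^n+\eps(\bv_\eps^n)^\perp$: this is exactly what the equivalent form \eqref{e:dNewton-1st_y_xintime} encodes. As a consequence the $\bz_\eps$-update \eqref{eq:zn_discrete} is shared by both schemes. The only place where the modified $\by$-rule could, in principle, have an effect is in the Lipschitz control of $\bE^\perp(t_n,\bx_\eps^n)-\bE^\perp(t_{n-1},\bx_\eps^{n-1})$ needed to conclude. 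One therefore bounds $\|\bx_\eps^n-\bx_\eps^{n-1}\|$ by splitting $\bx=\by+\eps\bv^\perp$: the $\by$-update in \eqref{e:dNewton-1st_y} directly yields $\|\by_\eps^n-\by_\eps^{n-1}\|\leq\Dt\,K_0$, while the identity $\bv_\eps^{n+1}-\bv_\eps^n=-\frac{\Dt}{\eps^2}(\bz_\eps^{n+1})^\perp$ (obtained by substituting $\bv_\eps^{n+1}=\bz_\eps^{n+1}-\eps\bE^\perp(t_n,\bx_\eps^n)$ into the $\bv$-update) gives $\eps\|(\bv_\eps^n)^\perp-(\bv_\eps^{n-1})^\perp\|=\frac{\Dt}{\eps}\|\bz_\eps^n\|$. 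Combining, one recovers the very same bound $\|\bx_\eps^n-\bx_\eps^{n-1}\|\leq\Dt\,K_0+\frac{\Dt}{\eps}\|\bz_\eps^n\|$ already used in the proof of Lemma~\ref{lem:disct_zbound}.

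Plugging this into \eqref{eq:zn_discrete} and applying Lemma~\ref{l:1st-J} (via $\Lambda_\lambda^{-1}\leq 1$) yields
\[
\|\bz_\eps^{n+1}\|\;\leq\;(1+K_x\Dt)\,\|\bz_\eps^n\|\;+\;\eps\,\Dt\,(K_t+K_x\,K_0),\qquad n\geq1,
\]
which is exactly the recursion solved in the previous lemma. Iterating gives
\[
\|\bz_\eps^n\|\leq e^{K_x(t_n-t_1)}\|\bz_\eps^1\|+\eps(t_n-t_1)e^{K_x(t_n-t_1)}(K_t+K_xK_0).
\]
To close the argument we need $\|\bz_\eps^1\|\leq\|\bv_\eps^0\|+2\eps K_0$, for which it suffices to show $\|\bv_\eps^1\|\leq\|\bv_\eps^0\|+\eps K_0$. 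Since the implicit equation satisfied by $\bv_\eps^1$ is again $\bv_\eps^1+\lambda(\bv_\eps^1)^\perp=\bv_\eps^0+\frac{\Dt}{\eps}\bE(t_0,\bx_\eps^0)$ with $\lambda=\Dt/\eps^2$, this follows from Lemma~\ref{l:1st-J} by applying the bound $\Lambda_\lambda^{-1}\leq 1$ to the $\bv_\eps^0$-term and $\Lambda_\lambda^{-1}\leq 1/\lambda$ to the forcing term. Finally, $\|\bv_\eps^n\|\leq\|\bz_\eps^n\|+\eps K_0$ is immediate from the definition of $\bz_\eps^n$.

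The expected main obstacle is not algebraic depth but a bit of bookkeeping: one must carefully verify that the replacement of $\bv_\eps^n$ by $\bv_\eps^{n+1}$ in the $\by$-update does not alter either the recursion \eqref{eq:zn_discrete} or the displacement bound on $\bx_\eps^n-\bx_\eps^{n-1}$; once this is recorded, the proof is a routine repetition of that of Lemma~\ref{lem:disct_zbound} with identical constants.
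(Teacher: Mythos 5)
Your proof is correct and follows the same strategy as the paper's: identify that the $\bz_\eps$-recursion \eqref{eq:zn_discrete} is unchanged, re-establish the displacement bound $\|\bx_\eps^n-\bx_\eps^{n-1}\|\leq\Dt\,K_0+\tfrac{\Dt}{\eps}\|\bz_\eps^n\|$, and then repeat the Gr\"onwall argument from Lemma~\ref{lem:disct_zbound}. The only cosmetic difference is that you obtain the displacement bound by splitting $\bx_\eps^n=\by_\eps^n+\eps(\bv_\eps^n)^\perp$ and controlling each piece, whereas the paper manipulates the $\bx_\eps$-update directly into the form $\tfrac{\bx_\eps^{n}-\bx_\eps^{n-1}}{\Dt}=\tfrac{\bz_\eps^n}{\eps}-\bE^\perp(t_{n-1},\bx_\eps^{n-1}+\eps(\bv_\eps^n-\bv_\eps^{n-1})^\perp)$; both give the identical bound.
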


\begin{proof}
The proof goes along the same lines as the proof of Lemma~\ref{lem:disct_zbound}. The only difference is that now one has directly, for $n\geq1$,
\mmn{
\|\bE^\perp(t_n,\bx_\eps^n)-\bE^\perp(t_{n-1},\bx_\eps^{n-1})\|
&\leq\,\left(K_t+K_x\left(\frac{\|\bz_\eps^n\|}{\eps}+K_0\right)\right)\,\Dt\,,
} 
from the $\bx_\eps$-update
\[
\frac{\bx_\eps^{n}-\bx_\eps^{n-1}}{\Dt }=\,\ds
\frac{\bz_\eps^{n}}{\eps}-\bE^\perp\left(t_{n-1},\bx_\eps^{n-1}+\eps\left(\bv_\eps^{n}-\bv_\eps^{n-1}\right)^\perp\right)\,.
\]
\end{proof}

To state asymptotic estimates, we denote the discrete flow for the scheme \eqref{e:dNewton-1st_y} by $(\bX_\eps^{\Dt},\bV_\eps^{\Dt})$, and the one for \eqref{e:dNgyro-1st} by $\bX^{\Dt}$.  {As before we also define $\bY_\eps^{\Dt}:=\bX_\eps^{\Dt}-\eps(\bV_\eps^{\Dt})^\perp$.} 

\begin{proposition} 
\label{prop:disct_err_y}
\begin{enumerate}[(i)]
\item
Assume $\bE\in W^{1,\infty}$. Then, 
\begin{align*}
\|\bX_\eps^{\Dt}(t_n,0,\bx_\eps^0,\bv_\eps^0)-\bX^{\Dt}(t_n,0,\bx_\eps^0)\|
&\ds\underset{{\scriptscriptstyle {\bE}}}{\lesssim}
\eps\,e^{K_xt_n}\,(1+\,t_n^3)\,\left(\|\bv_\eps^0\|+\eps\right)\,.
\end{align*}
\item 
Assume $\bE\in W^{2,\infty}$. Then,
\begin{align*}
\|\bY_\eps^{\Dt}(t_n,0,\bx_\eps^0,\bv_\eps^0)\,-\,
\bX^{\Dt}(t_n,0,\bx_\eps^0-\eps(\bv_\eps^0)^\perp)\|\ds\underset{{\scriptscriptstyle {\bE}}}{\lesssim}
\eps^2\,e^{2K_xt_n}\,(1+t_n^4)
\,\big(1+\|\bv_\eps^0\|^2+\eps^2\big)\,.
\end{align*}
\end{enumerate}
\end{proposition}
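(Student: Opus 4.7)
The plan is to mirror closely the proof of Proposition~\ref{prop:disct_err}, exploiting the key structural advantage of the new scheme: since the $\by_\eps$-update uses $\bv_\eps^{n+1}$ (rather than $\bv_\eps^n$), the reformulation of the leading correction term via the velocity equation can be carried out uniformly for all $n\geq0$, avoiding the first-step mismatch that produced the $\cO(\eps\,\Dt)$-term in Proposition~\ref{prop:disct_err}. As a preliminary, Lemma~\ref{lem:disct_zbound_y} provides the $\eps$-uniform velocity bounds needed to control nonlinearities.

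For (i), I would subtract \eqref{e:dNgyro-1st} from the $\by_\eps$-update in \eqref{e:dNewton-1st_y}, sum from $0$ to $n-1$, and use the Lipschitz bound on $\bE^\perp$ together with $\bx_\eps^n = \by_\eps^n + \eps(\bv_\eps^n)^\perp$ to deduce, writing $(\bx^n)_n := (\bX^{\Dt}(t_n,0,\bx_\eps^0))_n$,
\[
\|\bx_\eps^n-\bx^n\|\,\leq\,\eps(\|\bv_\eps^n\|+\|\bv_\eps^0\|)\,+\,K_x\Dt\sum_{\ell=0}^{n-1}\bigl(\|\bx_\eps^\ell-\bx^\ell\|+\eps\,\|\bv_\eps^{\ell+1}-\bv_\eps^\ell\|\bigr)\,.
\]
The velocity increment is bounded by $\|\bv_\eps^{\ell+1}\|+\|\bv_\eps^\ell\|$, which is $\cO(1)$ by Lemma~\ref{lem:disct_zbound_y}, so the extra term is a $\cO(\eps\,t_n)$-contribution; the discrete Grönwall lemma \eqref{eq:dgronwall2} then produces the claimed bound. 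The additional power of $t_n$ compared to Proposition~\ref{prop:disct_err} originates from this extra summand.

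For (ii), the core step is a Taylor expansion of $\bE^\perp$ around $\by_\eps^n$,
\[
\frac{\by_\eps^{n+1}-\by_\eps^n}{\Dt}\,=\,-\bE^\perp(t_n,\by_\eps^n)\,-\,\eps\,\dD_\bx\bE^\perp(t_n,\by_\eps^n)(\bv_\eps^{n+1})^\perp\,+\,\eps^2\,\Theta_\eps(t_n,\by_\eps^n,\bv_\eps^{n+1})\,,
\]
with $\|\Theta_\eps\|\leq\tfrac12 K_{xx}\|\bv_\eps^{n+1}\|^2$. The velocity update in \eqref{e:dNewton-1st_y} rearranges, valid for every $n\geq 0$, as $(\bv_\eps^{n+1})^\perp = \eps\,\bE(t_n,\bx_\eps^n) - \eps^2(\bv_\eps^{n+1}-\bv_\eps^n)/\Dt$. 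Substituting and rewriting the last contribution as a discrete time derivative,
\[
\dD_\bx\bE^\perp(t_n,\by_\eps^n)\,\frac{\bv_\eps^{n+1}-\bv_\eps^n}{\Dt}\,=\,\frac{\dD_\bx\bE^\perp(t_{n+1},\by_\eps^{n+1})\,\bv_\eps^{n+1}\,-\,\dD_\bx\bE^\perp(t_n,\by_\eps^n)\,\bv_\eps^n}{\Dt}\,-\,\frac{\Delta_n}{\Dt}\,\bv_\eps^{n+1}\,,
\]
where $\Delta_n := \dD_\bx\bE^\perp(t_{n+1},\by_\eps^{n+1})-\dD_\bx\bE^\perp(t_n,\by_\eps^n)$ is of size $\cO(\Dt)$ thanks to the $\by_\eps$-update. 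Subtracting \eqref{e:dNgyro-1st} for $(\by^n)_n := (\bX^{\Dt}(t_n,0,\bx_\eps^0-\eps(\bv_\eps^0)^\perp))_n$ and summing from $0$ to $n-1$, the telescopic contribution collapses to $\dD_\bx\bE^\perp(t_n,\by_\eps^n)\,\bv_\eps^n - \dD_\bx\bE^\perp(t_0,\by_\eps^0)\,\bv_\eps^0$, which is $\cO(1)$ and produces only an $\cO(\eps^2)$ error after multiplication by the $\eps^3$ prefactor. The discrete Grönwall lemma, together with the $\eps$-uniform velocity bounds from Lemma~\ref{lem:disct_zbound_y}, then delivers the claimed estimate.

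The main technical point --- and the whole raison d'être of moving $\bv_\eps^{n+1}$ into the $\by_\eps$-update --- is that the identity $(\bv_\eps^{n+1})^\perp = \eps\,\bE(t_n,\bx_\eps^n) - \eps^2(\bv_\eps^{n+1}-\bv_\eps^n)/\Dt$ now holds from $n=0$, so the telescopic sum matches the initial datum without any special first-step bookkeeping. This removes the $\eps\,\Dt\,\|\bv_\eps^0\|$-term present in Proposition~\ref{prop:disct_err}; the rest of the algebra, including the refined $(1+t_n^3)$ and $(1+t_n^4)$ exponential polynomial growth, follows the same template and should be essentially a bookkeeping task.
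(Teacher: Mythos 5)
Your proposal is correct and follows the same strategy as the paper's proof: subtract the limiting scheme, telescope the discrete-time-derivative reformulation of the $\dD_\bx\bE^\perp\,\bv^{n+1}_\eps$ term (valid from $n=0$ because the $\by_\eps$-update uses $\bv_\eps^{n+1}$), sum, apply the discrete Gr\"onwall lemma, and close with the velocity bounds from Lemma~\ref{lem:disct_zbound_y}. Incidentally, your version of the discrete Leibniz identity (with $\bv_\eps^{n+1}$ multiplying $\Delta_n$) is the algebraically exact one; the paper writes $\bv_\eps^n$ there, a harmless slip that does not affect the order of the resulting bound.
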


\begin{proof}
{To obtain the first estimate, we subtract the $\by_\eps$-update in \eqref{e:dNewton-1st_y} from the limit scheme \eqref{e:dNgyro-1st}, and make a summation from $0$ to $(n-1)$ to obtain, for $n\geq0$,}
\begin{align*}
\by_\eps^n-\bx^n&=\by_\eps^0-\bx^0
\,-\,\Dt\sum_{\ell=0}^{n-1}\left(\bE^\perp\left(t_\ell,\bx_\eps^\ell+\eps\left(\bv_\eps^{\ell+1}-\bv_\eps^{\ell}\right)^\perp\,\right)-\bE^\perp(t_\ell,\bx^\ell)\right)\,,
\end{align*}
{which, thanks to the Lipschitz continuity of the electric field, implies}
\begin{align*}
\|\bx_\eps^{n}-\bx^{n}\|
\,\leq\,
\eps\,(\|\bv_\eps^n\|+\|\bv_\eps^0\|)
+K_x\,\eps\Dt\sum_{\ell=0}^{n-1}(\|\bv_\eps^{\ell}\|+\|\bv_\eps^{\ell+1}\|)
+K_x\Dt\sum_{\ell=0}^{n-1}\|\bx_\eps^{\ell}-\bx^{\ell}\|\,.
\end{align*}
Applying the discrete Gr\"onwall lemma, this leads to 
\begin{align*}
\|\bx_\eps^{n}-\bx^{n}\|
&\underset{{\scriptscriptstyle {\bE}}}{\lesssim}\, \eps\,e^{K_xt_n}\,(1+t_n^2)\,
\left(\|\bv_\eps^{0}\|
+\max_{0\leq \ell\leq n}(e^{-K_x t_{\ell}}\|\bv_\eps^{\ell}\|)\right),
&n\geq 0\,.
\end{align*}
{Finally, applying Lemma~\ref{lem:disct_zbound_y} concludes the proof of the first estimate.}

{The proof of the second estimate starts with a Taylor expansion of the $\by_\eps$-update,}
\begin{align*}
\frac{\by_\eps^{n+1}-\by_\eps^n}{\Dt}&=-\,\bE^\perp(t_n,\by_\eps^n)\,-\,\eps\,\dD_x\bE^\perp(t_n,\by_\eps^n)(\bv_\eps^{n+1})^\perp
+\eps^2\Theta_\eps(t_n,\by_\eps^n,\bv_\eps^{n+1})\,,
&n\geq 0\,,
\end{align*}
with $\Theta_\eps$ such that $\|\Theta_\eps(t,\by,\bv)\|\leq \tfrac12 K_{xx}\|\bv\|^2$. Now, so as to rewrite the linear term, we observe that, when $n\geq0$,    
\mmn{
\dD_x\bE(t_n,\by_\eps^n)(\bv_\eps^{n+1})^\perp
\,=\,&-\eps^2\,\dD_x\bE(t_n,\by_\eps^n)\left(\frac{\bv_\eps^{n+1}-\bv_\eps^{n}}{\Dt}\,-\,\frac{1}{\eps}\bE(t_{n},\bx_\eps^{n})\right)\,
\\
\,=\,&-\frac{\eps^2}{\Dt}\left(\dD_x\bE(t_{n+1},\by_\eps^{n+1})\bv_\eps^{n+1}-\dD_x\bE(t_{n},\by_\eps^{n})\bv_\eps^{n}\right)
+\eps\,\dD_{\bx}\bE(t_n,\by_\eps^n)\,\bE(t_{n},\bx_\eps^{n})
\\
\,&+\,\frac{\eps^2}{\Dt}\left(\dD_x\bE(t_{n+1},\by_\eps^{n+1})-\dD_x\bE(t_{n},\by_\eps^{n})\right)\,\bv_\eps^{n},
}
{whose last term is bounded as}
\[
\|\dD_x\bE(t_{n+1},\by_\eps^{n+1})-\dD_x\bE(t_{n},\by_\eps^{n})\|
\leq \Dt\,\left(K_{tx}+K_{xx}\,K_0\right)\,.
\]
Then, with $\by^n:=\bX^{\Dt}(t_n,0,\bx_\eps^0-\eps(\bv_\eps^0)^\perp)$ for $n\geq0$, and arguing as in the proof of Proposition~\ref{prop:disct_err}, we deduce
$$
\|\by_\eps^{n}-\by^{n}\|
\leq\,\eps^2\,e^{K_xt_n}\,(1+K_x\,t_n)\,
\max_{1\leq \ell\leq n}(e^{-K_xt_{\ell}}r_\ell)\,,\qquad
n\geq 0\,,
$$
where, for $n\geq0$,
\begin{align*}
r_n:=\,\,&
\eps\,K_x\,(\|\bv_\eps^{n}\|+\|\bv_\eps^{0}\|)
\,+\,K_x\,K_0\,t^{n}+\eps\,\Dt\,(K_{tx}+K_{xx}\,K_0)\sum_{\ell=0}^{n-1}\|\bv_\eps^{\ell-1}\|
\,+\,\Dt\,\frac{K_{xx}}{2}\sum_{\ell=1}^{n}\|\bv_\eps^{\ell}\|^2\,.
\end{align*}
The proof is concluded with Lemma~\ref{lem:disct_zbound_y}.
\end{proof}

\subsection{Proof of Theorem~\ref{thm:error_estim_1st_y}}
\label{sec:4.4}

{The proof of Theorem~\ref{thm:error_estim_1st_y} goes along the same lines as of the proof of Theorem~\ref{thm:error_estim}. It consists, {on the one hand}, in applying  Proposition~\ref{prop:E1_y_y} and, on the other hand, in combining Theorem~\ref{thm:second_estim} and Propositions~\ref{prop:E1_gc} and~\ref{prop:disct_err_y}.}

\section{L-stable second-order implicit-explicit scheme}
\label{sec:5}
\setcounter{equation}{0}
In this section, we discuss {the uniform convergence analysis of a} second-order extension of the scheme presented in \S\ref{sec:4}. The analysis we are going to perform is conceptually similar to the ones in \S\ref{sec:3} and \S\ref{sec:4} but technically much more involved.

For concision's sake we restrict to the analysis of a single scheme, a scheme written on the guiding center variable. However, a similar analysis could be performed on the second-order version of the scheme of \S\ref{sec:3} (\cf \cite{FR16}). We stress that here also the deterioration of asymptotic estimates on the guiding variable $\by$ by an $\cO(\eps\,\Dt)$-term has no impact on numerical convergence errors since $\eps\,\Dt\lesssim\eps^2+\Dt^2$.  
 
The {semi-implicit} second-order method we consider is a combination of a Runge--Kutta method (for the explicit part) and an L-stable second-order SDIRK method (for the implicit part), with the parameter $\gamma$ chosen as the smallest root of the polynomial $\gamma^2-2\gamma+{1}/{2}=0$, that is, $\gamma=1-\frac{1}{\sqrt{2}}$; see \cite{boscarino2016high}. 

{To shed some light} on the structure of the scheme, we write the characteristic system \eqref{e:Newton} in the slightly more abstract form, 
\mm{\label{e:Newton2}
\begin{cases}
\ds\by_\eps'(t)\,=\,\bF_{\by}(t,\by_\eps(t),\eps\,\bv_\eps(t))\,,\\[1em]
\ds
\eps\,\bv_\eps'(t)\,=\,\bF^\eps_{\bv}(t,\by_\eps(t),\eps\,\bv_\eps(t),\eps\,\bv_\eps(t))\,,\\[1em]
\by_\eps(s)\,=\,\by_\eps^s,\qquad \bv_\eps(s)\,=\,\bv_\eps^s,
\end{cases}}
with
\begin{align*}
\ds\bF_{\by}(t,\hy,\tbw)
&:=-\bE^\perp(t,\hy+\tbw^\perp)\,,&
\bF^\eps_{\bv}(t,\hy,\hw,\tbw)&:=\bE(t,\hy+\hw^\perp)\,-\,\frac{\tbw^\perp}{\eps^2}\,,
\end{align*}
where a variable with a tilde $\widetilde{b}$ is used in stiff parts of the system while a hatted variables $\widehat{b}$ are to be used in non-stiff parts. The identification of stiff and non-stiff parts in \eqref{e:Newton2} prepares duplication at discrete level of the velocity variable $\bv$ as $\tbv$ and $\hv$ to be treated, respectively, implicitly and explicitly. Such a duplication is essential to obtain a semi-implicit scheme that avoids nonlinear iterations; see \cite{boscarino2016high}.

{The first stage of the scheme is a linearly-implicit update which provides an approximation of the velocity after a time step of size $\gamma\Dt$,}    
\begin{subequations}
\label{eq:lstab_stg}
\mm{
\label{eq:lstab_stg1}
\begin{cases}
\ds\ttime_{n+1}\,:=\,t_n\,+\,\gamma\Dt\,,\\[1.1em]
\ds\eps\frac{\tbv^{n+1}_\eps-\bv^n_\eps}{\gamma\Dt}\,=\,
\bF^\eps_{\bv}(t_n,\by_\eps^n,\eps\,\bv_\eps^n,\eps\,\tbv_\eps^{n+1})\,,
\end{cases}
}
{where $\tbv_\eps^{n+1}$ approximates $\bv_\eps(\ttime_{n+1})$, and is to be used in stiff parts.}

Then, the second stage provides an explicit approximation $(\hy_\eps^{n+1},\hv_\eps^{n+1})$ of $(\by_\eps,\bv_\eps)(\htime_{n+1})$ to be used in non stiff parts. It reads, for $n\geq0$,
\mm{
\label{eq:lstab_stg2-1}
\begin{cases}
\ds\htime_{n+1}\,:=\,t_n\,+\,\frac{\Dt }{2\gamma}\,,\\[1.1em]
\ds\frac{\hy_\eps^{n+1}-\by^n_\eps}{\Dt/(2\gamma)}\,=\,
\bF_{\by}(t_n,\by_\eps^n,\eps\,\tbv_\eps^{n+1})\,,    
\\[1.1em]
\ds\eps\frac{\hv_\eps^{n+1}-\bv^n_\eps}{\Dt/(2\gamma)}
\,=\,\bF^\eps_{\bv}(t_n,\by_\eps^n,\eps\,\bv_\eps^n,\eps\,\tbv_\eps^{n+1})\,.
\end{cases}
}
The last stage, {which provides the final update}, is linearly-implicit and writes
\mm{
\label{eq:lstab_stg2-0}
\begin{cases}
\ds\frac{\by_\eps^{n+1}-\by^n_\eps}{\Dt}
&=\,\,\ds(1-\gamma)\,\bF_{\by}(t_n,\by_\eps^n,\eps\,\tbv_\eps^{n+1})
+\gamma\,\bF_{\by}(\htime_{n+1},\hy_\eps^{n+1},\eps\,\bv_\eps^{n+1})\,,
\\[0.95em]
\ds\eps\frac{\bv_\eps^{n+1}- \bv^{n}_\eps}{\Dt}
&=\,\,\ds(1-\gamma)\,\bF_{\bv}^\eps(t_n,\by_\eps^n,\eps\,\bv_\eps^{n},\eps\,\tbv_\eps^{n+1})
+\gamma\,\bF_{\bv}^\eps(\htime_{n+1},\hy_\eps^{n+1},\eps\,\hv_\eps^{n+1},\eps\,\bv_\eps^{n+1})\,.
\end{cases}
}
\end{subequations} 

Since $\bF_{\by}(t,\by,0)=-\bE^\perp(t,\by)$, taking formally the limit $\eps\to0$ suggests that the above discretization tends to the discretization of the guiding center equation by a second-order fully explicit Runge--Kutta scheme, where the first stage is 
\begin{subequations}
\mm{
\label{e:dNgyro-2nd_1}
\begin{cases}
\ds\htime_{n+1}\,:=\,t_n\,+\,\frac{\Dt }{2\gamma}\,,\\[1.1em]
\ds\frac{\hx^{n+1}-\bx^n}{\Dt/(2\gamma)}\,=\,-\bE^\perp(t_n,\bx^n)\,,
\end{cases}
}
and the second one reads
\be
\label{e:dNgyro-2nd}    
\ds
\frac{\bfx^{n+1}-\bx^n}{\Dt}
\,=\,-(1-\gamma)\,\bE^\perp(t_n,\bfx^n)
-\gamma\,\bE^\perp(\htime_{n+1},\hx^{n+1})\,.
\ee
\end{subequations}

{On this second-order scheme \eqref{eq:lstab_stg}, our main result is the following theorem.}

\begin{theorem}
\label{thm:error_estim_2nd}
{The second-order scheme \eqref{eq:lstab_stg} possesses a unique solution. Moreover }	
 \begin{enumerate}[(i)]
\item there exists $C_0$ such that when $\bE\in W^{2,\infty}$, the space variable $\bx_\eps$ satisfies\\for all $n\geq0$, $\Dt>0$ and $\eps>0$,
\mmn{
\ds\|\bx_\eps^n-\bx_\eps(t_n)\|\,\underset{{\scriptscriptstyle \bE}}{\lesssim}
e^{C_0\,K_x\,t_n}\,(1+t_n)^3\,(1+\eps^2+\|\bv_\eps^0\|^2)
\,\min\left(\frac{\Dt^2}{\eps^5}(1+\eps^3),\eps+\Dt^2\right),
}
\item there exists $C_0$ such that when $\bE\in W^{2,\infty}$, the guiding center variable $\by_\eps$ satisfies\\for all $n\geq0$, $\Dt>0$ and $\eps>0$,
\mmn{
\ds\|\by_\eps^n-\by_\eps(t_n)\|\,\underset{{\scriptscriptstyle \bE}}{\lesssim}
e^{C_0\,K_x\,t_n}\,(1+t_n)^4\,(1+\eps^2+\|\bv_\eps^0\|^2)
\,\min\left(\frac{\Dt^2}{\eps^5}(1+\eps^3),\eps^2+\Dt^2\right).
}
\end{enumerate}
\end{theorem}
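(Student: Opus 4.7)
The plan is to mirror the three-pronged strategy of \S\ref{sec:3} and \S\ref{sec:4}: a direct consistency-plus-stability analysis furnishing the $\Dt^2/\eps^5$ branch, together with a triangle-inequality detour through the discrete asymptotic flow \eqref{e:dNgyro-2nd_1}--\eqref{e:dNgyro-2nd} furnishing the $\eps$-uniform branch; the final bound is the minimum of the two. Unique solvability is settled at once: stage \eqref{eq:lstab_stg1} amounts to inverting $\Id+(\gamma\Dt/\eps^2)\J$, which is unconditional by Lemma~\ref{l:1st-J}; stage \eqref{eq:lstab_stg2-1} is explicit; and in \eqref{eq:lstab_stg2-0} the velocity equation decouples into the same unconditionally solvable linear system, after which $\by_\eps^{n+1}$ is obtained explicitly.

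Next, I would extend the $\eps$-uniform bounds of Lemmas~\ref{lem:disct_zbound} and~\ref{lem:disct_zbound_y} by introducing $\bz_\eps^n:=\bv_\eps^n+\eps\bE^\perp(t_{n-1},\bx_\eps^{n-1})$ together with intermediate auxiliary variables attached to $\ttime_{n+1}$ and $\htime_{n+1}$. At each implicit solve, the scalar-product estimate annihilates the singular term $-\bz^\perp/\eps^2$ because $\J$ is skew-symmetric, yielding per-step growth $\|\bz_\eps^{n+1}\|\leq(1+CK_x\Dt)\|\bz_\eps^n\|+C\eps\Dt(K_t+K_xK_0)$ and hence $\eps$-uniform bounds identical in form to Lemma~\ref{lem:disct_zbound_y}. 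For the direct estimate, I would define three stage-wise truncation errors by substituting the exact solution into \eqref{eq:lstab_stg1}--\eqref{eq:lstab_stg2-0}; a third-order Taylor expansion of $(\by_\eps,\bv_\eps)$, based on $\bv_\eps'=\cO(1/\eps)$, $\bv_\eps''=\cO(1/\eps^4)$, $\bv_\eps'''=\cO(1/\eps^6)$ (obtained by repeated differentiation of \eqref{e:Newton} together with the $\bz_\eps$-bound above), yields truncation bounds with worst stiffness $\Dt^2/\eps^5$ coming from the velocity equation. A stability argument using the norm $\|\cdot\|_\eps$ from \eqref{d:esp-norm}, together with Lemma~\ref{l:1st-J} applied to each implicit solve, then iterates into the $\Dt^2/\eps^5$ branch; here L-stability ensures that the implicit damping $(1+(\gamma\Dt/\eps^2)^2)^{-1/2}\leq \min(1,\eps^2/(\gamma\Dt))$ tames the per-step amplification of velocity errors.

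The asymptotic estimate, analogous to Propositions~\ref{prop:disct_err} and~\ref{prop:disct_err_y}, is the main obstacle. For each guiding-center update in \eqref{eq:lstab_stg2-0} one Taylor expands $\bE^\perp(t_n,\by_\eps^n+\eps(\tbv_\eps^{n+1})^\perp)$ about $\by_\eps^n$; the formally $\cO(\eps)$ linear term is then converted into an $\cO(\eps^2)$ term by using \eqref{eq:lstab_stg1} to rewrite $(\tbv_\eps^{n+1})^\perp/\eps^2$ through $\bE(t_n,\bx_\eps^n)$ and $(\tbv_\eps^{n+1}-\bv_\eps^n)/(\gamma\Dt)$, exactly as in the continuous manipulation between \eqref{e:y-expand} and the identity preceding \eqref{e:Ngyro} in \S\ref{sec:2.2}. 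A discrete telescoping analogous to the one in Proposition~\ref{prop:disct_err_y} handles the remaining $\eps^3$-contribution and converts it into a perfect time-difference plus a quadratic-in-$\bv_\eps$ remainder. The twist in the second-order case is that the predictor stage \eqref{eq:lstab_stg2-1} feeds $\tbv_\eps^{n+1}$ into $\hy_\eps^{n+1}$, which in turn enters \eqref{eq:lstab_stg2-0}; hence the telescoping must be carried out simultaneously at the $\by$-level and at the predictor level, with the L-stable value $\gamma=1-1/\sqrt{2}$ ensuring that $\|\tbv_\eps^{n+1}-\bv_\eps^n\|$, $\|\hv_\eps^{n+1}-\bv_\eps^n\|$ and $\|\bv_\eps^{n+1}-\bv_\eps^n\|$ remain $\eps$-uniformly $\cO(1)$.

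Assembly is then identical in spirit to the proof of Theorem~\ref{thm:error_estim}: the $\eps$-uniform branch of Theorem~\ref{thm:error_estim_2nd} follows from the triangle inequality $\|\bx_\eps(t_n)-\bx_\eps^n\|\leq \|\bx_\eps(t_n)-\bx(t_n)\|+\|\bx(t_n)-\bx^n\|+\|\bx^n-\bx_\eps^n\|$, combining the continuous asymptotic estimate (Theorem~\ref{thm:second_estim}), a standard $\cO(\Dt^2)$ consistency-stability analysis of the non-stiff explicit Runge--Kutta scheme \eqref{e:dNgyro-2nd_1}--\eqref{e:dNgyro-2nd} for \eqref{e:Ngyro} (classical under $\bE\in W^{2,\infty}$), and the discrete asymptotic estimate obtained above. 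The $\cO(\eps\Dt)$ cross-terms that will appear along the way (just as in Proposition~\ref{prop:disct_err_y}) are absorbed by $\eps\Dt\leq \eps^2+\Dt^2\leq \eps^2+t_n\Dt$ for $n\geq1$, and so do not deteriorate the final rate.
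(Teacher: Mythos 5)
Your plan mirrors the paper's strategy faithfully: the same split into a direct consistency--stability estimate (giving the $\Dt^2/\eps^5$ branch) and a triangle-inequality detour through the discrete asymptotic flow \eqref{e:dNgyro-2nd_1}--\eqref{e:dNgyro-2nd} (giving the $\eps$-uniform branch), both resting on $\eps$-uniform velocity bounds through auxiliary $\bz$-variables, with the assembly identical to Theorems~\ref{thm:error_estim} and~\ref{thm:error_estim_1st_y}.

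There is, however, a genuine gap in your stability step. You argue that ``Lemma~\ref{l:1st-J} applied to each implicit solve'' suffices, noting only that the implicit damping $(1+(\gamma\lambda)^2)^{-1/2}$ ``tames'' the amplification. This is not enough. The one-step amplification on the velocity error is not the inverse $(\Id+\gamma\lambda\J)^{-1}$ alone: because the explicit contribution from $\tbv_\eps^{n+1}$ in \eqref{eq:lstab_stg2-0} feeds back a term $\propto\lambda\J(\Id+\gamma\lambda\J)^{-1}$, the relevant matrix is
\[
\A_\lambda\;=\;(\Id+\gamma\lambda\J)^{-2}\bigl(\Id+(2\gamma-1)\lambda\J\bigr)\,,
\]
and stage-wise application of Lemma~\ref{l:1st-J} merely gives $\|\A_\lambda\|\lesssim 1/\gamma>1$, which produces $(1/\gamma)^{t_n/\Dt}$ growth in the iteration, i.e.\ exponential blow-up as $\Dt\to0$, ruining the estimate. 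What makes the argument work is the explicit computation
\[
\|\A_\lambda\|=\frac{\sqrt{1+(2\gamma-1)^2\lambda^2}}{1+\gamma^2\lambda^2}\leq 1\quad\text{for all }\lambda>0\,,
\]
which is \emph{not} a generic feature of an IMEX RK scheme but uses the algebraic identity $(2\gamma-1)^2=2\gamma^2$ satisfied by the L-stable value $\gamma=1-1/\sqrt{2}$; this is precisely what Lemma~\ref{l:2nd-J} establishes. Your reference to ``L-stability'' gestures at the right phenomenon, but without identifying $\A_\lambda$ and verifying $\|\A_\lambda\|\leq 1$ via the $\gamma$-identity, the direct-error iteration in Proposition~\ref{prop:E1_y_2nd} does not close. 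A smaller, stylistic imprecision: you describe the discrete $\eps$-uniform bound as obtained by a scalar-product argument, whereas the discrete lemmas rely on the operator-norm bound $\|(\Id+\lambda\J)^{-1}\|\leq1$; the scalar-product cancellation is the continuous device (Lemma~\ref{lem:apriori}).
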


The proof of this theorem follows the same strategy as the one of {Theorems~\ref{thm:error_estim} and~\ref{thm:error_estim_1st_y}.} 

\subsection{Direct convergence estimates}
\label{sec:5.2}

To carry out a direct convergence analysis, we introduce consistency
errors for each stage of the scheme. We define for the first stage \eqref{eq:lstab_stg1}
\mm{
  \label{eq:tau_stg0_2nd}
\ttauv^n\,:=\,
\ds\frac{\bv_\eps(\ttime_{n+1})-\bv_\eps(t_{n})}{\gamma\Dt}\,-\,\dfrac{\bE(t_n,\by_\eps(t_n)+\eps\,
  \bv_\eps^\perp(t_n))}{\eps}\,+\,\dfrac{\bv_\eps^\perp(\ttime_{n+1})}{\eps^2}\,,
}
for the intermediate stage \eqref{eq:lstab_stg2-1}
\mm{\label{eq:tau_stg1_2nd}
\begin{cases}
\htauy^n\;:=&
\ds\frac{\by_\eps(\htime_{n+1})-\by_\eps(t_{n})}{\Dt/(2\gamma)}\,+\,\bE^\perp(t_n,\by_\eps(t_n)\,+\,\eps\,\bv_\eps^\perp(\ttime_{n+1}))\,,
\\[1.1em]
\htauv^n\,:=&
\ds\frac{\bv_\eps(\htime_{n+1})-\bv_\eps(t_{n})}{\Dt/(2\gamma)}\,-\,\dfrac{\bE(t_n,\by_\eps(t_n)+\eps\,
  \bv_\eps^\perp(t_n))}{\eps}\,+\,\dfrac{\bv_\eps^\perp(\ttime_{n+1})}{\eps^2}\,,
\end{cases}
}
and for the final stage \eqref{eq:lstab_stg2-0}
\mm{\label{eq:tau_stg2_2nd}
\begin{cases}
\bftau_{\by}^n\,:=\,&
\ds\frac{\by_\eps(t_{n+1})-\by_\eps(t_{n})}{\Dt}\,+\,(1-\gamma)\bE^\perp(t_n,\by_\eps(t_n)\,+\,\eps\,\bv_\eps^\perp(\ttime_{n+1}))
\\[1.1em]
&\,+\,\gamma\bE^\perp(\htime_{n+1},\by_\eps(\htime_{n+1})\,+\,\eps\,\bv_\eps^\perp(t_{n+1}))\,,
\\[1.2em]
\bftau_{\bfv}^n\,:=\, &
\ds\frac{\bv_\eps(t_{n+1})-\bv_\eps(t_{n})}{\Dt}\,-\,(1-\gamma)\dfrac{\bE(t_n,\by_\eps(t_n)+\eps\,
  \bv_\eps^\perp(t_n))}{\eps}\,+\,(1-\gamma)\dfrac{\bv_\eps^\perp(\ttime_{n+1})}{\eps^2}
\\[1.1em]
&\,-\,\gamma\dfrac{\bE(\htime_{n+1},\by_\eps(\htime_{n+1})+\eps\, \bv_\eps^\perp(\htime_{n+1}))}{\eps}\,+\,\gamma\dfrac{\bv_\eps^\perp(t_{n+1})}{\eps^2}\,.
\end{cases}}

{The following lemma provides bounds for these local truncation errors.}

\begin{lemma}\label{l:2nd_consistency}
Assume $\bE\in W^{2,\infty}$. Then, for any $n\geq0$, $\eps>0$ and $\Dt>0$, {the consistency errors of stages \eqref{eq:lstab_stg1}--\eqref{eq:lstab_stg2-1} satisfy}
\mmn{
\begin{cases}
\|\ttauv^n\|
&\ds\underset{{\scriptscriptstyle {\bE}}}{\lesssim}
\frac{\Dt}{\eps^4}\ e^{K_{x}\,\ttime_{n+1}}\ (1+\eps^2)\,
\Big( \|\bv^0_\eps\|+\eps\,(1+\ttime_{n+1})\Big)\,,
\\[1.1em]
\|\htauy^n\|
&\ds\underset{{\scriptscriptstyle {\bE}}}{\lesssim}
\frac{\Dt}{\eps}\ e^{K_{x}\,\htime_{n+1}}\ 
\Big( \|\bv^0_\eps\|+\eps\,(1+\htime_{n+1})\Big)\,,
\\[1.1em]
\|\htauv^n\|
&\ds\underset{{\scriptscriptstyle {\bE}}}{\lesssim}
\frac{\Dt}{\eps^4}\ e^{K_{x}\,\htime_{n+1}}\ (1+\eps^2)\,
\Big( \|\bv^0_\eps\|+\eps\,(1+\htime_{n+1})\Big)\,,
\end{cases}}
and {the consistency errors of the final stage \eqref{eq:lstab_stg2-0} satisfy} 
\mmn{
\begin{cases}\ds
\|\bftau_{\by}^n\|
&\ds\underset{{\scriptscriptstyle {\bE}}}{\lesssim} 
\frac{(\Dt)^2}{\eps^3}e^{K_{x}\,\htime_{n+1}}\Big( \|\bv^0_\eps\|+\eps\,(1+\htime_{n+1})\Big)\,
\left(1
+\eps\,e^{K_{x}\,\htime_{n+1}}\,\Big( \|\bv^0_\eps\|+\eps\,(1+\htime_{n+1})\Big)\right)\,,
\\[1.1em]
\ds\|\bftau_{\bv}^n\|
&\ds\underset{{\scriptscriptstyle {\bE}}}{\lesssim} 
\frac{(\Dt)^2}{\eps^6}e^{K_{x}\,\htime_{n+1}}\Big( \|\bv^0_\eps\|+\eps\,(1+\htime_{n+1})\Big)
\left(1
+\eps^3\,e^{K_{x}\,\htime_{n+1}}\,\left( \|\bv^0_\eps\|+\eps\,(1+\htime_{n+1})\right)\right)\,.
\end{cases}}
\end{lemma}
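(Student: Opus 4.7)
The plan is to split the proof according to the order of the stages: the three intermediate errors $\ttauv^n$, $\htauy^n$, $\htauv^n$ should be handled by first-order Taylor expansion exactly in the spirit of Lemma~\ref{l:1st_consistency}, while the final-stage errors $\bftau_{\by}^n$ and $\bftau_{\bv}^n$ require a second-order expansion exploiting the classical two-stage Runge--Kutta order conditions $b_1+b_2=1$ and $b_2c=1/2$ satisfied by the scheme's weights $((1-\gamma),\gamma)$ and intermediate node $c=1/(2\gamma)$. In both cases, the $\eps$-scalings of the final bounds are determined by the number of powers of $1/\eps^2$ that successive time derivatives of $\bv_\eps$ accumulate; all derivative bounds will follow by differentiating the system \eqref{e:Newton} and invoking Lemma~\ref{lem:apriori}.

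For the first group, I would substitute the exact system \eqref{e:Newton} to rewrite each error as a finite-difference-vs-derivative gap plus an explicit correction; for example,
\[
\ttauv^n = \frac{\bv_\eps(\ttime_{n+1})-\bv_\eps(t_n)}{\gamma\Dt} - \bv_\eps'(t_n) + \frac{\bv_\eps^\perp(\ttime_{n+1})-\bv_\eps^\perp(t_n)}{\eps^2},
\]
so that $\|\ttauv^n\| \lesssim \tfrac{\gamma\Dt}{2}\sup\|\bv_\eps''\| + \tfrac{\Dt}{\eps^2}\sup\|\bv_\eps'\|$. Differentiating \eqref{e:Newton} once produces $\bv_\eps'' = \d_t\bE/\eps + \dD_\bx\bE\,\bv_\eps/\eps^2 - \bE^\perp/\eps^3 - \bv_\eps/\eps^4$, whence $\|\bv_\eps''\| \lesssim (1+\eps^2)(\|\bv_\eps\|+\eps)/\eps^4$, and Lemma~\ref{lem:apriori} delivers the claimed $\Dt/\eps^4$ bound. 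The case of $\htauv^n$ is treated identically. For $\htauy^n$ the same strategy yields a Taylor-gap contribution bounded by $\Dt\sup\|\by_\eps''\| \lesssim \Dt\|\bv_\eps\|/\eps$ (from $\by_\eps'' = -\d_t\bE^\perp - \dD_\bx\bE^\perp\,\bv_\eps/\eps$) while the correction $\bE^\perp(t_n,\by_\eps(t_n)+\eps\bv_\eps^\perp(\ttime_{n+1}))-\bE^\perp(t_n,\bx_\eps(t_n))$ is bounded by $K_x\,\eps\cdot\gamma\Dt\sup\|\bv_\eps'\| \lesssim \Dt\|\bv_\eps\|/\eps$; the $\eps$-prefactor compensates one of the two powers of $1/\eps$ carried by $\|\bv_\eps'\|\lesssim\|\bv_\eps\|/\eps^2$, explaining the softer $\Dt/\eps$ bound.

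For $\bftau_\by^n$, expanding $\by_\eps(t_{n+1})-\by_\eps(t_n) = \Dt\,\by_\eps'(t_n) + \tfrac{\Dt^2}{2}\by_\eps''(t_n) + O(\Dt^3\|\by_\eps'''\|)$ together with a first-order expansion of each of the two $\bE^\perp$-evaluations in \eqref{eq:tau_stg2_2nd} around $(t_n,\by_\eps(t_n),\bv_\eps(t_n))$, the order conditions on $(1-\gamma,\gamma)$ and $c=1/(2\gamma)$ force cancellation of the $O(1)$ and $O(\Dt)$ contributions and leave a residual of size $\Dt^2$ controlled by $\|\by_\eps'''\|$ and products of lower-order derivatives. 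From \eqref{e:Newton} one finds $\|\by_\eps'''\|\lesssim \|\bv_\eps\|/\eps^3 + \|\bv_\eps\|^2/\eps^2 + \dots$, and Lemma~\ref{lem:apriori} then produces the $\Dt^2/\eps^3$ leading term together with the quadratic-in-$\bv_\eps^0$ correction. The treatment of $\bftau_\bv^n$ is parallel; iterating $\bv_\eps' = \bE/\eps - \bv_\eps^\perp/\eps^2$ shows that each differentiation multiplies the leading oscillatory term by $1/\eps^2$, so $\|\bv_\eps'''\|\lesssim \|\bv_\eps\|/\eps^6 + \dots$, yielding the $\Dt^2/\eps^6$ scaling. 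The main technical obstacle is the bookkeeping of this $O(\Dt)$ cancellation: the two $\bE^\perp$-evaluations in the final stage sample the velocity at $\ttime_{n+1}$ and $t_{n+1}$ while the time and space arguments live at $t_n$ and $\htime_{n+1}$, so the cancellation relies simultaneously on the Runge--Kutta order conditions and on the explicit form \eqref{eq:lstab_stg1} of the first-stage update. Once this cancellation is organized and the remainder is split into a finite number of products of bounded factors with controlled negative powers of $\eps$, Lemma~\ref{lem:apriori} completes the proof.
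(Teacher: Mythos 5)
Your overall decomposition --- first-order Taylor-gap estimates for the three intermediate-stage errors (mirroring Lemma~\ref{l:1st_consistency}), and a second-order expansion with cancellation for the final-stage errors --- is exactly the paper's strategy; the paper indeed just says the first group is ``almost identical'' to Lemma~\ref{l:1st_consistency} and spends its proof on $\bftau_{\by}^n,\bftau_{\bv}^n$. Your rewrite of $\ttauv^n$ in terms of $\bv_\eps'(t_n)$ is precisely what the paper has in mind, and your derivative-counting ($\|\bv_\eps''\|\lesssim(1+\eps^2)(\|\bv_\eps\|+\eps)/\eps^4$, $\|\bv_\eps'''\|\lesssim\|\bv_\eps\|/\eps^6+\cdots$, $\|\by_\eps'''\|\lesssim\|\bv_\eps\|/\eps^3+\|\bv_\eps\|^2/\eps^2+\cdots$) reproduces the right scalings.

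There is, however, a genuine gap in the algebraic conditions you cite as driving the $\cO(\Dt)$-cancellation in the final stage. With $b_1=1-\gamma$, $b_2=\gamma$ and $c=1/(2\gamma)$, the relations $b_1+b_2=1$ and $b_2c=1/2$ hold identically for \emph{every} nonzero $\gamma$; they encode second-order accuracy of the \emph{explicit limit} Runge--Kutta scheme \eqref{e:dNgyro-2nd_1}--\eqref{e:dNgyro-2nd} but carry no information about the specific choice $\gamma=1-1/\sqrt2$. If you test your recipe with, say, $\gamma=1/2$, both of your conditions still hold, yet the $\cO(\Dt)$ residuals do not cancel. What actually kills them are the two identities the paper isolates,
\begin{equation*}
(1-\gamma)\,\gamma\,+\,\gamma\left(1-\frac{1}{2\gamma}\right)\,=\,0\,,
\qquad\qquad
(1-\gamma)\,\gamma\,+\,\gamma\,=\,\frac{1}{2}\,,
\end{equation*}
both equivalent to the defining relation $\gamma^2-2\gamma+\tfrac12=0$. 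The second is the implicit weighted-node condition $(1-\gamma)\,\tilde c_1+\gamma\,\tilde c_2=1/2$ with implicit nodes $\tilde c_1=\gamma$, $\tilde c_2=1$, and is what cancels the stiff $\bv^\perp/\eps^2$ residual in $\bftau_{\bv}^n$. The first is a mixed implicit/explicit coupling condition specific to the IMEX structure: it cancels exactly the term you correctly flag as tricky, where $\bE^\perp$ is evaluated with velocity arguments at the implicit nodes $\gamma\Dt$ and $\Dt$ but time/space arguments at the explicit nodes $0$ and $\Dt/(2\gamma)$. Your remark that ``the cancellation relies $\ldots$ on the explicit form of the first-stage update'' shows you sense the right mechanism, but the conditions you write down would let a reader slide past the step where the choice of $\gamma$ actually enters. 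Naming these two identities is the crux that your proposal is missing; once they are in place, the rest of your bookkeeping --- Taylor remainders controlled by $\|\by_\eps'''\|,\|\bv_\eps'''\|$, lower-order cross-terms, and then Lemma~\ref{lem:apriori} --- is exactly what the paper does.
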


\begin{proof}
We skip the proofs of the estimates on $(\ttauv^n,\htauy^n,\htauv^n)$ as almost identical to those in Lemma~\ref{l:1st_consistency}. {Concerning $(\bftau_{\by}^n,\bftau_{\by}^n)$, though the approach we adopt is also conceptually similar, it is obviously more technicality-laden. Firstly, from Taylor expansions stem}
\begin{align*}\ds
\ds\Big\|\frac{\by_\eps(t_{n+1})-\by_\eps(t_{n})}{\Dt}
-\by_\eps'(t_n)-\frac{\Dt}{2}\by_\eps''(t_n)\Big\|&\ds\leq \frac{(\Dt)^2}{6} \max_{[t_n,t_{n+1}]}\|\by_\eps'''\|\\
&\ds\,\underset{{\scriptscriptstyle {\bE}}}{\lesssim}
\frac{(\Dt)^2}{\eps^3}(1+\eps^2)\left(\eps+\max_{[t_n,t_{n+1}]}\|\bv_\eps\|\right)
+\frac{(\Dt)^2}{\eps^2}\max_{[t_n,t_{n+1}]}\|\bv_\eps\|^2,\\[2ex]
\ds\Big\|\frac{\bv_\eps(t_{n+1})-\bv_\eps(t_{n})}{\Dt}
-\bv_\eps'(t_n)-\frac{\Dt}{2}\bv_\eps''(t_n)\Big\|\ds&\leq \frac{(\Dt)^2}{6}	 \max_{[t_n,t_{n+1}]}\|\bv_\eps'''\|\,,\\
&\ds\underset{{\scriptscriptstyle {\bE}}}{\lesssim}
\frac{(\Dt)^2}{\eps^6}(1+\eps^4)\left(\eps+\max_{[t_n,t_{n+1}]}\|\bv_\eps\|\right)
+\frac{(\Dt)^2}{\eps^3}\max_{[t_n,t_{n+1}]}\|\bv_\eps\|	^2\,,
\end{align*}
{where the bounds on $\by_\eps'''$ and $\bv_\eps'''$ have been obtained from \eqref{e:Newton}.}

Likewise
\begin{align*}\ds
\left\|
(1-\gamma)\bv_\eps(\ttime_{n+1})
+\gamma\bv_\eps(t_{n+1}) -\bv_\eps(t_n)
-\frac{\Dt}{2}\bv_\eps'(t_n)
\right\| & \ds\underset{{\scriptscriptstyle {\bE}}}{\lesssim} \,(\Dt)^2\max_{[t_n,t_{n+1}]}\|\bv_\eps''\|,\\
&\ds\underset{{\scriptscriptstyle {\bE}}}{\lesssim}\,
\frac{(\Dt)^2}{\eps^4}(1+\eps^2)\left(\eps+\max_{[t_{n},t_{n+1}]}\|\bv_\eps\|\right)\,.
\end{align*}
With a bit more manipulations, we also derive on one hand
\begin{align*}\ds
\Big\|
\bE(t_n,\by_\eps(t_n)\,+\,\eps\,\bv_\eps^\perp(\ttime_{n+1}))
&\ds-\bE(t_n,\bx_\eps(t_n))
-\gamma\eps\Dt\,\dD_\bx\bE(t_n,\bx_\eps(t_n))\,(\bv_\eps^\perp)'(t_n)
\Big\| 
\\
&\ds\underset{{\scriptscriptstyle {\bE}}}{\lesssim} \eps^2(\Dt)^2\left(\max_{[t_n,\ttime_{n+1}]}\|\bv_\eps'\|\right)^2 
+\eps(\Dt)^2\max_{[t_n,\ttime_{n+1}]}\|\bv_\eps''\|\\
&\ds\underset{{\scriptscriptstyle {\bE}}}{\lesssim}
\frac{(\Dt)^2}{\eps^2}\left(\max_{[t_n,\ttime_{n+1}]}\|\bv_\eps\|\right)^2
+\frac{(\Dt)^2}{\eps^3}(1+\eps^2)\left(\eps+\max_{[t_n,\ttime_{n+1}]}\|\bv_\eps\|\right),
\end{align*}
on the other hand
\begin{align*}\ds
\Big\|
\bE(\htime_{n+1},\by_\eps(\htime_{n+1})\,+\,\eps\,\bv_\eps^\perp(t_{n+1}))
&\ds-\bE(\htime_{n+1},\bx_\eps(\htime_{n+1}))
-\left(1-\frac{1}{2\gamma}\right)\eps\Dt\,\dD_\bx\bE(t_{n},\bx_\eps(t_{n}))\,(\bv_\eps^\perp)'(t_n)
\Big\|\\
&\ds\underset{{\scriptscriptstyle {\bE}}}{\lesssim}
\frac{(\Dt)^2}{\eps^2}\left(\max_{[t_{n},\htime_{n+1}]}\|\bv_\eps\|\right)^2
+\frac{(\Dt)^2}{\eps^3}(1+\eps^2)\left(\eps+\max_{[t_{n},\htime_{n+1}]}\|\bv_\eps\|\right)\,,
\end{align*}
since
$$
\Big\|\dD_\bx\bE(\htime_{n+1},\bx_\eps(\htime_{n+1}))-\dD_\bx\bE(t_n,\bx_\eps(t_{n}))\Big\|
  \underset{{\scriptscriptstyle {\bE}}}{\lesssim} \frac{\Dt}{\eps} \left(\eps+\max_{[t_{n},\htime_{n+1}]}\|\bv_\eps\|\right)\,,
$$
and lastly
\begin{align*}\ds
\Big\|
\bE(\htime_{n+1},\bx_\eps(\htime_{n+1}))
&\ds-\bE(t_n,\bx_\eps(t_{n}))
-\frac{\Dt}{2\gamma}\d_t\bE(t_n,\bx_\eps(t_{n}))
-\frac{\Dt}{2\gamma}\dD_\bx\bE(t_n,\bx_\eps(t_{n}))\bx'_\eps(t_{n})
\Big\|\\
&\ds\underset{{\scriptscriptstyle {\bE}}}{\lesssim}
(\Dt)^2\left[1+\left(\max_{[t_n,\htime_{n+1}]}\|\bx_\eps'\|\right)^2
+\max_{[t_n,\htime_{n+1}]}\|\bx_\eps''\|\right]\\
&\ds\underset{{\scriptscriptstyle {\bE}}}{\lesssim}
\frac{(\Dt)^2}{\eps^2}\left(\max_{[t_n,\htime_{n+1}]}\|\bv_\eps\|\right)^2
+\frac{(\Dt)^2}{\eps^3}(1+\eps^2)\left(\eps+\max_{[t_{n},\htime_{n+1}]}\|\bv_\eps\|\right)\,.
\end{align*}

{Finally, by using the definition of $\gamma$ to derive the following identities}
\[
(1-\gamma)\,\gamma\,+\,\gamma\,\left(1-\frac{1}{2\gamma}\right)\,=\,0\,,\qquad\qquad
(1-\gamma)\,\gamma\,+\,\gamma\,=\,\frac{1}{2}\,,
\]
{one may combine all the estimates with \eqref{e:Newton} to obtain}
\mmn{
\begin{cases}\ds
\|\bftau_{\by}^n\|
&\ds\,\underset{{\scriptscriptstyle {\bE}}}{\lesssim}\, 
\frac{(\Dt)^2}{\eps^3}(1+\eps^2)\left(\eps+\max_{[t_n,\htime_{n+1}]}\|\bv_\eps\|\right)
+\frac{(\Dt)^2}{\eps^2}\,\left(\max_{[t_n,\htime_{n+1}]}\|\bv_\eps\|\right)^2,
\\[1.1em]
\ds
\|\bftau_{\bv}^n\|
&\ds\,\underset{{\scriptscriptstyle {\bE}}}{\lesssim}\, 
\frac{(\Dt)^2}{\eps^6}(1+\eps^4)\left(\eps+\max_{[t_n,\htime_{n+1}]}\|\bv_\eps\|\right)
+\frac{(\Dt)^2}{\eps^3}\,\left(\max_{[t_n,\htime_{n+1}]}\|\bv_\eps\|\right)^2,
\end{cases}}
and the proof is concluded by applying Lemma~\ref{lem:apriori}.
\end{proof}

{As we discussed in \S\ref{sec:3.2}, in addition to the consistency estimates of the foregoing lemma, one needs a stability analysis. To investigate the stability of the implicit part, we observe that combining velocity updates of the scheme \eqref{eq:lstab_stg}, employing the explicit expression of $\bF^\eps_{\bv}$, and manipulating the terms, one derives
\mmn{
\bv_\eps^{n+1}=(\Id+\gamma\,\lambda\,\J)^{-1}\left(\Id-(1-\gamma)\J\,(\Id+\gamma\,\lambda\,\J)^{-1}\right)\bv_\eps^n+ \cdots,
}
with $\lambda=\Dt/\eps^2$. Hence the need to investigate the stability of the matrix $\A_\lambda$ defined as 
\[
\A_\lambda:=(\Id+\gamma\,\lambda\,\J)^{-1}(\Id-(1-\gamma)\lambda\J\,(\Id+\gamma\,\lambda\,\J)^{-1})
=(\Id+\gamma\,\lambda\,\J)^{-2}(\Id+(2\gamma-1)\lambda\J).
\]
}

\begin{lemma}\label{l:2nd-J} Let $\J$ be as in \eqref{rotation:mat}. Then, for any $\lambda>0$, the matrix $\A_\lambda$ satisfies
\mmn{
\left\|\A_\lambda\right\|
=\frac{1}{\sqrt{1+\frac{\gamma^4\lambda^4}{1+2\gamma^2\lambda^2}}}<1\,.
}
\end{lemma}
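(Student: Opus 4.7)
The plan is to compute $\A_\lambda$ explicitly in the basis $\{\Id,\J\}$ of the commutative algebra generated by $\J$, and then use the orthogonality of $\J$ to read off the operator norm.

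First, since $\J^2=-\Id$, Lemma~\ref{l:1st-J} gives $(\Id+\gamma\lambda\J)^{-1}=(1+\gamma^2\lambda^2)^{-1}(\Id-\gamma\lambda\J)$, so
\[
(\Id+\gamma\lambda\J)^{-2}\,=\,\frac{1}{(1+\gamma^2\lambda^2)^2}\,\left((1-\gamma^2\lambda^2)\Id-2\gamma\lambda\J\right)\,.
\]
Multiplying by $\Id+(2\gamma-1)\lambda\J$ and again using $\J^2=-\Id$, I expect to obtain an expression of the form
\[
\A_\lambda\,=\,\frac{1}{(1+\gamma^2\lambda^2)^2}\,\big(\,a(\lambda)\,\Id\,+\,b(\lambda)\,\J\,\big)
\]
with $a(\lambda)=1+(3\gamma^2-2\gamma)\lambda^2$ and $b(\lambda)=-\lambda-(2\gamma-1)\gamma^2\lambda^3$ (to be double-checked via direct expansion).

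Next, the key observation is that for any scalars $\alpha,\beta$, $(\alpha\Id+\beta\J)^\top(\alpha\Id+\beta\J)=(\alpha^2+\beta^2)\Id$, because $\J^\top=-\J$ and $\J^2=-\Id$; hence $\|\alpha\Id+\beta\J\|=\sqrt{\alpha^2+\beta^2}$. Applied to $\A_\lambda$, this gives
\[
\|\A_\lambda\|^2\,=\,\frac{a(\lambda)^2+b(\lambda)^2}{(1+\gamma^2\lambda^2)^4}\,.
\]
It remains to show that the numerator equals $(1+2\gamma^2\lambda^2)(1+\gamma^2\lambda^2)^2$, as this would yield $\|\A_\lambda\|^2=(1+2\gamma^2\lambda^2)/(1+\gamma^2\lambda^2)^2$, which rewrites as the claimed expression $\bigl(1+\gamma^4\lambda^4/(1+2\gamma^2\lambda^2)\bigr)^{-1}$.

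The main (and only genuine) obstacle is then this polynomial identity. Expanding both sides, one needs to match the coefficients of $\lambda^2$, $\lambda^4$ and $\lambda^6$; each of the three resulting identities is expected to reduce, after dividing by a monomial in $\gamma$, to the defining relation $2\gamma^2-4\gamma+1=0$, i.e.\ the equation $\gamma^2-2\gamma+1/2=0$ characterizing our choice $\gamma=1-1/\sqrt{2}$. Finally, the strict inequality $\|\A_\lambda\|<1$ is immediate from $\gamma^4\lambda^4/(1+2\gamma^2\lambda^2)>0$ for $\lambda>0$.
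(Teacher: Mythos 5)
Your proposal is correct, but it takes a more computational route than the paper. The key structural fact you used---that $\|\alpha\Id+\beta\J\|=\sqrt{\alpha^2+\beta^2}$ because $(\alpha\Id+\beta\J)^\top(\alpha\Id+\beta\J)=(\alpha^2+\beta^2)\Id$---actually implies more: the operator norm is \emph{multiplicative} on the commutative algebra $\{\alpha\Id+\beta\J\}$ (these matrices are scalar multiples of rotations, so products factor into a product of scalars times a rotation). The paper exploits this to write
\[
\|\A_\lambda\|\,=\,\|(\Id+\gamma\lambda\J)^{-1}\|^2\,\|\Id+(2\gamma-1)\lambda\J\|
\,=\,\frac{\sqrt{1+(2\gamma-1)^2\lambda^2}}{1+\gamma^2\lambda^2}\,,
\]
and then invokes $(2\gamma-1)^2=2\gamma^2$ (one single use of the defining relation $2\gamma^2-4\gamma+1=0$), after which the claimed form follows by a trivial algebraic rearrangement. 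Your approach instead expands $\A_\lambda=\frac{1}{(1+\gamma^2\lambda^2)^2}(a(\lambda)\Id+b(\lambda)\J)$ explicitly and verifies the polynomial identity $a^2+b^2=(1+2\gamma^2\lambda^2)(1+\gamma^2\lambda^2)^2$; I checked that your $a(\lambda)$, $b(\lambda)$ are right and that each of the three resulting coefficient equations (in $\lambda^2$, $\lambda^4$, $\lambda^6$) does reduce to $2\gamma^2-4\gamma+1=0$, so the plan goes through. What the multiplicativity observation buys is a factorization of the verification: the cubic identity in $\lambda^2$ that you must check by hand decomposes into two independent, essentially trivial norm computations, with the defining relation for $\gamma$ used only once. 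Your route is more elementary in that it does not require noticing multiplicativity, but at the cost of a longer polynomial expansion; both are valid.
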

\begin{proof}
Proceeding as in the proof of Lemma~\ref{l:1st-J} yields
\[
\left\|(\Id+\gamma\,\lambda\,\J)^{-2}(\Id+(2\gamma-1)\lambda\J)\right\|
=\frac{\sqrt{1+(2\gamma-1)^2\lambda^2}}{1+\gamma^2\lambda^2}\,.
\]
Since from the equation defining $\gamma$ stems $(2\gamma-1)^2=2\gamma^2$, this achieves the proof.
\end{proof}

\begin{proposition}
\label{prop:E1_y_2nd}
{There exists a constant $C_0>0$ such that, when $\bE\in W^{2,\infty}$,  the error from the unique solution of the scheme \eqref{eq:lstab_stg} to the exact solution of the system \eqref{e:Newton} is such that for any $\Dt>0$, $n\geq0$ and $\eps>0$}
\begin{align*}
\|\bx_\eps^n&-\bx_\eps(t_n)\|
\,\leq\,\|\by_\eps^n-\by_\eps(t_n)\|
+\eps\|\bv_\eps^n-\bv_\eps(t_n)\|\,\\[0.5em]
&\underset{{\scriptscriptstyle {\bE}}}{\lesssim}\,
\frac{(\Dt)^2}{\eps^5}\,t_n\,e^{C_0K_xt_n}
\Big( \|\bv^0_\eps\|+\eps\,(1+t_{n})\Big)
\Big(1+\eps^3\,\left(\|\bv^0_\eps\|+\eps\,(1+t_{n})\right)\Big)\,.
\nonumber
\end{align*}
\end{proposition}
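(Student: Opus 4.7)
The plan is a direct transposition of the proofs of Propositions~\ref{prop:E1_y} and~\ref{prop:E1_y_y} to the three-stage scheme~\eqref{eq:lstab_stg}. I introduce the numerical errors at each stage,
\[
\widetilde{\ber}_{\bv}^{n+1}:=\tbv_\eps^{n+1}-\bv_\eps(\ttime_{n+1}),\qquad
\widehat{\ber}_{\by}^{n+1}:=\hy_\eps^{n+1}-\by_\eps(\htime_{n+1}),\qquad
\widehat{\ber}_{\bv}^{n+1}:=\hv_\eps^{n+1}-\bv_\eps(\htime_{n+1}),
\]
and $\ber_{\by}^{n+1}:=\by_\eps^{n+1}-\by_\eps(t_{n+1})$, $\ber_{\bv}^{n+1}:=\bv_\eps^{n+1}-\bv_\eps(t_{n+1})$. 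Subtracting the exact solution, perturbed by the consistency errors of Lemma~\ref{l:2nd_consistency}, from each stage of~\eqref{eq:lstab_stg}, and exploiting the Lipschitz continuity of $\bF_{\by}$ and of the non-stiff part of $\bF_{\bv}^\eps$, I aim to close a one-step Gr\"onwall-type recursion in the $\eps$-weighted norm $\|(\by,\bv)\|_\eps=\|\by\|+\eps\|\bv\|$ of~\eqref{d:esp-norm}.

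The first stage rewrites as $(\Id+\gamma\lambda\J)\,\tbv_\eps^{n+1}=\bv_\eps^n+(\gamma\Dt/\eps)\,\bE(t_n,\bx_\eps^n)$ with $\lambda=\Dt/\eps^2$, hence by Lemma~\ref{l:1st-J},
\[
\eps\,\|\widetilde{\ber}_{\bv}^{n+1}\|\,\le\,\eps\,\|\ber_{\bv}^n\|+\gamma\,K_x\,\Dt\,\|(\ber_{\by}^n,\ber_{\bv}^n)\|_\eps+\gamma\,\Dt\,\eps\,\|\ttauv^n\|,
\]
and the intermediate explicit stage~\eqref{eq:lstab_stg2-1} yields analogous control of $(\widehat{\ber}_{\by}^{n+1},\widehat{\ber}_{\bv}^{n+1})$ in terms of $\|(\ber_{\by}^n,\ber_{\bv}^n)\|_\eps$, $\|\widetilde{\ber}_{\bv}^{n+1}\|$, $\|\htauy^n\|$ and $\|\htauv^n\|$. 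The crucial point is the final stage: substituting the first-stage identity into the velocity equation of~\eqref{eq:lstab_stg2-0} eliminates $\tbv_\eps^{n+1}$ and reveals
\[
\bv_\eps^{n+1}\,=\,\A_\lambda\,\bv_\eps^n\,+\,\text{(forcing in $\bE$-values, $\widetilde{\ber}_{\bv}^{n+1}$ and $\widehat{\ber}_{\bv}^{n+1}$)},
\]
where $\A_\lambda=(\Id+\gamma\lambda\J)^{-2}(\Id+(2\gamma-1)\lambda\J)$ is exactly the propagator analyzed in Lemma~\ref{l:2nd-J}, whose operator norm is strictly below~$1$ uniformly in $\lambda$. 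This L-stability property is precisely what prevents any amplification of velocity errors per step despite the $\eps^{-2}$ skew term, while the mild factor $(1+K_x\Dt)$ coming from the non-stiff forcing will be absorbed into a universal constant $C_0$.

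Assembling the three stage-wise bounds gives a recursion of the form
\[
\|(\ber_{\by}^{n+1},\ber_{\bv}^{n+1})\|_\eps\,\le\,(1+C_0\,K_x\,\Dt)\,\|(\ber_{\by}^n,\ber_{\bv}^n)\|_\eps+\Dt\,\cT_n,
\]
with $\cT_n$ a linear combination of $\|\ttauv^n\|$, $\|\htauy^n\|$, $\|\htauv^n\|$, $\|\bftau_{\by}^n\|$ and $\eps\,\|\bftau_{\bv}^n\|$, all controlled by Lemma~\ref{l:2nd_consistency}. Iterating from $\ber^0=0$, applying the discrete Gr\"onwall inequality~\eqref{eq:dgronwall1}--\eqref{eq:dgronwall2}, and using $\|\bx_\eps^n-\bx_\eps(t_n)\|\le\|\ber_{\by}^n\|+\eps\,\|\ber_{\bv}^n\|$ then yields the announced estimate; the unique solvability of~\eqref{eq:lstab_stg} is a by-product, since each implicit sub-step only inverts $\Id+\gamma\lambda\J$. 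The main technical obstacle is the algebraic identification of $\A_\lambda$ as the velocity propagator of the final stage: without it, a crude bound based solely on $\|(\Id+\gamma\lambda\J)^{-1}\|\le 1$ would produce an extra factor $\lambda=\Dt/\eps^2$ whenever $\ber_{\bv}^{n+1}$ is estimated in terms of $\widetilde{\ber}_{\bv}^{n+1}$, and thus degrade the final bound from $(\Dt)^2/\eps^5$ to $(\Dt)^2/\eps^7$, missing the announced rate. A careful bookkeeping of the $\eps$-weights (in particular the fact that $\eps\,\|\bftau_{\bv}^n\|\lesssim(\Dt)^2/\eps^5(1+\cdots)$) is what produces the sharp dependence on $\eps$ stated in the proposition.
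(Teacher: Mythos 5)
Your plan follows exactly the paper's strategy: stage\-wise error variables $\ber_{\tbv}^{n}, \ber_{\hy}^{n}, \ber_{\hv}^{n}, \ber_{\by}^{n}, \ber_{\bv}^{n}$, the $\|\cdot\|_\eps$ norm, consistency from Lemma~\ref{l:2nd_consistency}, stability of the implicit propagator $\A_\lambda$ from Lemma~\ref{l:2nd-J}, a one\-step Gr\"onwall recursion, and solvability from invertibility of $\Id+\gamma\lambda\J$. You also correctly identify the central technical point --- that the effective velocity propagator of the full update is $\A_\lambda$, whose operator norm is $<1$ --- and you are right that treating the implicit inversions naively (using only $\|(\Id+\gamma\lambda\J)^{-1}\|\le 1$) misses the rate.

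However, the first\-stage estimate you actually write down, namely
$\eps\,\|\widetilde{\ber}_{\bv}^{n+1}\|\le\eps\,\|\ber_{\bv}^n\|+\gamma K_x\Dt\,\|(\ber_{\by}^n,\ber_{\bv}^n)\|_\eps+\gamma\Dt\,\eps\,\|\ttauv^n\|$,
is \emph{not} the one that makes the argument close. In the intermediate and final stages, $\tbv_\eps^{n+1}$ enters through the stiff term $-\tbw^\perp/\eps^2$ of $\bF^\eps_{\bv}$, so the coefficient in front of $\eps\|\widetilde{\ber}_{\bv}^{n+1}\|$ in those error inequalities is of size $\lambda=\Dt/\eps^2$. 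Feeding your bound into that produces a term $\lambda\,\eps\|\ber_{\bv}^n\|=(\Dt/\eps)\|\ber_{\bv}^n\|$ in the one\-step recursion, hence a Gr\"onwall amplification factor $(1+\Dt/\eps)^n\sim e^{t_n/\eps}$ rather than $e^{C_0K_xt_n}$ --- that is much worse than the polynomial degradation $\Dt^2/\eps^7$ you anticipate, and it would not recover even with the sharper $\|\A_\lambda\|<1$. The paper avoids this precisely by \emph{not} bounding $\eps\|\ber_{\tbv}^{n+1}\|$ alone, but the residual $\eps\|\ber_{\tbv}^{n+1}-(\Id+\gamma\lambda\J)^{-1}\ber_{\bv}^n\|$, for which Lemma~\ref{l:1st-J} yields the additional factor $\min(1,1/\lambda)$; this $\min(1,1/\lambda)$ cancels exactly the $\lambda$ that appears downstream. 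The piece $(\Id+\gamma\lambda\J)^{-1}\ber_{\bv}^n$ left behind is then what recombines, in the final\-stage velocity equation, with $-(1-\gamma)\lambda\J(\Id+\gamma\lambda\J)^{-1}$ to produce the $\A_\lambda\ber_{\bv}^n$ contribution you advertise. Your ``algebraic elimination of $\tbv_\eps^{n+1}$'' is the right idea and would produce an equivalent calculation, but as written your sketch carries the naive $\eps\|\widetilde{\ber}_{\bv}^{n+1}\|$ estimate in parallel, and if you use it (rather than the residual with its $\min(1,1/\lambda)$ gain, or a fully substituted form) the recursion does not close. You need to either (a) track the residual $\ber_{\tbv}^{n+1}-(\Id+\gamma\lambda\J)^{-1}\ber_{\bv}^n$ as the paper does, or (b) fully substitute the first\-stage identity into every occurrence of $\tbv_\eps^{n+1}$ \emph{before} taking norms, so that each $\lambda$ is paired with a $(\Id+\gamma\lambda\J)^{-1}$ and hence tamed; the stand\-alone bound on $\eps\|\widetilde{\ber}_{\bv}^{n+1}\|$ should then be dropped, as it is misleading.
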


\begin{proof}
As in the proof of Proposition~\ref{prop:E1_y}, we introduce {numerical errors as}
$$
\ber_{\by}^n\,:=\,\by_\eps(t_n)-\by_\eps^n\,, \qquad \qquad\ber_{\bv}^n\,:=\,\bv_\eps(t_n)-\bv_\eps^n\,,
$$
and likewise for the intermediate stages 
\mmn{
\ber_{\tbv}^n:=\,\bv_\eps(\ttime_n)-\tbv_\eps^n\,,\qquad\qquad
\ber_{\hy}^n:=\,\by_\eps(\htime_n)-\hy_\eps^n\,,\qquad\qquad
\ber_{\hv}^n:=\,\bv_\eps(\htime_n)-\hv_\eps^n\,.
}
We also use the norm $\|\cdot\|_\eps$ from \eqref{d:esp-norm} for our
estimates and set $\lambda=\Dt/\eps^2$.

On the one hand, by applying Lemma~\ref{l:1st-J},  we obtain from the first
stage \eqref{eq:lstab_stg1}
$$
\eps\left\|\ber_{\tbv}^{n+1}
-(\Id+\gamma\lambda\J)^{-1}\ber_{\bv}^n\right\|
\,\lesssim\,
\min\left(1,\frac1\lambda\right)
\,\left(K_x\Dt\,\|(\ber_{\by}^{n},\ber_{\bv}^{n})\|_\eps
+\Dt\,\eps\,\|\ttauv^n\|\right)\,,
$$
and from the intermediate stage \eqref{eq:lstab_stg2-1}
\begin{align*}
\ds\|(\ber_{\hy}^{n+1},\ber_{\hv}^{n+1})\|_\eps
&\,\ds\lesssim
  (1+K_x\Dt)\,\|(\ber_{\by}^{n},\ber_{\bv}^{n})\|_\eps+\Dt\,\|(\htauy^{n},\htauv^{n})\|_\eps
\\[0.5em]
&\,\ds\quad+\,(\lambda+K_x\Dt)\,\eps\|\ber_{\tbv}^{n+1}-(\Id+\gamma\lambda\J)^{-1}\ber_{\bv}^n\|\,.
\end{align*}
\begin{align*}
\ds\|(\ber_{\hy}^{n+1},\ber_{\hv}^{n+1})\|_\eps
&\,\ds\lesssim
  (1+K_x\Dt)^2\,\|(\ber_{\by}^{n},\ber_{\bv}^{n})\|_\eps+\Dt\,\|(\htauy^{n},\htauv^{n})\|_\eps
+\Dt\,(1+K_x\Dt)\,\eps\,\|\ttauv^n\|\,.
\end{align*}
On the other hand, by using Lemmas~\ref{l:1st-J} again, the last stage \eqref{eq:lstab_stg2-0} yields  both
\begin{align*}
\|\ber_{\by}^{n+1}\|-\|\ber_{\by}^{n}\|
\ds\,\lesssim\,&
K_x\Dt\,\|(\ber_{\by}^{n},\ber_{\bv}^{n})\|_\eps
+K_x\Dt\|(\ber_{\hy}^{n+1},\ber_{\hv}^{n+1})\|_\eps+\Dt\,\|\bftau_{\by}^{n}\|
\\[0.5em]
&\;+\;K_x\Dt\,\eps\|\ber_{\tbv}^{n+1}-(\Id+\gamma\lambda\J)^{-1}\ber_{\bv}^n\|
\end{align*}
and
\begin{align*}
\eps\|\ber_{\bv}^{n+1}-\A_{\lambda}\,\ber_{\bv}^{n}\|
&\lesssim
K_x\Dt\,\|(\ber_{\by}^{n},\ber_{\bv}^{n})\|_\eps
+K_x\Dt\|(\ber_{\hy}^{n+1},\ber_{\hv}^{n+1})\|_\eps+\Dt\,\eps\|\bftau_{\bv}^{n}\|
\\[0.5em]
&\,+\,\Dt\left(\frac{1}{\eps^2}+K_x\right)\,\eps\|\ber_{\tbv}^{n+1}-(\Id+\gamma\lambda\J)^{-1}\ber_{\bv}^n\|\,.
\end{align*}

By gathering the foregoing estimates and using Lemma~\ref{l:2nd-J} we deduce for $n\geq0$
\begin{align*}\ds
\|(\ber_{\by}^{n+1},\ber_{\bv}^{n+1})\|_\eps
-\|(\ber_{\by}^{n},\ber_{\bv}^{n})\|_\eps
\ds
\lesssim \, &K_x\Dt\,(1+K_x\Dt)^2\,\|(\ber_{\by}^{n},\ber_{\bv}^{n})\|_\eps
+\Dt\,\|(\bftau_{\by}^{n},\bftau_{\bv}^{n})\|_\eps\\
&\ds \,+\,K_x (\Dt)^2\,\|(\htauy^{n},\htauv^{n})\|_\eps
+(\Dt)^2\frac{1+K_x\eps^2}{\eps^2}\,(1+K_x\Dt)\,\eps\,\|\ttauv^n\|.
\end{align*}
The proof is then concluded as in the proof of Proposition~\ref{prop:E1_y_y}, by application of the discrete Gr\"onwall lemma and Lemma~\ref{l:2nd_consistency}.
\end{proof}

{We, then, continue with the direct convergence analysis of \eqref{e:dNgyro-2nd_1}--\eqref{e:dNgyro-2nd} to \eqref{e:Ngyro}, which is the counterpart of Proposition \ref{prop:E1_y_2nd} for the asymptotic model.}

\begin{proposition}
\label{prop:E1_gc_2nd}
There exists a constant $C_0>0$ such that when $\bE\in W^{2,\infty}$, we have 
\mmn{
\|\bx^n-\bx(t_n)\|
&\underset{{\scriptscriptstyle {\bE}}}{\lesssim}\, (\Dt)^2\,t_n\,e^{C_0K_x\,t_n}\,,
}
when $(\bx^{n})_{n\in\N}$ and $\bx$ solve respectively \eqref{e:dNgyro-2nd_1}--\eqref{e:dNgyro-2nd} and \eqref{e:Ngyro}, with the same initial datum.
\end{proposition}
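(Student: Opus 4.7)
The plan is to follow the standard consistency-plus-stability template for explicit Runge--Kutta schemes; in the absence of any stiffness in the asymptotic model \eqref{e:Ngyro}, this is substantially simpler than the proof of Proposition~\ref{prop:E1_y_2nd}.

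First I would introduce two local truncation errors, one for the intermediate stage \eqref{e:dNgyro-2nd_1} and one for the final stage \eqref{e:dNgyro-2nd}:
\begin{align*}
\widehat{\bftau}^n &\,:=\, \frac{\bx(\htime_{n+1})-\bx(t_n)}{\Dt/(2\gamma)}+\bE^\perp(t_n,\bx(t_n))\,,\\
\bftau^n &\,:=\, \frac{\bx(t_{n+1})-\bx(t_n)}{\Dt}+(1-\gamma)\,\bE^\perp(t_n,\bx(t_n))+\gamma\,\bE^\perp(\htime_{n+1},\bx(\htime_{n+1}))\,.
\end{align*}
Since $\widehat{\bftau}^n$ is simply the consistency error of an explicit Euler step of size $\Dt/(2\gamma)$, a first-order Taylor expansion combined with the identity $\bx''=-\partial_t\bE^\perp(t,\bx)+\dD_\bx\bE^\perp(t,\bx)\,\bE^\perp(t,\bx)$ would deliver $\|\widehat{\bftau}^n\|\underset{\scriptscriptstyle\bE}{\lesssim}\Dt$. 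For $\bftau^n$, a second-order Taylor expansion together with the order conditions $(1-\gamma)+\gamma=1$ and $\gamma\cdot\tfrac{1}{2\gamma}=\tfrac12$---the latter amounting precisely to $\gamma^2-2\gamma+\tfrac12=0$, which is the equation defining $\gamma$---would cancel the contributions of size $\Dt$ and $(\Dt)^2$ and yield $\|\bftau^n\|\underset{\scriptscriptstyle\bE}{\lesssim}(\Dt)^2$, with a constant depending on derivatives of $\bE$ up to order two.

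Next I would carry out the stability step. Setting $\ber^n:=\bx^n-\bx(t_n)$ and $\widehat{\ber}^{n+1}:=\hx^{n+1}-\bx(\htime_{n+1})$, subtracting the defining relations of $\widehat{\bftau}^n$ and $\bftau^n$ from \eqref{e:dNgyro-2nd_1}--\eqref{e:dNgyro-2nd} and invoking the Lipschitz bound on $\bE^\perp$ provides
\begin{align*}
\|\widehat{\ber}^{n+1}\|&\leq\Big(1+\tfrac{K_x\,\Dt}{2\gamma}\Big)\|\ber^n\|+\tfrac{\Dt}{2\gamma}\|\widehat{\bftau}^n\|\,,\\
\|\ber^{n+1}\|&\leq(1+(1-\gamma)\,K_x\,\Dt)\|\ber^n\|+\gamma\,K_x\,\Dt\,\|\widehat{\ber}^{n+1}\|+\Dt\,\|\bftau^n\|\,.
\end{align*}
Inserting the first inequality into the second produces a one-step estimate of the form $\|\ber^{n+1}\|\leq(1+C_0\,K_x\,\Dt)\|\ber^n\|+\Dt\,\|\bftau^n\|+C_0\,K_x\,(\Dt)^2\,\|\widehat{\bftau}^n\|$ for some universal $C_0$. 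Since $\ber^0=0$ and $(\Dt)^2\|\widehat{\bftau}^n\|$ is already of order $(\Dt)^3$, the discrete Gr\"onwall inequality \eqref{eq:dgronwall1}--\eqref{eq:dgronwall2}, combined with the consistency estimates above, would close the argument and deliver the announced bound.

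The only step requiring any genuine care is the verification that the order conditions are met, which is precisely where the defining identity for $\gamma$ is needed; everything else is a routine convergence analysis for a two-stage explicit Runge--Kutta method, with no $\eps$-dependent subtlety since the asymptotic model is non-stiff.
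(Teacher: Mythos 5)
Your overall plan is correct and is exactly what the paper intends: the authors omit this proof, referring to it as a simpler version of the proof of Proposition~\ref{prop:E1_y_2nd}, and a routine consistency-plus-stability argument for a two-stage explicit Runge--Kutta scheme is indeed what is called for. The truncation errors you define, the Lipschitz stability step, and the application of the discrete Gr\"onwall inequality \eqref{eq:dgronwall1}--\eqref{eq:dgronwall2} are all in order.

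However, one claim in your proof is false and should be corrected. You write that the second order condition $\gamma\cdot\tfrac{1}{2\gamma}=\tfrac12$ ``amounts precisely to $\gamma^2-2\gamma+\tfrac12=0$, which is the equation defining $\gamma$.'' This is not right: $\gamma\cdot\tfrac{1}{2\gamma}=\tfrac12$ reduces to the trivial identity $\tfrac12=\tfrac12$ for every $\gamma\neq 0$, and says nothing about the particular value of $\gamma$. With Butcher coefficients $c_2=a_{21}=1/(2\gamma)$, $b_1=1-\gamma$, $b_2=\gamma$, both order-two conditions $b_1+b_2=1$ and $b_2c_2=\tfrac12$ are satisfied identically for every $\gamma\neq 0$; the asymptotic explicit scheme \eqref{e:dNgyro-2nd_1}--\eqref{e:dNgyro-2nd} is second order regardless of the specific choice of $\gamma$. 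The defining equation $\gamma^2-2\gamma+\tfrac12=0$ is needed for L-stability of the implicit SDIRK part and shows up in the consistency estimates for the full scheme (see the identities $(1-\gamma)\gamma+\gamma(1-\tfrac{1}{2\gamma})=0$ and $(1-\gamma)\gamma+\gamma=\tfrac12$ used in the proof of Lemma~\ref{l:2nd_consistency}, where the stage time $\ttime_{n+1}=t_n+\gamma\Dt$ genuinely enters), but it plays no role in the order conditions of the limiting explicit scheme. Since the order conditions are nevertheless satisfied, your proof goes through; only the stated justification, and your closing remark that this ``is precisely where the defining identity for $\gamma$ is needed,'' should be amended.
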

\begin{proof}
The proof is omitted as essentially a simpler version of the proof of Proposition~\ref{prop:E1_y_2nd}.
\end{proof}
 
\subsection{Asymptotic  estimates}
\label{sec:5.3}

{Regarding the asymptotic part of the convergence analysis, and to prepare the comparisons between solutions of \eqref{eq:lstab_stg1}--\eqref{eq:lstab_stg2-0} and \eqref{e:dNgyro-2nd_1}--\eqref{e:dNgyro-2nd}, we now examine $\eps$-uniform boundedness} of solution of the scheme \eqref{eq:lstab_stg1}--\eqref{eq:lstab_stg2-0}. To do so, as we have done in previous sections, we work with auxiliary variables that are small corrections to stiff velocity variables. They are defined as
\[
\tbz^n_\eps:=\tbv^n_\eps
\,+\,\eps\,\bE^\perp(t_{n-1},\by_\eps^{n-1}+\eps(\bv_\eps^{n-1})^\perp)
\,=\,\eps\bJ\bF^\eps_{\bv}(t_{n-1},\by_\eps^{n-1},\eps\,\bv_\eps^{n-1},\eps\,\tbv_\eps^{n}),
\qquad n\geq 1,
\]
{for the correction to the (intermediate) updated velocity $\tbv^n_\eps$, and} 
\[
\bz^n_\eps
:=\bv^n_\eps
\,+\,\eps\,\bE^\perp(\htime_{n},\hy_\eps^{n}+\eps(\hv_\eps^{n})^\perp)
\,=\,\eps\bJ\bF^\eps_{\bv}(\htime_{n},\hy_\eps^{n},\eps\,\hv_\eps^{n},\eps\,\bv_\eps^{n})
\,,\qquad\qquad n\geq 1\,,
\]
{for the correction to the (final) updated velocity $\bv^n_\eps$. By employing the scheme \eqref{eq:lstab_stg1}--\eqref{eq:lstab_stg2-0}, with $\lambda=\Dt/\eps^2$, these definitions imply the following updates for $n\geq1$}
\begin{align*}
\tbz^{n+1}_\eps&-(\Id+\gamma\lambda\J)^{-1}\bz^{n}_\eps
=\eps\,(\Id+\gamma\lambda\J)^{-1}\,
\left(\bE^\perp(t_{n},\by_\eps^{n}+\eps(\bv_\eps^{n})^\perp)
-\bE^\perp(\htime_{n},\hy_\eps^{n}+\eps(\hv_\eps^{n})^\perp)\right)\\
\bz^{n+1}_\eps&-\bA_\lambda\bz^{n}_\eps
=-(1-\gamma)\lambda\bJ\,(\Id+\gamma\lambda\J)^{-1}\,
\left(\tbz^{n+1}_\eps-(\Id+\gamma\lambda\J)^{-1}\bz^{n}_\eps\right)\\
&\qquad\qquad\quad
+\eps\,(\Id+\gamma\lambda\J)^{-1}\,
\left(\bE^\perp(\htime_{n+1},\hy_\eps^{n+1}+\eps(\hv_\eps^{n+1})^\perp)
-\bE^\perp(\htime_{n},\hy_\eps^{n}+\eps(\hv_\eps^{n})^\perp)\right)
\end{align*}
and the initial values
\begin{align*}
\tbz^{1}_\eps&-(\Id+\gamma\lambda\J)^{-1}\bv^{0}_\eps
=\eps\,(\Id+\gamma\lambda\J)^{-1}\,
\bE^\perp(t_{0},\by_\eps^{0}+\eps(\bv_\eps^{0})^\perp)\\
\bz^{1}_\eps&-\bA_\lambda\bv^{0}_\eps
=-(1-\gamma)\lambda\bJ\,(\Id+\gamma\lambda\J)^{-1}\,
\left(\tbz^{1}_\eps-(\Id+\gamma\lambda\J)^{-1}\bv^{0}_\eps\right)
+\eps\,(\Id+\gamma\lambda\J)^{-1}\,
\bE^\perp(\htime_{1},\hy_\eps^{1}+\eps(\hv_\eps^{1})^\perp)\,.
\end{align*}

\begin{lemma}
\label{lem:disct_zbound_y2}
There exists a constant $C_0>0$ such that, when $\bE\in W^{1,\infty}$, for $n\geq 1$,
\mmn{
\|\tbv_\eps^n\|
&\ds\,\lesssim\,\|\tbz_\eps^n\|\,+\,K_0\,\eps\,,&
\|\bv_\eps^n\|
&\ds\,\lesssim\,\|\bz_\eps^n\|\,+\,K_0\,\eps\,,\\[0.55em]
\ds\|\tbz_\eps^n\|
&\ds\,\lesssim\,\min\left(1,\frac{\eps^2}{\Dt}\right)\left(\|\bv_\eps^{n-1}\|\,+\,K_0\,\eps\right)\,,&
\ds\|\hv_\eps^n\|
&\ds\,\lesssim\,\|\bv^{n-1}_\eps\|+K_0\,\eps\,,
}
and
\mmn{
\ds\|\bz_\eps^n\|
\ds\lesssim\,
e^{C_0K_{x}\,t_n}
\left(\|\bv^0_\eps\|+\eps\,K_0+\eps\,t^n(K_t+K_xK_0)\right)\,.
}
\end{lemma}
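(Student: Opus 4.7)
My plan is to prove the five inequalities in order of increasing difficulty.

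The first two, $\|\tbv_\eps^n\|\lesssim\|\tbz_\eps^n\|+K_0\eps$ and $\|\bv_\eps^n\|\lesssim\|\bz_\eps^n\|+K_0\eps$, are immediate from the defining relations $\tbz_\eps^n=\tbv_\eps^n+\eps\bE^\perp(t_{n-1},\bx_\eps^{n-1})$ and $\bz_\eps^n=\bv_\eps^n+\eps\bE^\perp(\htime_n,\hx_\eps^n)$ together with $\|\bE\|_{L^\infty}\leq K_0$. For the $\tbz_\eps^n$-bound I would rewrite the first stage \eqref{eq:lstab_stg1} as the $2\times 2$ linear system $(\Id+\gamma\lambda\J)\tbv_\eps^n=\bv_\eps^{n-1}+(\gamma\Dt/\eps)\bE(t_{n-1},\bx_\eps^{n-1})$ with $\lambda=\Dt/\eps^2$, then add $\eps(\Id+\gamma\lambda\J)\bE^\perp(t_{n-1},\bx_\eps^{n-1})$ on both sides: the identities $\J\bE=\bE^\perp$ and $\J^2=-\Id$ collapse the right-hand side to $\bv_\eps^{n-1}+\eps\bE^\perp(t_{n-1},\bx_\eps^{n-1})$, producing $\tbz_\eps^n=(\Id+\gamma\lambda\J)^{-1}(\bv_\eps^{n-1}+\eps\bE^\perp)$. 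Lemma~\ref{l:1st-J}, applied with $\gamma\lambda$ in place of $\lambda$, then yields $\|(\Id+\gamma\lambda\J)^{-1}\|=1/\sqrt{1+\gamma^2\lambda^2}\leq\min(1,1/(\gamma\lambda))\lesssim\min(1,\eps^2/\Dt)$, whence the third estimate. For $\hv_\eps^n$, the key observation is that the $\hv$-update in \eqref{eq:lstab_stg2-1} has the same right-hand side as \eqref{eq:lstab_stg1} — only the time-step coefficient differs — so that $\hv_\eps^n-\bv_\eps^{n-1}=\tfrac{1}{2\gamma^2}(\tbv_\eps^n-\bv_\eps^{n-1})$, which combined with $\|\tbv_\eps^n\|\lesssim\|\bv_\eps^{n-1}\|+\eps K_0$ closes the fourth inequality.

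The last and main bound proceeds via the recursion $\bz_\eps^{n+1}=\bA_\lambda\bz_\eps^n+R_n$ printed above the statement, whose stability input is Lemma~\ref{l:2nd-J} giving $\|\bA_\lambda\|\leq 1$. The remainder $R_n$ splits in two: the first piece $-(1-\gamma)\lambda\J(\Id+\gamma\lambda\J)^{-1}$ acting on $\tbz_\eps^{n+1}-(\Id+\gamma\lambda\J)^{-1}\bz_\eps^n=\eps(\Id+\gamma\lambda\J)^{-1}(\bE^\perp(t_n,\bx_\eps^n)-\bE^\perp(\htime_n,\hx_\eps^n))$, controlled through $\|\lambda(\Id+\gamma\lambda\J)^{-1}\|\leq 1/\gamma$ and Lipschitz continuity of $\bE^\perp$; the second piece $\eps(\Id+\gamma\lambda\J)^{-1}(\bE^\perp(\htime_{n+1},\hx_\eps^{n+1})-\bE^\perp(\htime_n,\hx_\eps^n))$, handled in the same fashion. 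After substituting the explicit formulas $\by_\eps^n-\hy_\eps^n=\Dt\,\bE_A^\perp-\gamma\Dt\,\bE_B^\perp$ and $\bv_\eps^n-\hv_\eps^n=\lambda\J(\tbz_\eps^n-\gamma\bz_\eps^n)$ (both obtained from the scheme together with the algebraic identity $2\gamma^2-4\gamma+1=0$ satisfied by $\gamma$), and using the first three bounds of the lemma, one arrives at $\|R_n\|\lesssim\eps\Dt(K_t+K_xK_0)+\Dt K_x\|\bz_\eps^n\|$. Combined with $\|\bA_\lambda\|\leq 1$, this yields $\|\bz_\eps^{n+1}\|\leq(1+C_0 K_x\Dt)\|\bz_\eps^n\|+C_0\eps\Dt(K_t+K_xK_0)$ for $n\geq 1$. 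A direct inspection of the initial formulas gives $\|\bz_\eps^1\|\lesssim\|\bv_\eps^0\|+\eps K_0$, and the discrete Gr\"onwall lemma then produces the claimed exponential estimate.

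The main obstacle is establishing that $\|R_n\|$ carries no hidden $1/\eps$-loss from the stiffness of the scheme. A direct naive bound of velocity increments $\bv_\eps^n-\bv_\eps^{n-1}$ or $\hv_\eps^{n+1}-\hv_\eps^n$ would blow up as $\eps\to 0$ because $\bF_\bv^\eps$ contains a $1/\eps^2$ term, but rewriting them through $\tbz_\eps^n$ and $\bz_\eps^n$ reveals the delicate cancellations (witness the use of $2\gamma^2-4\gamma+1=0$ above): the singular $1/\eps$-parts fold back into Lipschitz differences of $\bE^\perp$ and appear eventually as the benign $K_x\Dt\|\bz_\eps^n\|$ term, which is absorbed by Gr\"onwall into the exponential factor $e^{C_0 K_x t_n}$ rather than deteriorating the estimate. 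This discrete cancellation is exactly the analogue of the continuous reason for introducing $\bz_\eps$ in \eqref{def:z}.
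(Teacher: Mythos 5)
The first four bounds are handled correctly and in essentially the same spirit as the paper's (terse) treatment, and the explicit identities you extract are valid: the rewriting $\tbz_\eps^n=(\Id+\gamma\lambda\J)^{-1}(\bv_\eps^{n-1}+\eps\bE^\perp(t_{n-1},\bx_\eps^{n-1}))$ via the cancellation $\frac{\gamma\Dt}{\eps}\bE-\eps\gamma\lambda\bE=0$, the proportionality $\hv_\eps^n-\bv_\eps^{n-1}=\tfrac{1}{2\gamma^2}(\tbv_\eps^n-\bv_\eps^{n-1})$, and the formulas $\by_\eps^n-\hy_\eps^n=\Dt\,\bE_A^\perp-\gamma\Dt\,\bE_B^\perp$, $\bv_\eps^n-\hv_\eps^n=\lambda\J(\tbz_\eps^n-\gamma\bz_\eps^n)$ (using $1-\gamma-\tfrac{1}{2\gamma}=-1$ from $2\gamma^2-4\gamma+1=0$) all check out, and reproduce the paper's intermediate increment bounds.

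The gap is in the claimed one-step recursion. Tracking your own formula $\bv_\eps^n-\hv_\eps^n=\lambda\J(\tbz_\eps^n-\gamma\bz_\eps^n)$ through the Lipschitz estimate for the $\bE^\perp$-differences, the remainder satisfies
\[
\|R_n\|\,\lesssim\,\eps\Dt\,(K_t+K_xK_0)\,+\,K_x\Dt\,\bigl(\|\bz_\eps^n\|+\|\tbz_\eps^n\|\bigr),
\]
and the $\|\tbz_\eps^n\|$ term is genuinely present --- it cannot be folded into the $\|\bz_\eps^n\|$ term. Since $\|\tbz_\eps^n\|\lesssim\min(1,\eps^2/\Dt)\bigl(\|\bz_\eps^{n-1}\|+K_0\eps\bigr)$, one is left with a \emph{two-step} recurrence involving both $\|\bz_\eps^n\|$ and $\|\bz_\eps^{n-1}\|$, not the one-step recurrence $\|\bz_\eps^{n+1}\|\leq(1+C_0K_x\Dt)\|\bz_\eps^n\|+C_0\eps\Dt(K_t+K_xK_0)$ that you assert. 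The paper closes exactly this issue by pairing the $\bz$- and $\tbz$-recursions into a Lyapunov-type quantity $\|\bz_\eps^n\|+c_0K_x\Dt\,\|\tbz_\eps^n\|$, which does satisfy a genuine one-step inequality and is then handed to the discrete Gr\"onwall lemma. You could alternatively close the argument by applying Gr\"onwall to $w_n:=\max\bigl(\|\bz_\eps^n\|,\|\bz_\eps^{n-1}\|\bigr)$; either fix is short, but as written the derivation of the exponential bound is incomplete because the coupling with $\tbz_\eps^n$ is dropped without justification.
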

\begin{proof}
It follows from \eqref{eq:lstab_stg2-1}--\eqref{eq:lstab_stg2-0} that for $n\geq1$,
\[
\|\by^n_\eps-\hy^{n}_\eps\|\lesssim\,K_0\Dt\,,\qquad\qquad
\|\hy^{n+1}_\eps-\hy^{n}_\eps\|\lesssim\,K_0\Dt\,,\qquad\qquad
\|\bv^n_\eps-\hv^{n}_\eps\|\lesssim\,\frac{\Dt}{\eps^2}
\left(\|\bz^n_\eps\|+\|\tbz^n_\eps\|\right)\,,
\]
and
\[
\|\hv^{n+1}_\eps-\hv^{n}_\eps\|\lesssim\,\frac{\Dt}{\eps^2}
\left(\|\bz^n_\eps\|+\|\tbz^n_\eps\|+\|\tbz^{n+1}_\eps\|\right)
\lesssim\,\frac{\Dt}{\eps^2}
\left(\|\bz^n_\eps\|+\|\tbz^n_\eps\|+K_0\eps\right)\,.
\]
Thus for some $c_0>0$, for $n\geq1$,
\[\begin{array}{rl}
\ds\|\tbz^{n+1}_\eps\|
\ds-\|\bz^n_\eps\|
&\lesssim 
K_x\Dt\,(\|\bz^n_\eps\|+\|\tbz^n_\eps\|)+\eps\Dt (K_t+K_xK_0)\,,\\[0.5em]
\ds\|\bz^{n+1}_\eps\|
\ds-\|\bz^n_\eps\|
-c_0\,K_x\Dt\|\tbz^n_\eps\|
&\lesssim 
K_x\Dt\,\|\bz^n_\eps\|+\eps\Dt (K_t+K_xK_0)\,,
\end{array}\]
that may be combined to give for $n\geq1$,
\begin{align*}
(\|\bz^{n+1}_\eps\|+c_0K_x\Dt\|\tbz^{n+1}_\eps\|)
&-(\|\bz^n_\eps\|+c_0K_x\Dt\|\tbz^n_\eps\|)\\
&\lesssim\,K_x\Dt(1+K_x\Dt)(\|\bz^n_\eps\|+c_0K_x\Dt\|\tbz^n_\eps\|)
+\eps\Dt (1+K_x\Dt)(K_t+K_xK_0)\,.
\end{align*}
At this stage completing an application of the discrete Gr\"onwall lemma with the initial bounds 
\[
\|\tbz^1_\eps\|\lesssim \|\bv^0_\eps\|+K_0\eps\,,\qquad
\|\bz^1_\eps\|\lesssim \|\bv^0_\eps\|+K_0\eps\,.
\]
achieves the proofs of the bound on $\bz^n_\eps$.

Bounds on $\bv^n_\eps$ and $\tbv^n_\eps$ are obvious from the definitions, bounds on $\tbz^n_\eps$ and $\hv^n_\eps$ follow from \eqref{eq:lstab_stg1}-\eqref{eq:lstab_stg2-1} and Lemma~\ref{l:1st-J}.
\end{proof}

{Now, to state a comparison result we introduce notation $\bX^{\Dt}$ and $(\bX_\eps^{\Dt},\bV_\eps^{\Dt})$ to denote discrete flows for \eqref{e:dNgyro-2nd_1}-\eqref{e:dNgyro-2nd} and \eqref{eq:lstab_stg1}--\eqref{eq:lstab_stg2-0}.} We also set $\bY_\eps^{\Dt}:=\bX_\eps^{\Dt}-\eps\,(\bV_\eps^{\Dt})^\perp$.

\begin{proposition} 
\label{prop:disct_err_2nd}
\begin{enumerate}[(i)]
\item
There exists a constant $C_0>0$ such that when $\bE\in W^{1,\infty}$
\begin{align*}\ds
\|\bX_\eps^{\Dt}(t_n,0,\bx_\eps^0,\bv_\eps^0)\ds-\bX^{\Dt}(t_n,0,\bx_\eps^0)\|\ds\underset{{\scriptscriptstyle {\bE}}}{\lesssim}
\eps\,e^{C_0K_{x}\,t_n}
(1+t^n)\left(\|\bv^0_\eps\|+\eps\right)
\,.
\end{align*}
\item 
There exists a constant $C_0>0$ such that when $\bE\in W^{2,\infty}$
\begin{align*}
\|\bY_\eps^{\Dt}(t_n,0,\bx_\eps^0,\bv_\eps^0)
-\,\bX^{\Dt}(t_n,0,\bx_\eps^0-\eps(\bv_\eps^0)^\perp)\|
\ds\underset{{\scriptscriptstyle {\bE}}}{\lesssim}
\eps^2\,e^{C_0K_{x}\,t_n}\,\left(1+t_n\right)^3
\,\big(1+\|\bv_\eps^0\|^2+\eps^2\big)\,.
\end{align*}
\end{enumerate}
\end{proposition}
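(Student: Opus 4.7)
The strategy parallels the asymptotic analysis of Theorem~\ref{thm:second_estim}(ii) at the continuous level and of Proposition~\ref{prop:disct_err_y} in the first-order case, with the extra bookkeeping demanded by the three coupled velocity stages $(\tbv_\eps^{n+1},\hv_\eps^{n+1},\bv_\eps^{n+1})$ of \eqref{eq:lstab_stg1}--\eqref{eq:lstab_stg2-0}. Throughout, the $\eps$-uniform velocity bounds are provided by Lemma~\ref{lem:disct_zbound_y2}, which controls all three quantities uniformly in $\eps$.

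For part (i), set $\bx^n:=\bX^{\Dt}(t_n,0,\bx_\eps^0)$, use $\bx_\eps^n=\by_\eps^n+\eps(\bv_\eps^n)^\perp$, and subtract the final $\by$-update in \eqref{eq:lstab_stg2-0} from \eqref{e:dNgyro-2nd}, coupled with the analogous subtraction of the intermediate stage \eqref{eq:lstab_stg2-1} from \eqref{e:dNgyro-2nd_1}. Each difference of $\bE^\perp$ terms is Lipschitz-controlled by $K_x$ times $\|\bx_\eps^\ell-\bx^\ell\|$ plus $\eps$ times one of $\|\tbv_\eps^{\ell+1}\|$, $\|\hv_\eps^{\ell+1}\|$, or $\|\bv_\eps^{\ell+1}\|$, all of which are uniformly bounded by Lemma~\ref{lem:disct_zbound_y2}. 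The resulting recursion $\|\bx_\eps^{n+1}-\bx^{n+1}\|\leq(1+C_0K_x\Dt)\|\bx_\eps^n-\bx^n\|+C_0\eps\,\Dt\,(\cdots)$ is closed by the discrete Gr\"onwall lemma \eqref{eq:dgronwall1}--\eqref{eq:dgronwall2}, yielding the claimed bound.

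For part (ii), which is the main obstacle, set $\by^n:=\bX^{\Dt}(t_n,0,\bx_\eps^0-\eps(\bv_\eps^0)^\perp)$ and aim for $\|\by_\eps^n-\by^n\|=\cO(\eps^2)$. I would apply a second-order Taylor expansion to each $\bE^\perp$ term in the final $\by$-update of \eqref{eq:lstab_stg2-0},
\[
\bE^\perp(t_n,\by_\eps^n+\eps(\tbv_\eps^{n+1})^\perp)=\bE^\perp(t_n,\by_\eps^n)+\eps\,\dD_\bx\bE^\perp(t_n,\by_\eps^n)(\tbv_\eps^{n+1})^\perp+\eps^2\,\tTheta_\eps^n,
\]
and similarly at $(\htime_{n+1},\hy_\eps^{n+1})$ with $\bv_\eps^{n+1}$, with remainders bounded by $\tfrac12 K_{xx}\|\bv\|^2$. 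The apparent $\cO(\eps)$ linear-in-$\bv$ terms must be upgraded to $\cO(\eps^2)$: from \eqref{eq:lstab_stg1},
\[
(\tbv_\eps^{n+1})^\perp=\eps\,\bE(t_n,\bx_\eps^n)-\frac{\eps^2}{\gamma\Dt}(\tbv_\eps^{n+1}-\bv_\eps^n),
\]
while the final velocity update in \eqref{eq:lstab_stg2-0} yields the analogous identity for the weighted combination $(1-\gamma)(\tbv_\eps^{n+1})^\perp+\gamma(\bv_\eps^{n+1})^\perp$, with an $\cO(\eps^2(\bv_\eps^{n+1}-\bv_\eps^n)/\Dt)$ remainder.

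The resulting terms of the form $\eps^2\,\dD_\bx\bE^\perp(\cdot)(\bv_\eps^{n+1}-\bv_\eps^n)/\Dt$ are handled by a discrete summation-by-parts (Abel transform), mirroring the complete-time-derivative trick of \S\ref{sec:2.2}: the boundary contributions telescope in $n$ and are $\cO(\eps^3)$ after inserting Lemma~\ref{lem:disct_zbound_y2}, while the internal commutator involves $\dD_\bx\bE^\perp(\htime_{n+1},\hy_\eps^{n+1})-\dD_\bx\bE^\perp(t_n,\by_\eps^n)=\cO(\Dt)$ thanks to the $\dD_\bx^2\bE$-bound. After this rearrangement, the equation satisfied by $(\by_\eps^n)_{n\geq 0}$ is an $\cO(\eps^2)$-perturbation of \eqref{e:dNgyro-2nd} applied to $(\by^n)_{n\geq 0}$, and the discrete Gr\"onwall lemma concludes. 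The $(1+t_n)^3$ polynomial factor arises from combining the linear-in-$t_n$ velocity bound of Lemma~\ref{lem:disct_zbound_y2} with the two time integrations hidden in the Gr\"onwall step. The chief technical subtlety is the careful bookkeeping of the three coupled velocity variables: one must verify that, when assembled, the Runge--Kutta weights $(1-\gamma,\gamma)$, together with the specific choice $\gamma=1-1/\sqrt{2}$ already exploited in Lemma~\ref{l:2nd-J}, preserve the cancellation of leading oscillatory corrections by second-order consistency, just as in the continuous proof.
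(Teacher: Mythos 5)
Your plan follows the same strategy as the paper's proof. For part (i), you correctly identify the nested recursion that couples $\|\bx_\eps^n-\bx^n\|$ with $\|\hx_\eps^n-\hx^n\|$ via Lipschitz control of the $\bE^\perp$-differences, with the extra $\eps$-sized velocity terms handled by Lemma~\ref{lem:disct_zbound_y2} and the recursion closed by discrete Gr\"onwall; this is exactly what the paper does. For part (ii), your three ingredients---second-order Taylor expansion with remainder $\cO(\eps^2\|\bv\|^2)$, upgrading the $\cO(\eps)$ linear-in-$\bv$ terms to $\cO(\eps^2)$ by substituting the velocity-update identities (especially the one for the weighted combination $(1-\gamma)\tbv^{\,\perp}+\gamma\bv^\perp$), and absorbing the resulting $(\bv^{n+1}_\eps-\bv^n_\eps)/\Dt$ terms by a discrete Abel/summation-by-parts argument with the $\dD_\bx^2\bE$-bound controlling the commutator---are precisely the three moves in the paper's proof. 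The one small organizational difference is that the paper first regroups both Jacobian factors at the common evaluation point $(\htime_{n+1},\hy_\eps^{n+1})$ and treats the resulting $(1-\gamma)$-weighted difference of $\dD_\bx\bE^\perp$ as an explicit $\cO(\eps^2)$-term via the bound $\|\tbv^{n+1}_\eps\|\lesssim K_0\eps+\tfrac{\eps^2}{\Dt}\|\bv^n_\eps\|$, whereas you describe substituting in each stage separately; both variants lead to the same estimate. Also note that, after the substitution, the $\dD_\bx\bE^\perp\cdot(\bv^{n+1}_\eps-\bv^n_\eps)/\Dt$ term actually carries a prefactor $\eps^3$ (not $\eps^2$ as written at one point in your sketch), which is consistent with the $\cO(\eps^3)$ boundary contributions you correctly claim two sentences later.
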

\begin{proof}
Along the proof we use the notational conventions introduced in the proof of Proposition~\ref{prop:disct_err} and variations thereof. 

As before, concerning the first estimate, we sum differences between respective equations and observe that for $n\geq0$
\[
\|\bx_\eps^n-\bx^n\|
\lesssim \eps\,[\|\bv^{n}_\eps\|+\|\bv^{0}_\eps\|]
+K_x\Dt\sum_{\ell=0}^{n-1}\|\bx_\eps^\ell-\bx^\ell\|
+K_x\Dt\sum_{\ell=1}^{n}\|\hx_\eps^\ell-\hx^\ell\|
\]
where for $n\geq1$, $\hx_\eps^n:=\hy_\eps^n+ \eps(\hv_\eps^n)^\perp$. Now,  from
\eqref{eq:lstab_stg1} and \eqref{e:dNgyro-2nd_1} follows for $n\geq1$
\[
\|\hx_\eps^n-\hx^n\|\lesssim
\|\bx_\eps^{n-1}-\bx^{n-1}\|\left(1+K_x\Dt\right)
+\eps\,[\|\hv^{n}_\eps\|+\|\bv^{n-1}_\eps\|]
+K_x\eps\Dt[\|\tbv^{n}_\eps\|+\|\bv^{n-1}_\eps\|]\,.
\]
Hence for $n\geq 0$
\begin{align*}\ds
\|\bx_\eps^{n}-\bx^{n}\|
\ds\,&\lesssim 
K_x\Dt\left(1+K_x\Dt\right)\sum_{\ell=0}^{n-1}\|\bx_\eps^{\ell}-\bx^{\ell}\|\\
&+\eps\,(\|\bv_\eps^n\|+\|\bv_\eps^0\|)
+\eps\,K_x\Dt\left(1+K_x\Dt\right)\sum_{\ell=1}^{n}(\|\hv^{\ell}_\eps\|+\|\tbv^{\ell}_\eps\|+\|\bv^{\ell-1}_\eps\|)\,.
\end{align*}
{Finally, applying the discrete Gr\"onwall lemma \eqref{eq:dgronwall1}--\eqref{eq:dgronwall2}, combined with the velocity bound of Lemma~\ref{lem:disct_zbound_y2} concludes the proof of the first inequality.}

{For the second estimate, one begins with the Taylor expansion of the $\by_\eps$-update \eqref{eq:lstab_stg2-0}, that is for $n\geq 0$,}
\begin{align*}\ds
\frac{\by_\eps^{n+1}-\by_\eps^n}{\Dt}
&\ds
+\,(1-\gamma)\bE^\perp(t_n,\by_\eps^n)+\,\gamma\,\bE^\perp(\htime_{n+1},\hy_\eps^{n+1})\\
&\ds
=\,-\,\eps\,\dD_x\bE^\perp(\htime_{n+1},\hy_\eps^{n+1})\left(
\gamma(\bv_\eps^{n+1})^\perp+(1-\gamma)(\tbv_\eps^{n+1})^\perp\right)\\
&\quad\,-\,\eps\,(1-\gamma)\,
\left(\dD_x\bE^\perp(t_n,\by_\eps^n)-\dD_x\bE^\perp(\htime_{n+1},\hy_\eps^{n+1})\right)(\tbv_\eps^{n+1})^\perp
\\
&\ds
\quad+\eps^2\left((1-\gamma)\Theta_\eps(t_n,\by_\eps^n,\tbv_\eps^{n+1})
+\gamma\,\Theta_\eps(\htime_{n+1},\hy_\eps^{n+1},\bv_\eps^{n+1})
\right)\,,
\end{align*}
with $\Theta_\eps$ such that $\|\Theta_\eps(t,\by,\bv)\|\leq \tfrac12 K_{xx}\|\bv\|^2$. {To see that the third line also possesses an $\eps^2$-bound, on may combine Lemma~\ref{l:1st-J} that gives for any $n\geq 0$,}
$$
\|\tbv^{n+1}_\eps\|\,\lesssim\,K_0\eps+\frac{\eps^2}{\Dt}\|\bv^{n}_\eps\|
$$
with \eqref{eq:lstab_stg2-1} that yields
$$
\Big\|\dD_x\bE(t_n,\by_\eps^n)-\dD_x\bE(\htime_{n+1},\hy_\eps^{n+1})\Big\|
\lesssim \min\left(K_x,\Dt\,\left(K_{tx}+K_{xx}K_0\right)\right)\,.
$$
Therefore, one may focus on the second line. We observe that, when $n\geq 1$,
\begin{align*}\ds
\dD_x\bE(\htime_{n},\hy_\eps^{n})\left(
\gamma(\bv_\eps^{n})^\perp+(1-\gamma)(\tbv_\eps^{n})^\perp\right)\,=\,&\,-\frac{\eps^2}{\Dt}\left(\dD_x\bE(\htime_{n+1},\hy_\eps^{n+1})\bv_\eps^{n}-\dD_x\bE(\htime_{n},\hy_\eps^{n})\bv_\eps^{n-1}\right)\\
&\ds
+\eps\,\dD_{\bx}\bE(\htime_{n},\hy_\eps^{n})\,
\left(\gamma\bE(\htime_{n},\hy_\eps^{n})+(1-\gamma)\bE(t_{n-1},\bx_\eps^{n-1})\right)
\\
&\ds+\frac{\eps^2}{\Dt}\left(\dD_x\bE(\htime_{n+1},\hy_\eps^{n+1})-\dD_x\bE(\htime_{n},\hy_\eps^{n})\right)\,\bv_\eps^{n}\,,
\end{align*}
{with the following estimate concerning the last term}
\begin{align*}
\Big\|\dD_x\bE&\ds(\htime_{n+1},\hy_\eps^{n+1})-\dD_x\bE(\htime_{n},\hy_\eps^{n})\Big\|
\lesssim \Dt\,\left(K_{tx}+K_{xx}K_0\right)\,.
\end{align*}
As a consequence summing yields, for $n\geq 0$,
\begin{align*}
\|\by^{n}_\eps\ds-\by^{n}\|
&\lesssim
K_x\Dt\sum_{\ell=0}^{n-1}\|\by^{\ell}_\eps-\by^{\ell}\|
+K_x\Dt\sum_{\ell=1}^{n}\|\hy^{\ell}_\eps-\hy^{\ell}\|\\
&+\eps^3\,K_x\left(\|\bv^{n}_\eps\|+\|\bv^0_\eps\|\right)
+\eps^2\,t^n\,K_x\,K_0
+\eps^3\Dt\,\left(K_{tx}+K_{xx}K_0\right)\sum_{\ell=0}^{n}\|\bv^\ell_\eps\|\\
&+\eps^2\Dt\,K_{xx}\left(\sum_{\ell=0}^{n}\|\bv^\ell_\eps\|^2
+\sum_{\ell=1}^{n}\|\tbv^\ell_\eps\|^2\right)\,.
\end{align*}
Finally, we note that for $n\geq 1$
\[
\|\hy^{n}_\eps-\hy^{n}\|
\lesssim \|\by^{n-1}_\eps-\by^{n-1}\|\,(1+K_x\Dt)
+\eps^2\Dt\,K_x\,K_0+\eps^3\,K_x\,\|\bv^{n-1}_\eps\|\,.
\]

Inserting the latter in the former leaves, for $n\geq0$,
\begin{align*}
\|\by^{n}_\eps\ds-\by^{n}\|
&\lesssim
K_x\Dt(1+K_x\Dt)\sum_{\ell=0}^{n-1}\|\by^{\ell}_\eps-\by^{\ell}\|
+\eps^3\,K_x\left(\|\bv^{n}_\eps\|+\|\bv^0_\eps\|\right)\\
&
+\eps^2\,t^n\,K_x\,K_0\,(1+K_x\Dt)
+\eps^3\Dt\,\left(K_x^2+K_{tx}+K_{xx}K_0\right)\sum_{\ell=0}^{n}\|\bv^\ell_\eps\|\\
&+\eps^2\Dt\,K_{xx}\left(\sum_{\ell=0}^{n}\|\bv^\ell_\eps\|^2
+\sum_{\ell=1}^{n}\|\tbv^\ell_\eps\|^2\right)\,.
\end{align*}
The proof is again achieved by combining the discrete Gr\"onwall lemma with Lemma~\ref{lem:disct_zbound_y2}.
\end{proof}

\subsection{Proof of Theorem \ref{thm:error_estim_2nd}}
{The unique solvability of the scheme is again a direct consequence of Lemma~\ref{l:1st-J}. Regarding error estimates,  one gathers direct estimates in Proposition~\ref{prop:E1_y_2nd} with the combination of Propositions~\ref{prop:E1_gc_2nd} and~\ref{prop:disct_err_2nd} and Theorem \ref{thm:second_estim} to conclude the proof.}

\section{Numerical experiments}\label{sec:num_exp}
In this section, we provide an illustration of the error estimates proved in Theorems~\ref{thm:error_estim_1st_y} and~\ref{thm:error_estim_2nd} on the simple example of the motion of a single particle subject to an electric field $\bE=-\nabla_\bx \phi$ deriving from the potential
\mmn{
\phi(\bx)=\frac{1}{2}\left(\|\bx\|^2+\frac{1}{10\pi}\cos^2(2\pi x_2)\right),\qquad
\bx=(x_1,x_2)\,.
}
Initial conditions are chosen as
\mmn{
\bx^0_\eps=(1,1),\qquad \qquad
\bv^0_\eps=(3,3).
}

Note that for this electric potential, the electric potential does not fit exactly in the framework of our theorems since it is unbounded (though its derivatives from order one and onward are bounded). Yet our observations fit well with our theoretical conclusions.

We observe two error indicators for the variable $\by_\eps$,
\mmn{
\begin{cases}
\ds\mathcal{E}_{\by}(\Delta t,\eps) \,:=\, \sum_{n=1}^{N_T} \Dt\,\|\by_\eps^{n}-\by_\eps(t_n)\|,
\\[0.9em]
\ds\mathcal{E}_{\by,\,\rm{gc}}(\Delta t,\eps) \,:=\, \sum_{n=1}^{N_T} \Dt\,\|\by_\eps^{n}-\bX(t_n,0, \bx^0-\eps(\bv^0_\eps)^\perp) \|,
\end{cases}
}
where $\by_\eps:=\bx_\eps-\eps(\bv_\eps)^\perp$ stems from the solution $(\bx_\eps,\bv_\eps)$ of the system of characteristics \eqref{e:Newton} (with $s=0$) and $\bX$ is the flow for the guiding center equation \eqref{e:Ngyro}, whereas  $\by_\eps^n$ is our numerical approximation of $\by_\eps(t_n)$. {So, in other words, $\mathcal{E}_{\by}$ and $\mathcal{E}_{\by,\,\rm{gc}}$ measure the difference from the numerical approximation to, respectively, the exact $\eps$-dependent and asymptotic solutions, hence quantify respectively numerical convergence and asymptotic convergence.} Note that $\mathcal{E}_{\by}$ and $\mathcal{E}_{\by,\,\rm{gc}}$ are averaged errors in time so that they take into account the possibly-large errors which may originate from the initial layer. Similarly, we 
define errors for the velocity variable $\bv_\eps$ as
\mmn{
\begin{cases}
\ds\mathcal{E}_{\bv}(\Delta t,\eps) \,:=\, \sum_{n=1}^{N_T} \Dt\,\|\bv_\eps^{n}-\bv_\eps(t_n)\|,
\\[0.9em]
\ds\mathcal{E}_{\bv,\,\rm{gc}}(\Delta t,\eps) \,:=\, \sum_{n=1}^{N_T} \Dt\,\|\eps^{-1}\bv_\eps^{n}-\bv_{\rm gc}(t_n) \|,
\end{cases}
}
using the guiding center velocity $\bv_{\rm gc}(t):=-\bE^\perp(t,\bx_\eps(t))$ as the asymptotic velocity.

Since the exact solution of this example is not available, we perform a very accurate numerical simulation by a fourth-order explicit Runge--Kutta scheme as the reference solution. The time step for this reference solution is chosen small enough, namely of order $\eps^2$ when $\eps\ll 1$, so as to capture the very fast oscillations. 

We, then, perform some numerical experiments with the first-order scheme \eqref{e:dNewton-1st_y}  to illustrate the results stated in Theorems~\ref{thm:error_estim_1st_y}. 

In Fig.~\ref{fig:01}(A), we illustrate the computed errors for the first-order scheme \eqref{e:dNewton-1st_y}, which match the estimate of Theorem~\ref{thm:error_estim_1st_y}; one can identify in the figure, roughly speaking, the $min$ function of the estimate, \cf \cite[Fig.\ 1]{jin2010asymptotic}: for $\eps\sim 1$, the error grows as one decreases $\eps$ up to some turning point in the curve after which the error decreases with $\eps$ getting closer to zero. More precisely, one can observe that, when $\Dt$ is much smaller than $\eps$ (right part of Fig.\ \ref{fig:01}(A)), the error is $\cO(\Dt)$, \ie, the scheme is first-order accurate with respect to $\Dt$, as the classical analysis may suggest. Note that in this regime, the other error estimate in Theorem \ref{thm:error_estim_1st_y}, which behaves like $\eps^2+\Dt$, is quite large.  On the other hand, for smaller values of $\eps$, it is the latter bound which saturates the numerical error due to the blow-up of the classical error estimate, so the error is dominated by $\cO(\eps^2)$ as the slope of the error curve suggests. Indeed, we see this saturation in the intermediate region for $\eps\in[10^{-3},10^{-1}]$. Of course, when we refine the time step $\Dt$, the intermediate region moves to the left. Finally, when $\eps^2$ is very small compared to $\Dt$, the error in $\Dt$ dominates and the error does not decrease with $\eps$ any longer. 

Moreover, in Fig.\ \ref{fig:01}(B), we compare our numerical approximation with the resolved reference solution of the guiding center model $\bX(\cdot,0,\bx^0-\eps(\bv^0_\eps)^\perp$. This, in fact, confirms that when $\eps\ll 1$, the scheme is first-order accurate and corresponds to the left part of Fig.\ \ref{fig:01}(A), as one expects that the reference solution converges to its asymptotic limit.

We also present the numerical error on the velocity variable, in terms of $\mathcal{E}_{\bv}$ and $\mathcal{E}_{\bv,\,\rm{gc}}$. Fig.\ \ref{fig:01}(C) confirms the point that the scheme should be first-order with respect to $\Dt$, when $\eps\sim 1$. However, for $\eps\ll 1$ and a with large time step, the numerical scheme does not capture fast oscillations; so, no convergence to the reference velocity can be observed. In this regime, the computed velocity is only able to compute slow scale dynamics represented by the guiding center velocity $\bv_{\rm gc}$, as Fig.\ \ref{fig:01}(D) suggests.

 \begin{figure}[h!]
\begin{subfigure}[b]{0.45\textwidth}
\centering
 \includegraphics[width=\textwidth]{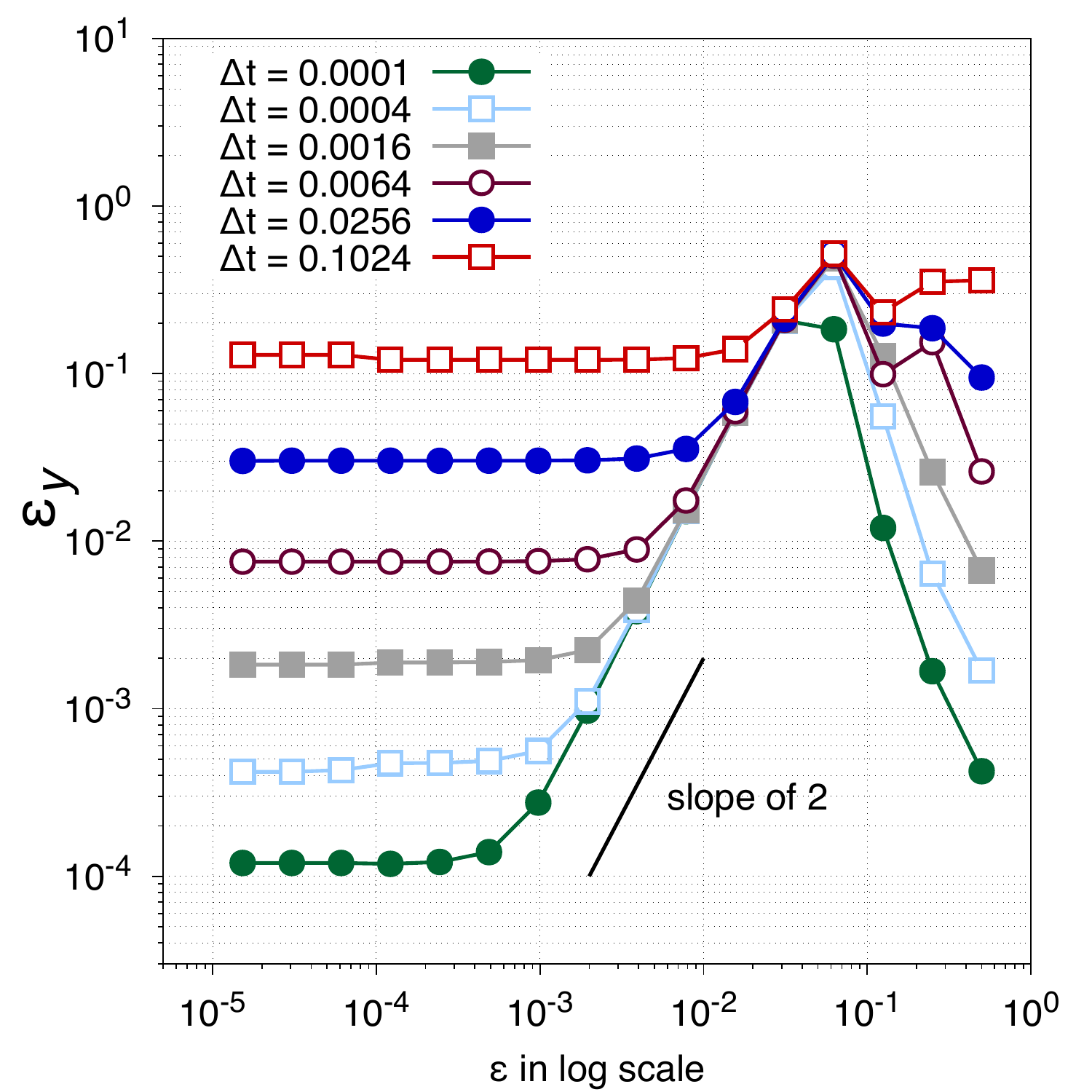}
\subcaption{}
\end{subfigure}
\begin{subfigure}[b]{0.45\textwidth}
\centering
 \includegraphics[width=\textwidth]{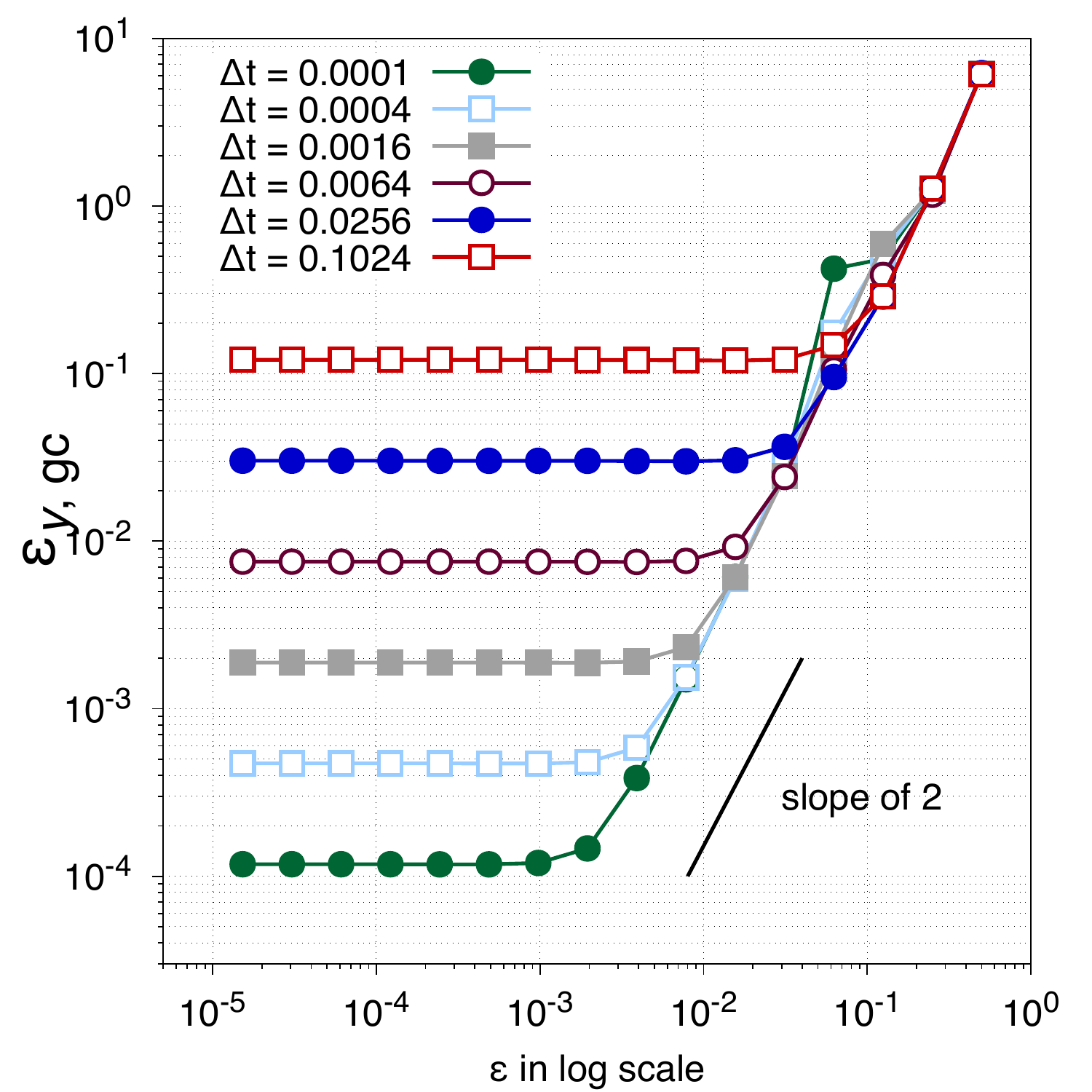}
\subcaption{}
\end{subfigure}
\begin{subfigure}[b]{0.45\textwidth}
\centering
 \includegraphics[width=\textwidth]{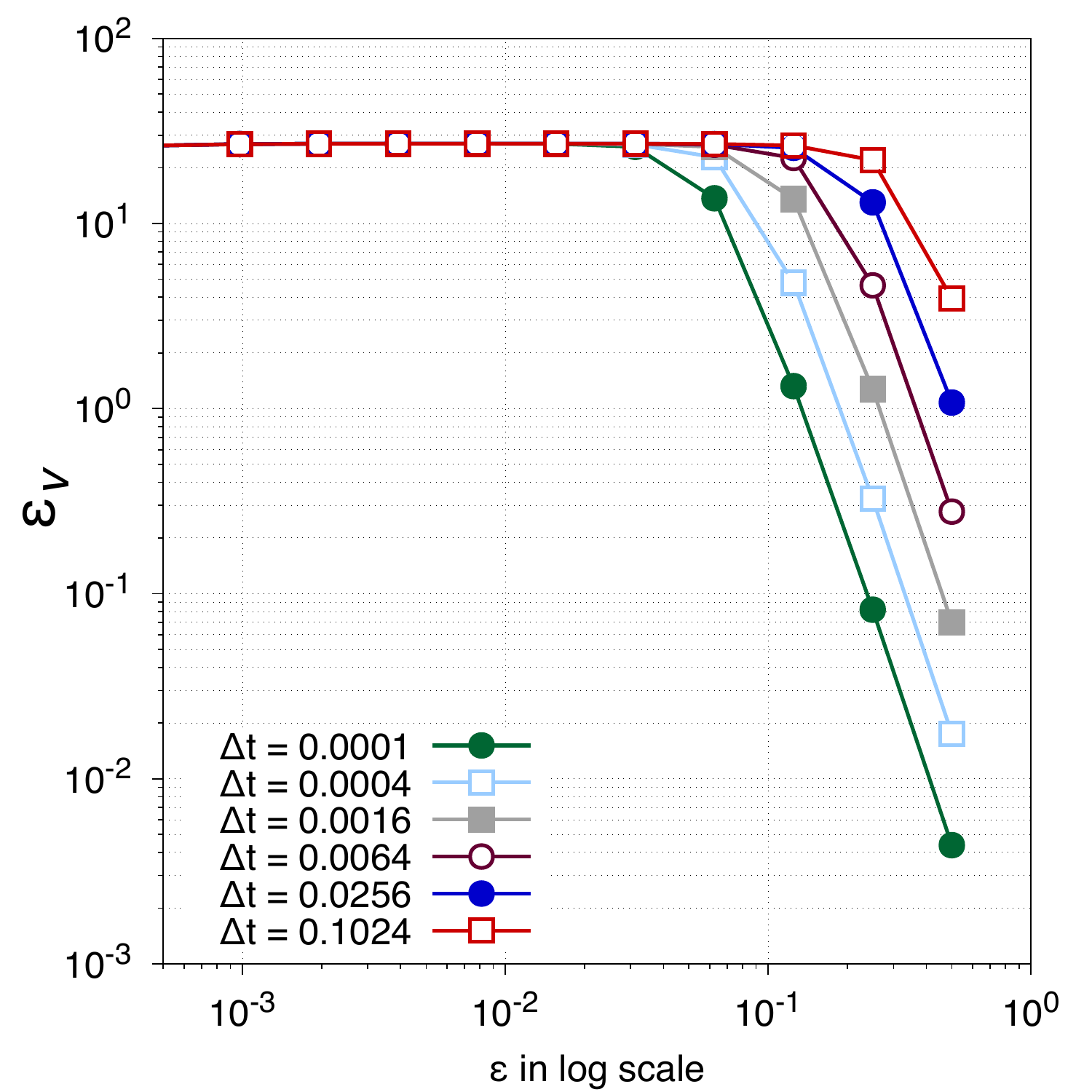}
\subcaption{}
\end{subfigure}
\begin{subfigure}[b]{0.45\textwidth}
\centering
 \includegraphics[width=\textwidth]{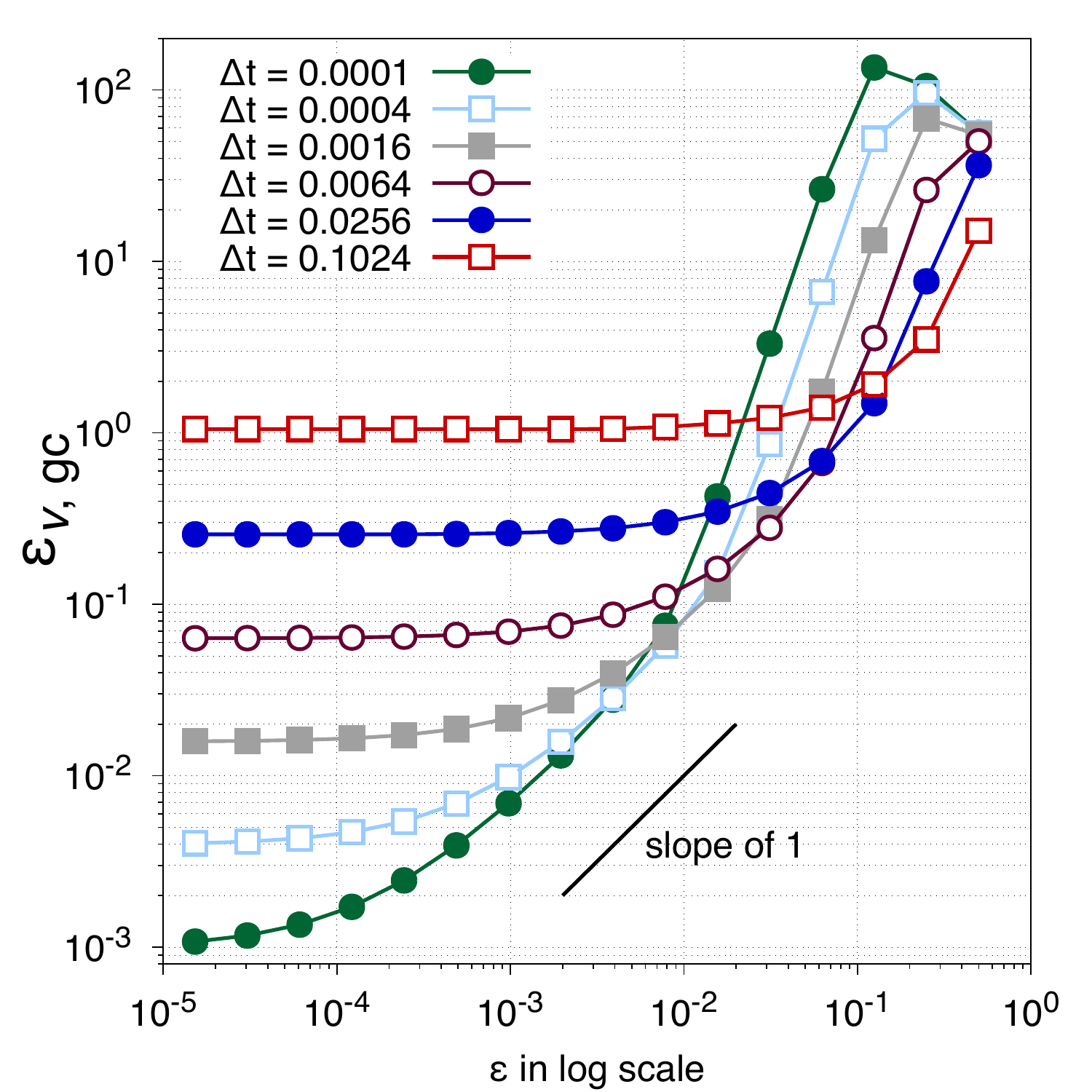}
\subcaption{}
\end{subfigure}
\caption{ {\bf First-order scheme \eqref{e:dNewton-1st_y}:} (A) Error
  between the approximation $\by_{\eps}^n$ and (reference)
  solution $\by_\eps(t_n)$ of \eqref{e:Newton} denoted by $\mathcal{E}_{\by}$.  (B)  Error
between the approximation $\by_{\eps}^n$ and the asymptotic
 (reference) solution $\bX(t_n,0,\bx^0-\eps(\bv^0)^\perp)$ of
  \eqref{e:Ngyro} denoted by $\mathcal{E}_{\by,\,\rm{gc}}$. (C) Error between the approximation $\bv_ \eps^n$ and the (reference)
  solution $\bv_\eps(t_n)$ of \eqref{e:Newton} denoted by $\mathcal{E}_{\bv}$.
    (D)  Error
  between the approximation $\eps^{-1}\bv_{\eps}^n$ and the asymptotic
  (reference) velocity $\bv_{\rm gc}(t_n)$ denoted by $\mathcal{E}_{\bv,\,\rm{gc}}$.}
\label{fig:01}
\end{figure}

Furthermore, we perform numerical experiments with the second-order
scheme \eqref{eq:lstab_stg} to illustrate Theorem~\ref{thm:error_estim_2nd}. The numerical results are shown in Fig.~\ref{fig:02}. We observe the same behavior of the numerical
error, but, of course, with a smaller error owing to the higher order of accuracy with respect to $\Dt$, though, again, there is an intermediate region where the dominant error term is $\cO(\eps^2)$. These numerical tests, with a smooth solution, underline the expected but clear advantage of the second-order scheme compared to the first-order approximation.  

\begin{figure}[h!]
\begin{subfigure}[b]{0.45\textwidth}
\centering
 \includegraphics[width=\textwidth]{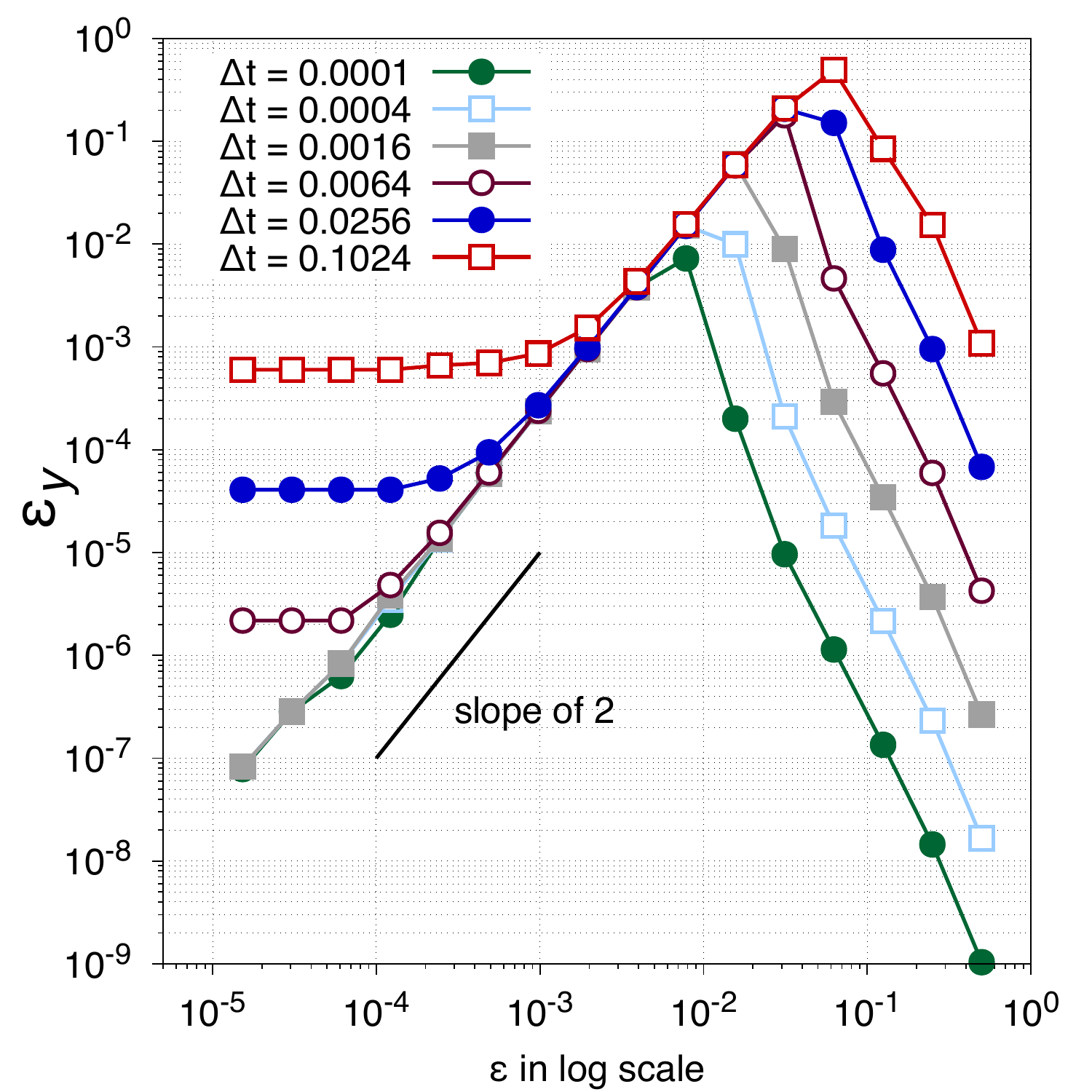}
\subcaption{}
\end{subfigure}
\begin{subfigure}[b]{0.45\textwidth}
\centering
 \includegraphics[width=\textwidth]{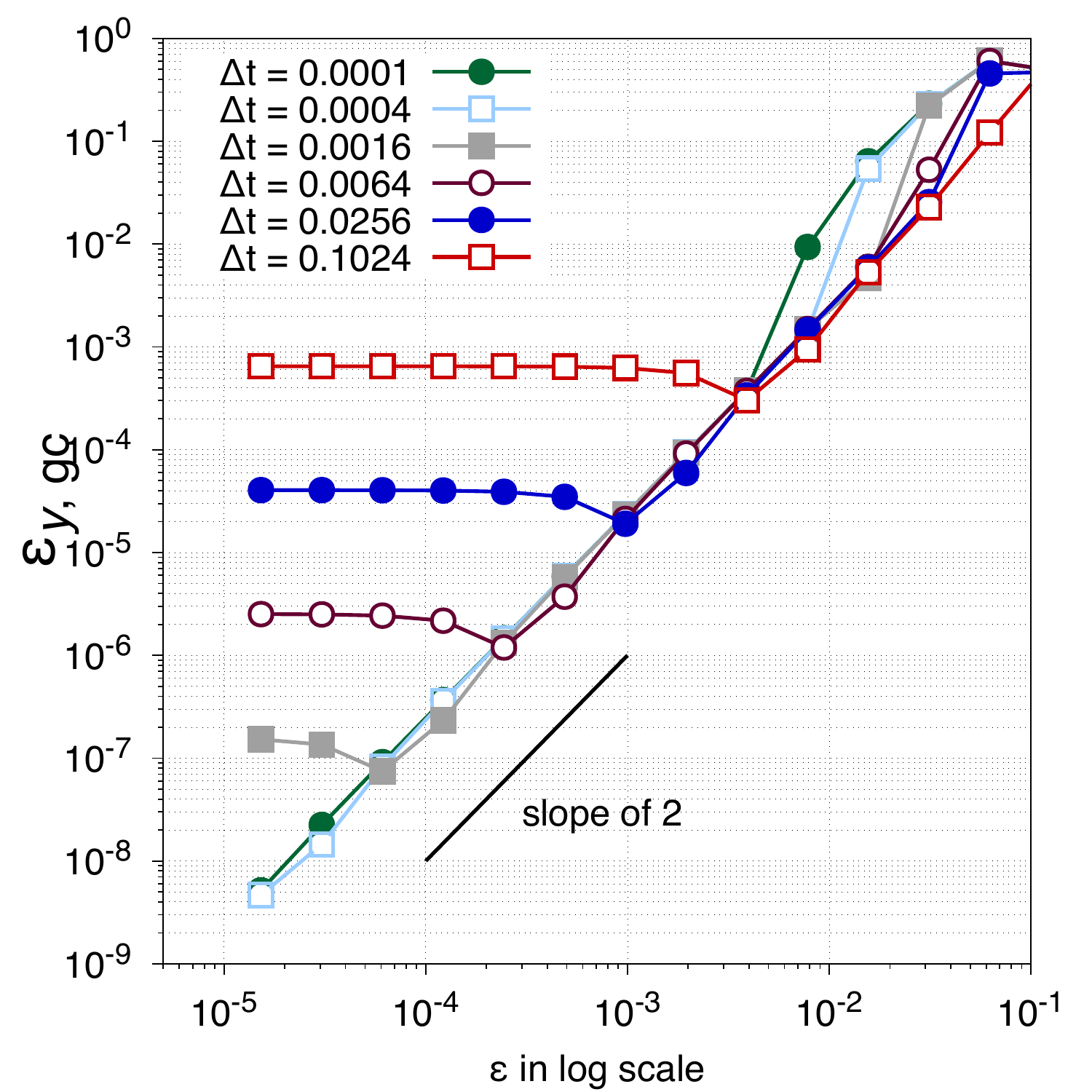}
\subcaption{}
\end{subfigure}
\begin{subfigure}[b]{0.45\textwidth}
\centering
 \includegraphics[width=\textwidth]{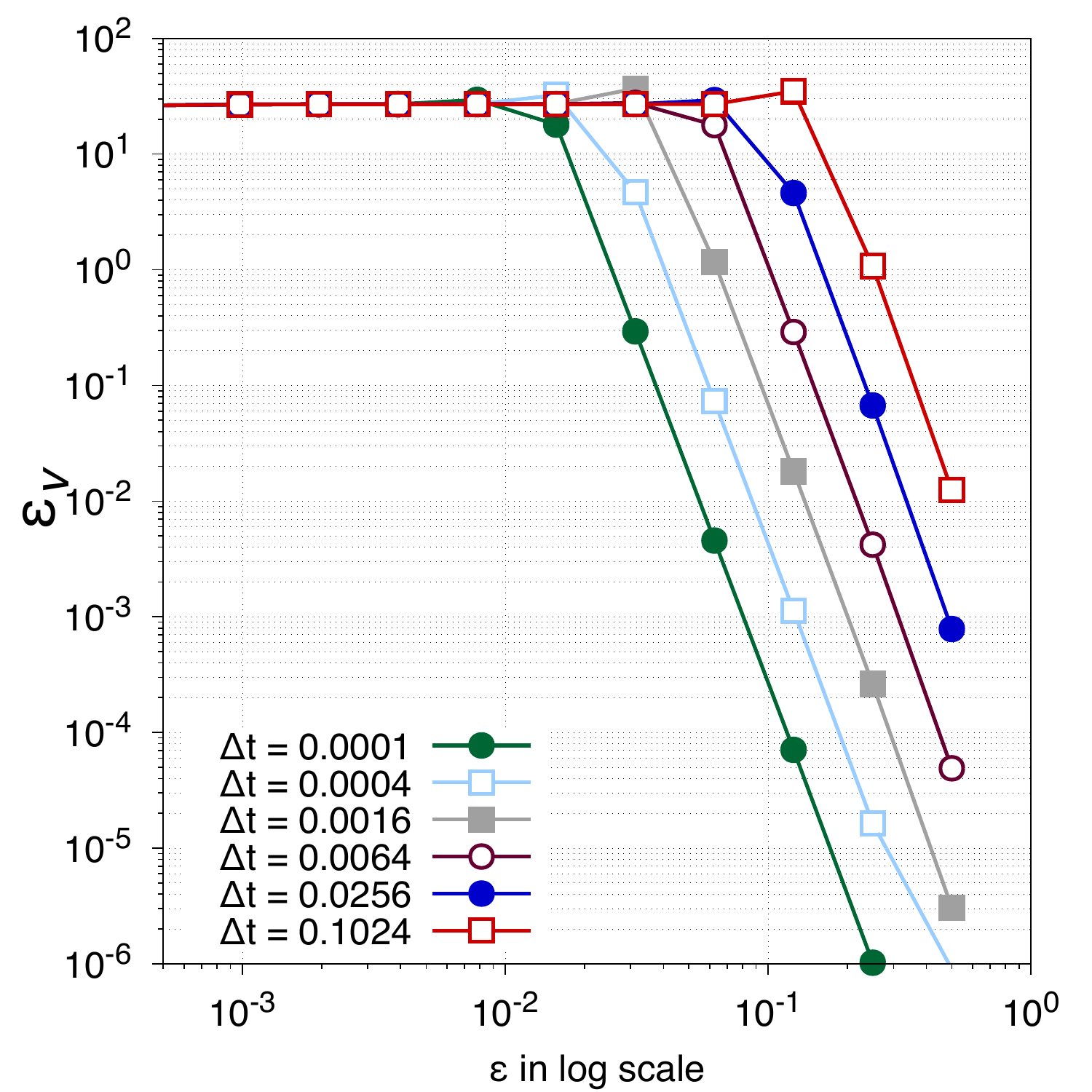}
\subcaption{}
\end{subfigure}
\begin{subfigure}[b]{0.45\textwidth}
\centering
 \includegraphics[width=\textwidth]{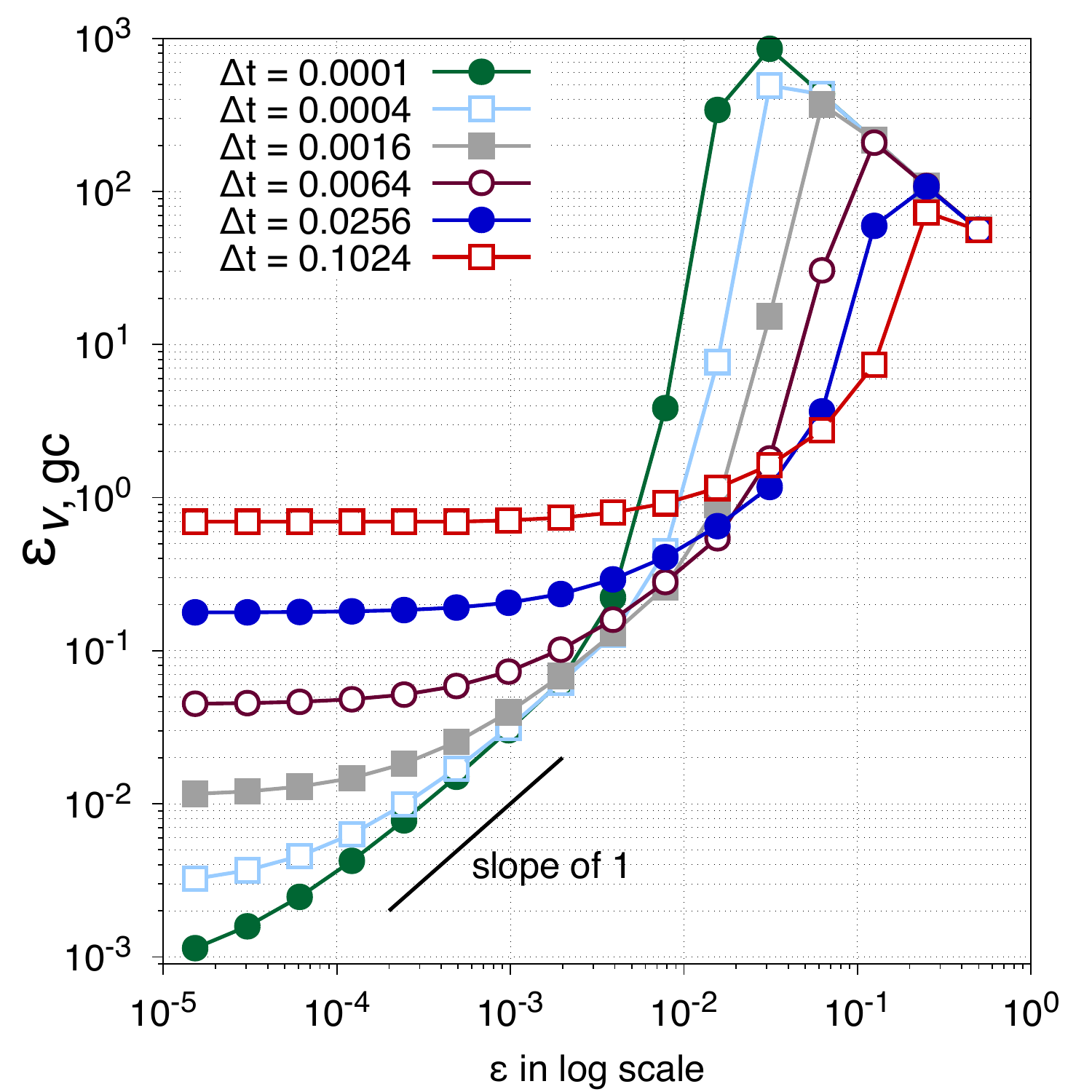}
\subcaption{}
\end{subfigure}
\caption{
{\bf Second-order scheme \eqref{eq:lstab_stg}:} (A) Error
  between the approximation $\by_{\eps}^n$ and (reference)
  solution $\by_\eps(t_n)$ of \eqref{e:Newton} denoted by $\mathcal{E}_{\by}$.  (B)  Error
between the approximation $\by_{\eps}^n$ and the asymptotic
 (reference) solution $\bX(t_n,0,\bx^0-\eps(\bv^0)^\perp)$ of
  \eqref{e:Ngyro} denoted by $\mathcal{E}_{\by,\,\rm{gc}}$. (C) Error between the approximation $\bv_ \eps^n$ and the (reference)
  solution $\bv_\eps(t_n)$ of \eqref{e:Newton} denoted by $\mathcal{E}_{\bv}$.
    (D)  Error
  between the approximation $\eps^{-1}\bv_{\eps}^n$ and the asymptotic
  (reference) velocity $\bv_{\rm gc}(t_n)$ denoted by $\mathcal{E}_{\bv,\,\rm{gc}}$..
   }
\label{fig:02}
\end{figure}

\section{Conclusion and perspectives}
In this paper, we have presented a complete convergence analysis of
particle-in-cell methods for the two-dimensional Vlasov equation, with a given electric field, submitted to an external magnetic field, which is homogeneous in space and time and very strong, of order $1/\eps$ with $\eps\ll 1$. In fact, we have estimated the error for semi-implicit first- and second-order IMEX schemes, and confirmed the stability, accuracy and convergence of these schemes, for any possible values of the time step and of the scaling parameter $\eps$. These theoretical results have been supported by numerical experiments.

An immediate extension is to investigate, in practice and analysis, the applicability of the presented framework for more complicated cases, \eg, for the three-dimensional system or with an inhomogeneous magnetic field. Another interesting extension could be to derive and analyze higher-order asymptotic models, to improve the error estimate in terms of $\eps$. This would be rather involved as higher order terms are coupled with the evolution of the energy; see \cite{FR17,filbet2018numerical} for instance.

\appendix
\section{Convergence analysis for oscillatory ODEs}\label{s:app}

As announced in the introduction, we conclude with abstract considerations on the numerical analysis of oscillatory ODEs. Though insufficient to prove the relevant results, these considerations provide enlightening insights supporting correct educated guesses on the final outcomes.

Let us discuss a system of the form 
\mm{\label{eq:abstract}
\begin{cases}
(\bfa_\eps+\eps^{r_t}\,\bG_t^\eps(\cdot,\bfa_\eps,\bfb_\eps))'(t)=\ds
\bF_\bfa^\eps(t,\bfa_\eps(t))+\eps^{r_x}\,\bG_x^\eps(t,\bfa_\eps(t),\bfb_\eps(t))\,,
\\[1em]
\ds
\bfb_\eps'(t)=-\frac{1}{\eps^2}\bJ\,\bfb_\eps(t)+\bF_\bfb^\eps(t,\bfa_\eps(t),\bfb_\eps(t))\,,
\end{cases}
}
(with $\bF_\bfa^\eps$, $\bF_\bfb^\eps$, $\bG_t^\eps$, $\bG_x^\epsilon$ uniformly smooth) and try to guess what may be expected on the numerical computation of the slow variable $\bfa_\eps$. Expanding the first equation of the system with the second suggests that, with such a goal in mind, a direct convergence analysis of a discretization of \eqref{eq:abstract} could be carried out by working with the vector $(\bfa_\eps,\eps^{\min(r_t-2,r_x)}\bfb_\eps)$, and, arguing recursively, that its $(m+1)$th derivative is bounded by a multiple of $\max(\eps^{-(2m-\min(r_t-2,r_x))_+},\eps^{-(2(m+1)-\min(r_t-2,r_x))})$. As a consequence, a direct convergence analysis of a scheme of order $m$ that would be unconditionally stable would result for the numerical approximation of $(\bfa_\eps,\eps^{\min(r_t-2,r_x)}\bfb_\eps)$, thus also of $\bfa_\eps$, into a bound on numerical error by a multiple of 
\[
(\Dt)^m\times\max\left(\frac{1}{\eps^{(2m-\min(r_t-2,r_x))_+}},\frac{1}{\eps^{(2(m+1)-\min(r_t-2,r_x))}}\right)\,.
\]
For concreteness note that when analyzing the computation of the guiding center, $r_t=3$ and $r_x=2$ so that the bound is $\Dt^m/\eps^{(2m+1)}$. The bound is somewhat optimal in the prediction of the computational error for $\eps^{\min(r_t-2,r_x)}\bfb_\eps$. With this respect note that even if by a particularly clever method, for instance through stroboscopic averaging, one is able to improve the computation of a fast variable at particularly well-chosen set of discrete times, this extra precision will be lost when recovering by interpolation from these discrete times an approximation of $\bfb_\eps$ on the whole continuous time interval.

System~\eqref{eq:abstract} suggests that the variable $\bfa_\eps$ is actually $\cO(\eps^{\min(r_t,r_x)})$-close as $\eps\to0$ to a solution $\bfa$ of the uncoupled non-stiff equation
\mm{\label{eq:limit-abstract}
\bfa_\eps'(t)=\ds
\bF_\bfa^\eps(t,\bfa(t))\,.
}
For a scheme of order $m$ consistent with the foregoing asymptotic and unconditionally stable this suggests a bound of the numerical error in the approximation of $\bfa_\eps$ by a multiple of
\[
\eps^{\min(r_t,r_x)}+\Dt^m\,.
\]
Note that to conclude to the latter bound it is sufficient to know that the $\bfa$-part of the solution of the discrete scheme for \eqref{eq:abstract} converge as $\eps\to0$ to a solution of a scheme of order $m$ for \eqref{eq:limit-abstract} with rate $\cO(\eps^{\min(r_t,r_x)}+\eps\,\Dt^{m\,\frac{\min(r_t,r_x)-1}{\min(r_t,r_x)}})$, leaving room for some depreciation of the continuous rate $\cO(\eps^{\min(r_t,r_x)})$.

This provides a final bound of the numerical error for the variable $\bfa_\eps$ by a multiple of 
\[
\min\left(\eps^{\min(r_t,r_x)}+\Dt^m,(\Dt)^m\times\max\left(\frac{1}{\eps^{(2m-\min(r_t-2,r_x))_+}},\frac{1}{\eps^{(2(m+1)-\min(r_t-2,r_x))}}\right)\right)\,.
\]
In the present paper, we have turned the foregoing formal discussion into rigorous convergence analysis for some of the schemes introduced in \cite{FR16} and a few extensions. 


\bibliographystyle{siam}
\bibliography{Ref} 

\end{document}